\newtheorem{rem}{Remark}[section]
\newtheorem{thm}{Theorem}[section]
\newtheorem{lem}{Lemma}[section]
\newtheorem{propo}{Proposition}[section]
\newtheorem{cor}{Corollary}[section]
\newtheorem{dfn}{Definition}[section]
\newtheorem{prop}{Property}[section]
\numberwithin{equation}{section}
\newcommand*{\R}{\mathbb{R}} % L'ensemble des réels
\newcommand*{\N}{\mathbb{N}} % L'ensemble des entiers
\newcommand*{\Z}{\mathbb{Z}} % L'ensemble des relatifs
\newcommand*{\C}{\mathbb{C}} % L'ensemble des complexes
\newcommand*{\PP}{\mathbb{P}}% Probabilité
\newcommand*{\bfP}{\mathbf{P}}% Probabilité
\newcommand*{\E}{\mathbb{E}}% Espérance
\newcommand*{\W}{\mathcal{W}}% espace bruit
\newcommand*{\X}{\mathcal{X}}% espace du processus X
\title{Rate of convergence to equilibrium for discrete-time stochastic dynamics with memory}
\author{Maylis Varvenne\thanks{Institut de Math\'ematiques de Toulouse, Universit\'e Paul Sabatier, 118 route de Narbonne, 31062 Toulouse, France. E-mail: maylis.varvenne@math.univ-toulouse.fr}}
\date\today}
\begin{document}

\setlength{\parindent}{0pt}
\maketitle

\begin{abstract}
The main objective of the paper is to study the long-time behavior of general discrete dynamics driven by an ergodic stationary Gaussian noise. In our main result, we prove existence and uniqueness of the invariant distribution and exhibit some upper-bounds on the rate of convergence to equilibrium in terms of the asymptotic behavior of the covariance function of the Gaussian noise (or equivalently to its moving average representation). Then, we apply our general results to fractional dynamics (including the Euler Scheme associated to fractional driven Stochastic Differential Equations). When the Hurst parameter $H$ belongs to $(0,1/2)$ we retrieve, with a slightly more explicit approach due to the discrete-time setting, the rate exhibited by Hairer in a continuous time setting \cite{hairer2005ergodicity}. In this fractional setting, we also emphasize the significant dependence of the rate of convergence to equilibrium on the local behaviour of the covariance function of the Gaussian noise.
\end{abstract}

\bigskip

\textit{Keywords:} Discrete stochastic dynamics; Rate of convergence to equilibrium; Stationary Gaussian noise; Total variation distance; Lyapunov function; Toeplitz operator.

\nocite{arnold2013random}

\section{Introduction}

Convergence to equilibrium for Stochastic dynamics is one of the most natural and most studied problems in probability theory. Regarding Markov processes, this topic has been deeply undertaken through various approaches: spectral analysis, functional inequalities or coupling methods. However, in many applications (Physics, Biology, Finance...) the future evolution  of a quantity may depend on its own history, and thus, noise with independent increments does not accurately reflect reality. A classical way to overcome this problem is to consider dynamical systems driven by processes with stationary increments like fractional Brownian motion (fBm) for instance which is widely used in applications (see e.g \cite{guasoni2006no,jeon2011vivo,kou2008stochastic,odde1996stochastic}). In a continuous time framework, Stochastic Differential Equations (SDEs) driven by Gaussian processes with stationary increments have been introduced to model random evolution phenomena with long range dependence properties. Consider SDEs of the following form
\begin{equation}\label{SDE_Gaussian}
dX_t=b(X_t)dt+\sigma(X_t)dZ_t
\end{equation}
where $(Z_t)_{t\geqslant0}$ is a Gaussian process with ergodic stationary increments and $\sigma:\R^d\to\mathcal{M}_d(\R)$, $b:\R^d\to\R^d$ are functions defined in a such a way that existence of a solution holds. The ergodic properties of such processes have been a topic of great interest over the last decade. For general Gaussian processes, existence and approximation of stationary solutions are provided in \cite{cohen2011approximation} in the additive case (i.e. when $\sigma$ is constant). The specific situation where $(Z_t)_{t\geqslant0}$ is a fractional Brownian motion has received significant attention since in the seminal paper \cite{hairer2005ergodicity} by Hairer, a definition of invariant distribution is given in the additive case through the embedding of the solution to an infinite-dimensional markovian structure. This point of view led to some probabilistic uniqueness criteria (on this topic, see $e.g.$ \cite{hairer2007ergodic,hairer2013regularity}) and to some coupling methods in view of the study of the rate of convergence to equilibrium. More precisely, in \cite{hairer2005ergodicity}, some coalescent coupling arguments are also developed and lead to the convergence of the process in total variation to the stationary regime with a rate upper-bounded by $C_\varepsilon t^{-(\alpha_H-\varepsilon)}$ for any $\varepsilon>0$, with 
%As concerns long-time behavior, different properties have been studied like existence and uniqueness of stationary solution (see \cite{hairer2005ergodicity,hairer2007ergodic,hairer2013regularity} for fractional SDEs for instance), approximation of stationary solution in \cite{cohen2011approximation, cohen2014approximation} or the rate of convergence to an equilibrium distribution. For this last property, the case where $(Z_t)_{t\geqslant0}$ is fractional Brownian motion has received significant attention from Hairer \cite{hairer2005ergodicity}, Fontbona and Panloup \cite{fontbona2017rate}, Deya, Panloup and Tindel \cite{deya2016rate} over the last decade. They used coalescent coupling strategy to compute the rate of convergence. In the additive noise setting, Hairer proved that the process converges in total variation to the stationary regime 
\begin{equation}
\alpha_H=\left\{\begin{array}{ll}
\frac{1}{8}&\text{ if }\quad H\in(\frac{1}{4},1)\backslash\left\{\frac{1}{2}\right\}\\
H(1-2H)&\text{ if }\quad H\in(0,\frac{1}{4}).
\end{array}\right.
\end{equation}
In the multiplicative noise setting (i.e. when $\sigma$ is not constant), Fontbona and Panloup in \cite{fontbona2017rate} extended those results under selected assumptions on $\sigma$ to the case where $H\in(\frac{1}{2},1)$ and finally Deya, Panloup and Tindel obtained in \cite{deya2016rate} this type of results in the rough setting $H\in(\frac{1}{3},\frac{1}{2})$.\\

In this paper, we focus on a general class of recursive discrete dynamics of the following form: we consider $\R^d$-valued sequences $(X_n)_{n\geqslant0}$ satisfying
\begin{equation}\label{SDSintro}
X_{n+1}=F(X_n,\Delta_{n+1})
\end{equation} 
where $(\Delta_n)_{n\in\Z}$ is an ergodic stationary Gaussian sequence and $F:\R^d\times\R^d\to\R^d$ is a deterministic function. 
%As a typical example, we can think about discretization of \eqref{SDE_Gaussian}. Stochastic dynamics like \eqref{SDSintro}, which are not Markovian in general, have not been much discussed except in the linear case like Autoregressive Moving-average (ARMA) models \cite{brockwell2013time} whose main objective is the prediction of stationary processes. When $F$ is linear, \eqref{SDSintro} is truely related to ARMA processes through the so-called Wold's decomposition theorem which implies that we can see $(\Delta_n)_{n\in\Z}$ as a moving-average of infinite order (see \cite{brockwell2013time} to get more details). Here, we investigate the problem of the long-time behavior of \eqref{SDSintro} for a general class of functions $F$. \\
As a typical example for $F$, we can think about Euler discretization of \eqref{SDE_Gaussian} (see Subsection \ref{subsection:euler_scheme} for a detailed study) or to autoregressive processes in the particular case where $F$ is linear. Note that such dynamics can be written as \eqref{SDSintro} through the so-called Wold's decomposition theorem which implies that we can see $(\Delta_n)_{n\in\Z}$ as a moving-average of infinite order (see \cite{brockwell2013time} to get more details). To the best of our knowledge, in this linear setting the litterature mainly focuses on the statistical analysis of the model (see for instance \cite{brouste2014asymptotic, brouste2012kalman}) and on mixing properties of such Gaussian processes when it is in addition stationary (on this topic see e.g. \cite{bradley2005basic, kolmogorov1960strong, rosenblatt1972central}).
%Several papers in the litterature of autoregressive models with memory have been devoted to its statistical analysis (see for instance \cite{brouste2014asymptotic}). [...] 
Back to the main topic, namely ergodic properties of \eqref{SDSintro} for general functions $F$, Hairer in \cite{hairerergodic} has provided technical criteria using ergodic theory to get existence and uniqueness of invariant distribution for this kind of a priori non-Markovian processes. Here, the objective is to investigate the problem of the long-time behavior of \eqref{SDSintro}. To this end, we first explain how it is possible to define invariant distributions in this non-Markovian setting and to obtain existence results.
%Except this linear case where specific convergence methods could be derived from the fact that $(X_n)_{n\geqslant0}$ is a Gaussian process, the ergodicity of stochastic dynamics like \eqref{SDSintro}, which are not Markovian in general, has not been much discussed. Therefore, the objective of this paper is to investigate the problem of the long-time behavior of \eqref{SDSintro} for a general class of functions $F$.\\
More precisely, with the help of the moving average representation of the noise process $(\Delta_n)_{n\in\Z}$, we prove that $(X_n,(\Delta_{n+k})_{k\leqslant0})_{n\geqslant0}$ can be realized through a Feller transformation $\mathcal{Q}$. In particular, an initial distribution of the dynamical system $(X,\Delta)$ is a distribution $\mu$ on $\R^d\times(\R^d)^{\Z^-}$. Rephrased in more probabilistic terms, an initial distribution is the distribution of the couple $(X_0,(\Delta_{k})_{k\leqslant0})_{n\geqslant0}$. Then, such an initial distribution is called an invariant distribution if it is invariant by the transformation $\mathcal{Q}$. The first part of the main theorem establishes the existence of such an invariant distribution.\\
Then, our main contribution is to state a general result about the rate of convergence to equilibrium in terms of the covariance structure of the Gaussian process. To this end, we use a coalescent coupling strategy. \\
Let us briefly explain how this coupling method is organized in this discrete-time framework. First, one considers two processes $(X^1_{n},(\Delta^1_{n+k})_{k\leqslant0})_{n\geqslant0}$ and $(X^2_{n},(\Delta^2_{n+k})_{k\leqslant0})_{n\geqslant0}$ following \eqref{SDSintro} starting respectively from $\mu_0$ and $\mu_\star$ (an invariant distribution). As a preliminary step, one waits that the two paths get close. Then, at each trial, the coupling attempt is divided in two steps. First, one tries in Step 1 to stick the positions together at a given time. Then, in Step 2, one attempts to ensure that the paths stay clustered until $+\infty$. Actually, oppositely to the Markovian setting where the paths remain naturally fastened together (by putting the same innovation on each marginal), the main difficulty here is that, staying together has a cost. In other words, this property can be ensured only with a non trivial coupling of the noises. Finally, if one of the two previous steps fails, one begins Step 3 by putting the same noise on each coordinate until the ``cost'' to attempt Step 1 is not too big. More precisely, during this step one waits for the memory of the coupling cost to decrease sufficiently and for the two trajectories to be in a compact set with lower-bounded probability.\\
In the main theorem previously announced, as a result of this strategy, we are able to prove that the law of the process $(X_{n+k})_{k\geqslant0}$ following \eqref{SDSintro} converges in total variation to the stationary regime with a rate upper-bounded by $Cn^{-v}$. The quantity $v$ is directly linked to the assumed exponential or polynomial asymptotic decay of the sequence involved in the moving-average representation of $(\Delta_n)_{n\in\Z}$, see \eqref{eq:moving_average} (or equivalently in its covariance function, see Remark \ref{rem:memory} and \ref{rem:covariance_moving-average_memory}). \\
Then, we apply our main theorem to fractional memory (including the Euler Scheme associated to fractional Stochastic Differential Equations). We first emphasize that with covariance structures with the same order of memory but different local behavior, we can get distinct rates of convergence to equilibrium.
Secondly, we highlight that the computation of the asymptotic decay of the sequence involved in the inversion of the Toeplitz operator (related to the moving-average representation) can be a very technical task (see proof of Proposition \ref{propo:b_rate}). \\
Now, let us discuss about specific contributions of this discrete-time approach. Above all, our result is quite general since, for instance, it includes discretization of \eqref{SDE_Gaussian} for a large class of Gaussian noise processes. Then, in several ways, we get a further understanding of arguments used in the coupling procedure. We better target the impact of the memory through the sequence both appearing in the moving-average representation and the covariance function of the Gaussian noise. Regarding Step 1 of the coupling strategy, the ``admissibility condition'' (which means that we are able to attempt Step 1 with a controlled cost) is rather more explicit than in the continuous-time setting. Finally, this paper, by deconstructing the coupling method through this explicit discrete-time framework, may weigh in favour of the sharpness of Hairer's approach. \\

The following section gives more details on the studied dynamics, describes the assumptions required to get the main result, namely Theorem \ref{thm:principal} and discuss about the application of our main result to the case of fractional memory in Subsection \ref{subsection:fractional_sequence}. Then, the proof of Theorem \ref{thm:principal} is achieved in Sections \ref{section:existence_inv_distribution}, \ref{section:gen_coupling_strategy}, \ref{section:coupling_under_H}, \ref{section:admissibility} and \ref{section:proof_thm_principal}, which are outlined at the end of Section \ref{section:setting_result}.

\section{Setting and main results}\label{section:setting_result}

\subsection{Notations}

The usual scalar product on $\R^d$ is denoted by $\langle~,~\rangle$ and $|~.~|$ stands either for the Euclidean norm on
$\R^d$ or the absolute value on $\R$. For a given $K>0$ we denote by $B(0,K)$ the $\R^d$ closed ball centered in $0$ with radius $K$. Then, the \textit{state space} of the process $X$ and the \textit{noise space} associated to $((\Delta_{n+k})_{k\leqslant0})_{n\geqslant0}$ are respectively denoted by $\X:=\R^d$ and $\W:=(\R^d)^{\Z^-}$. For a given measurable space $(\X_1,\mathcal{A}_1)$, $~\mathcal{M}_1(\X_1)$ will denote the set of all probability measures on $\X_1$. Let $(\X_2,\mathcal{A}_2)$ be an other measurable space and $f:\X_1\to\X_2$ be a measurable mapping. Let $\mu\in\mathcal{M}_1(\X_1)$, we denote by $f^*\mu$ the pushforward measure given by : $$\forall B\in\mathcal{A}_2,\quad f^*\mu(B):=\mu(f^{-1}(B)).$$
$\Pi_\X:\X\times\W\to\X$ and $\Pi_\W:\X\times\W\to\W$ stand respectively for the projection on the marginal $\X$ and $\W$.
For a given differentiable function $f:\R^d\to\R^d$ and for all $x\in\R^d$ we will denote by $J_f(x)$ the Jacobian matrix of $f$ valued at point $x$.
Finally, we denote by $\|~.~\|_{TV}$ the classical total variation norm: let $\nu,\mu\in\mathcal{M}_1(\X_1)$,
$$\|\mu-\nu\|_{TV}:=\sup\limits_{A\subset\X_1}|\mu(A)-\nu(A)|.$$

\subsection{Dynamical system and Markovian structure}\label{subsection:setting}

Let $X:=(X_n)_{n\geqslant0}$ denote an $\R^d$-valued sequence defined by: $X_0$ is a random variable with a given distribution and
\begin{equation}\label{SDS}
\forall n\geqslant0,\quad X_{n+1}=F(X_n,\Delta_{n+1}),
\end{equation} 
where $F:\R^d\times\R^d\to\R^d$ is continuous and $(\Delta_n)_{n\in\Z}$ is a stationary and purely non-deterministic Gaussian sequence with $d$ independent components. Hence, by Wold's decomposition theorem \cite{brockwell2013time} it has a moving average representation
\begin{equation}\label{eq:moving_average}
\forall n\in\Z, ~~ \Delta_n=\sum_{k=0}^{+\infty}a_k\xi_{n-k}~~ 
\end{equation}
with
\begin{align}\label{eq:a_k}
\left\{\begin{array}{l}
              (a_k)_{k\geqslant0}\in\R^\N ~\text{ such that }~ a_0\neq0 ~\text{ and }~ \sum_{k=0}^{+\infty}a_k^2<+\infty \\
              (\xi_k)_{k\in\Z} ~\text{ an i.i.d sequence such that }\xi_1\sim\mathcal{N}(0,I_d).
             \end{array}\right.
\end{align}
Without loss of generality, we assume that $a_0=1$. Actually, if $a_0\neq1$, we can come back to this case by setting 
\[\tilde{\Delta}_{n}=\sum_{k=0}^{+\infty}\tilde{a}_k\xi_{n-k}\]
with $\tilde{a}_k:=\frac{a_k}{a_0}$. 
%\textcolor{red}{Moreover, the independency of the components of $(\Delta_n)$ is not a restrictive assumption. Indeed, if the components are not independent, one can build a matrix $A$ and an other stationary Gaussian sequence $(\tilde{\Delta}_n)$ with $d$ independent components such that $(\Delta_n)\overset{(d)}{=}(A\tilde{\Delta}_n)$. This case is then fulfilled by including $A$ in the functional $F$ in \eqref{SDS} and by considering the noise $(\tilde{\Delta}_n)$ instead of $(\Delta_n)$. }
\begin{rem}\label{rem:memory}
The asymptotic behavior of the sequence $(a_k)_{k\geqslant0}$ certainly plays a key role to compute the rate of convergence to equilibrium of the process $(X_n)_{n\geqslant0}$. Actually, the memory induced by the noise process is quantified by the sequence $(a_k)_{k\geqslant0}$ through the identity
\[\forall n\in\Z,\forall k\geqslant0,\quad c(k):=\E\left[\Delta_n\Delta_{n+k}\right]=\sum_{i=0}^{+\infty}a_ia_{k+i}.\]
\end{rem}

The stochastic dynamics described in \eqref{SDS} is clearly non-Markovian. 
Let us see how it is possible to introduce a Markovian structure and how to define invariant distribution. This method is inspired by \cite{hairerergodic}.
The first idea is to look at $(X_n,(\Delta_{n+k})_{k\leqslant0})_{n\geqslant0}$ instead of $(X_n)_{n\geqslant0}$. Let us introduce the following concatenation operator 
\begin{align}
\sqcup: \W\times\R^d &\to\W\\
             (w,w')&\mapsto w\sqcup w' \nonumber
\end{align}
where $(w\sqcup w')_0=w'$ and $\forall k<0, ~~ (w\sqcup w')_k=w_{k+1}$.
Then \eqref{SDS} is equivalent to the system 
\begin{equation}\label{SDS_equiv}
(X_{n+1},(\Delta_{n+1+k})_{k\leqslant0})=\varphi((X_{n},(\Delta_{n+k})_{k\leqslant0}),\Delta_{n+1})
\end{equation}
where
\begin{align*}\varphi: (\X\times\W)\times\R^d&\to\X\times\W\\
                           ((x,w),w')&\mapsto (F(x,w'),w\sqcup w').
\end{align*}
Therefore, $(X_n,(\Delta_{n+k})_{k\leqslant0})_{n\geqslant0}$ can be realized through the Feller Markov transition kernel $\mathcal{Q}$ defined by
\begin{equation}\label{transition_kernel_Q}
\int_{\X\times\W}g(x',w')\mathcal{Q}((x,w),({\rm d}x',{\rm d}w'))=\int_{\R^d}g(\varphi((x,w),\delta))\mathcal{P}(w,{\rm d}\delta)
\end{equation}
where $\mathcal{P}(w,{\rm d}\delta):=\mathcal{L}(\Delta_{n+1}|(\Delta_{n+k})_{k\leqslant0}=w)$ does not depend on $n$ since $(\Delta_n)$ is a stationary sequence, and $g:\X\times\W\to\R$ is a measurable function. 
\begin{dfn}\label{invariant_dist}
A measure $\mu\in\mathcal{M}_1(\X\times\W)$ is said to be an invariant distribution for \eqref{SDS_equiv} and then for \eqref{SDS} if it is invariant by $\mathcal{Q}$, i.e.
\[\mathcal{Q}\mu=\mu.\]
\end{dfn}
However, the concept of uniqueness will be slightly different from the classical setting. Indeed, denote by $\mathcal{S}\mu$ the distribution of $(X^{\mu}_n)_{n\geqslant0}$ when we realize $(X^\mu_n,(\Delta_{n+k})_{k\leqslant0})_{n\geqslant0}$ through the transition $\mathcal{Q}$ and with initial distribution $\mu$. Then, we will speak of uniqueness of the invariant distribution up to the equivalence relation: $\mu\sim\nu~\Longleftrightarrow~\mathcal{S}\mu=\mathcal{S}\nu$.\\
Moreover, here uniqueness will be deduced from the coupling procedure. There exist some results about uniqueness using ergodic theory, like in \cite{hairerergodic}, but they will be not outlined here. 

\subsection{Preliminary tool: a Toeplitz type operator}\label{subsection:toeplitz}

The moving-average representation of the Gaussian sequence $(\Delta_n)_{n\in\Z}$ naturally leads us to define an operator related to the coefficients $(a_k)_{k\geqslant0}$. First, set 
\begin{equation*}
\ell_a(\Z^-,\R^d):=\left\{w\in(\R^d)^{\Z^-}~\left|~\forall k\geqslant0,~\left|\sum_{l=0}^{+\infty}a_lw_{-k-l}\right|<+\infty\right\}\right.
\end{equation*}
and define $\mathbf{T_a}$ on $\ell_a(\Z^-,\R^d)$ by
\begin{equation}
\mathbf{T_a}(w)=\left(\sum_{l=0}^{+\infty}a_lw_{-k-l}\right)_{k\geqslant0}.
\end{equation}

Due to the Cauchy-Schwarz inequality, we can check that for instance $\ell^2(\Z^-,\R^d):=\{w\in(\R^d)^{\Z^-}~|~\sum_{k=0}^{+\infty}|w_{-k}|^2<+\infty\}$ is included in $\ell_a(\Z^-,\R^d)$ due to the assumption $\sum_{k\geqslant0}a_k^2<+\infty$. This Toeplitz type operator $\mathbf{T_a}$ links $(\Delta_n)_{n\in\Z}$ to $(\xi_n)_{n\in\Z}$. The following proposition spells out the reverse operator.

\begin{propo}\label{prop:inverse_T}
Let $\mathbf{T_b}$ be the operator defined on $\ell_b(\Z^-,\R^d)$ in the same way as $\mathbf{T_a}$ but with the following sequence $(b_k)_{k\geqslant0}$ 
\begin{equation}\label{eq:def_b}
b_0=\frac1{a_0} \quad\text{ and }\quad \forall k\geqslant1,~~ b_k=-\frac{1}{a_0}\sum_{l=1}^{k}a_lb_{k-l}.
\end{equation}
Then,
\[\forall w\in\ell_a(\Z^-,\R^d),\quad \mathbf{T_b}(\mathbf{T_a}(w))=w
\quad\text{ and }\quad\forall w\in\ell_b(\Z^-,\R^d),\quad \mathbf{T_a}(\mathbf{T_b}(w))=w\]
that is $\mathbf{T_b}=\mathbf{T_a}^{-1}$ and $\ell_b(\Z^-,\R^d)=\mathbf{T_a}(\ell_a(\Z^-,\R^d))$.
\end{propo}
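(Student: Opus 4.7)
The plan is to regard $\mathbf{T_a}$ and $\mathbf{T_b}$ as one-sided convolution operators with the sequences $(a_k)_{k\geq 0}$ and $(b_k)_{k\geq 0}$, and to show that these two sequences are convolution inverses. This reduces the operator identity to a purely algebraic identity plus a Fubini-type swap of summations.

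First I would rewrite the recursion \eqref{eq:def_b} in its symmetric form: since $a_0=1$, the definition of $(b_k)$ is equivalent to
\[\sum_{l=0}^{n} a_l\, b_{n-l}=\delta_{n,0},\quad \forall n\geq 0,\]
and by commutativity of convolution also $\sum_{l=0}^{n} b_l\,a_{n-l}=\delta_{n,0}$. Equivalently, the formal power series $A(z)=\sum a_k z^k$ and $B(z)=\sum b_k z^k$ satisfy $A(z)B(z)=1$. Given $w\in\ell_a(\Z^-,\R^d)$, I set $v:=\mathbf{T_a}(w)$, so that $v_{-k}=\sum_{l\geq 0}a_l w_{-k-l}$, and compute formally
\[\bigl(\mathbf{T_b}(v)\bigr)_{-k}=\sum_{m=0}^{\infty} b_m v_{-k-m}=\sum_{m=0}^{\infty}\sum_{l=0}^{\infty} b_m a_l\, w_{-k-m-l}\stackrel{(\mathrm{Fubini})}{=}\sum_{n=0}^{\infty}\Bigl(\sum_{m=0}^{n} b_m a_{n-m}\Bigr)w_{-k-n}=w_{-k},\]
where the last equality uses the convolution identity above. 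The symmetric computation with the roles of $a$ and $b$ exchanged gives $\mathbf{T_a}(\mathbf{T_b}(w))=w$ for $w\in\ell_b(\Z^-,\R^d)$. The domain equality $\ell_b(\Z^-,\R^d)=\mathbf{T_a}(\ell_a(\Z^-,\R^d))$ then falls out of the two inverse relations: the inclusion $\mathbf{T_a}(\ell_a)\subset\ell_b$ is needed as a prerequisite for applying $\mathbf{T_b}$ to $v$ and should be verified directly from the definitions, while the reverse inclusion follows from writing any $v\in\ell_b$ as $v=\mathbf{T_a}(\mathbf{T_b}(v))$ with $\mathbf{T_b}(v)\in\ell_a$.

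The main obstacle is the Fubini step. The definition of $\ell_a$ only asserts that each inner series $\sum_l a_l w_{-k-l}$ converges (possibly conditionally), whereas swapping the order of summation genuinely requires absolute summability of the double series $\sum_{m,l}|b_m a_l w_{-k-m-l}|$. I would handle this by proving the identity first on the dense subclass of finitely supported sequences, where both operators act algebraically and everything is trivially absolutely summable, and then extending by truncation, leveraging the $\ell^2$-summability $\sum_k a_k^2<\infty$ and the analogous inherited control on $(b_k)$ (together with the Cauchy–Schwarz argument already sketched in the excerpt for showing $\ell^2(\Z^-,\R^d)\subset\ell_a(\Z^-,\R^d)$). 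The two membership checks $\mathbf{T_a}(\ell_a)\subset\ell_b$ and $\mathbf{T_b}(\ell_b)\subset\ell_a$, while conceptually routine, rest on the same technical estimates and would be established alongside the justification of the interchange.
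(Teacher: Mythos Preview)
Your approach is essentially identical to the paper's: the paper's proof is exactly the double-sum computation you wrote, swapping the order of summation and using $\sum_{k=0}^{i}b_ka_{i-k}=\delta_{i,0}$. The only difference is that the paper performs the interchange of summations without comment, whereas you flag the Fubini issue and propose a density/truncation workaround; your caution is reasonable, but the paper simply does not address it, so your extra justification goes beyond what the published proof contains.
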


\begin{proof} Let $w\in\ell_a(\Z^-,\R^d)$.
Then let $n\geqslant0$,
\begin{align*}
(\mathbf{T_b}(\mathbf{T_a}(w)))_{-n}
=\sum_{k=0}^{+\infty}b_k\sum_{l=0}^{+\infty}a_lw_{-n-k-l}
=\sum_{k=0}^{+\infty}\sum_{i=k}^{+\infty}b_ka_{i-k}w_{-n-i}
=\sum_{i=0}^{+\infty}\underbrace{\left(\sum_{k=0}^{i}b_ka_{i-k}\right)}_{\substack{=0\text{ except }\\\text{for }i=0}}w_{-n-i}=w_{-n}.
\end{align*}
We show in the same way that for $w\in\ell_b(\Z^-,\R^d)$, we have $\mathbf{T_a}(\mathbf{T_b}(w))=w$.
\end{proof}

\begin{rem}\label{rem:reverse_relation_b}
The sequence $(b_k)_{k\geqslant0}$ is of first importance in the sequel. The sketch of the proof of Theorem \ref{thm:principal} will use an important property linked to the sequence $(b_k)_{k\geqslant0}$: if two sequences $u$ and $v$ are such that \[\forall n\geqslant1,\quad u_n=\sum_{k=0}^{n-1}a_kv_{n-k}\]
then,
\[\forall n\geqslant1,\quad v_n=\sum_{k=0}^{n-1}b_ku_{n-k}.\]
This reverse identity and the asymptotic behavior of $(b_k)_{k\geqslant0}$ play a significant role in the computation of the rate of convergence.
\end{rem}

The following section is devoted to outline assumptions on $(a_k)_{k\geqslant0}$ and $(b_k)_{k\geqslant0}$ and then on $F$ to get the main result.

\subsection{Assumptions and general theorem}

First of all, let us introduce assumptions on $(a_k)_{k\geqslant0}$ and $(b_k)_{k\geqslant0}$.
All along the paper, we will switch between two types of assumptions called respectively the \textit{polynomial case} and the \textit{exponential case}.\\

\textbf{Hypothesis} $(\mathbf{H_{poly}})$: The following conditions hold,
\begin{itemize}
\item[$\bullet$] there exist $\rho,\beta>1/2$ and $C_\rho,C_\beta>0$ such that 
\[\forall k\geqslant0,~~ |a_k|\leqslant C_\rho(k+1)^{-\rho}\quad\text{ and }\quad
\forall k\geqslant0,~~ |b_k|\leqslant C_\beta(k+1)^{-\beta}.\]
\item[$\bullet$] there exist $\kappa\geqslant\rho+1$ and $C_\kappa>0$ such that
\[\forall k\geqslant0,~~ |a_k-a_{k+1}|\leqslant C_\kappa(k+1)^{-\kappa}.\]
\end{itemize}

\textbf{Hypothesis} $(\mathbf{H_{exp}})$: There exist $\lambda,\zeta>0$ and $C_\lambda,C_\zeta>0$ such that,
\[\forall k\geqslant0,~~ |a_k|\leqslant C_\lambda ~e^{-\lambda k}\quad\text{ and }\quad
\forall k\geqslant0,~~ |b_k|\leqslant C_\zeta ~e^{-\zeta k}.\]

\begin{rem}\label{rem:covariance_moving-average_memory}
$\rhd$ $(\mathbf{H_{poly}})$ and $(\mathbf{H_{exp}})$ are general parametric hypothesis which apply to a large class of Gaussian driven dynamics. These assumptions implicitly involve the covariance function of the noise process $(c(k))_{k\in\N}$ (see Remark \ref{rem:memory}) : there exists $\tilde{C}_\lambda>0$ and for all $\varepsilon\in[0,\rho]$ such that $\rho+\varepsilon>1$, there exists $\tilde{C}_{\rho,\varepsilon}>0$ such that
\[\forall k\geqslant0,\quad|c(k)|\leqslant\left\{\begin{array}{lll}
\tilde{C}_{\rho,\varepsilon}(k+1)^{-\rho+\varepsilon}&\text{ under }(\mathbf{H_{poly}})\\
\tilde{C}_\lambda e^{-\lambda k}&\text{ under }(\mathbf{H_{exp}})\\
\end{array}\right..\]
 $\rhd$ $(\mathbf{H_{poly}})$ and $(\mathbf{H_{exp}})$ also involve the coefficients appearing in the reverse
Toeplitz operator $\mathbf{T_a}^{-1}$ (see Proposition \ref{prop:inverse_T}). Even though $(a_k)_{k\geqslant0}$ and $(b_k)_{k\geqslant0}$ are related by \eqref{eq:def_b}, there is no general rule which connects $\rho$ and $\beta$. This fact will be highlighted in Subsection \ref{subsection:fractional_sequence}. Moreover, for the sake of clarity, we have chosen to state our main result when $(a_k)$ and $(b_k)$ belong to the same family of asymptotic decay rate.\\
$\rhd$ Due to the strategy of the proof (coalescent coupling in a non Markovian setting) we also need a bound on the discrete derivative of $(a_k)_{k\geqslant0}$.
\end{rem}

Let us now introduce some assumptions on the function $F$ which defines the dynamics \eqref{SDS}.
Throughout this paper $F:\X\times\R^d\to\X$ is a continous function and the following hypothesis $(\mathbf{H_1})$ and $(\mathbf{H_2})$ are satisfied. \\

\textbf{Hypothesis} $(\mathbf{H_1})$: There exists a continous function $V:\X\to\R^*_+$ satisfying 
$\lim\limits_{|x|\to+\infty}V(x)=+\infty$ and $\exists\gamma\in(0,1)$ and $C>0$ such that for all $(x,w)\in\X\times\R^d$, 
\begin{equation}\label{ineq:Lyapunov_type_condition}
V(F(x,w))\leqslant\gamma V(x)+C(1+|w|).
\end{equation}

\begin{rem} $\rhd$ As we will see in Section \ref{section:existence_inv_distribution}, this condition ensures the existence of a Lyapunov function $V$ and then of an invariant distribution. %(in a sense made precise below)
Such a type of assumption also appears in the litterature of discrete Markov Chains (see e.g. equation $(7)$ in \cite{down1995exponential}) but in an integrated form. More precisely, in our non-Markovian setting, a pathwise control is required to ensure some control on the moments of the trajectories before the successful coupling procedure (this fact is detailed in Subsection \ref{subsection:compact_return}). \\
$\rhd$ This assumption is also fulfilled if we have a function $V$ with $V(F(x,w))\leqslant\gamma V(x)+C(1+|w|^p)$ for a given $p>1$ instead of \eqref{ineq:Lyapunov_type_condition} since the function $V^{1/p}$ satisfies $(\mathbf{H_1})$ (with the help of the elementary inequality $|u+v|^{1/p}\leqslant|u|^{1/p}+|v|^{1/p}$ when $p>1$).
\end{rem}

%\begin{rem} $\rhd$ As we will see in Section \ref{section:existence_inv_distribution}, this condition ensures the existence of \textcolor{red}{a Lyapunov function $V$ and then of} an invariant distribution. %(in a sense made precise below)
%\textcolor{red}{Such a type of assumption also appears in the litterature of Markov Chains (see e.g. equation $(7)$ in \cite{down1995exponential}) but in an integrated form. More precisely, in our non-Markovian setting, a pathwise control is required (let us note that if the noises were iid random variables, we would get the same condition as in the Markovian setting by integrating \eqref{ineq:Lyapunov_type_condition}) if some trial in the coupling procedure fails. Indeed during Step 3, as mentioned in the introduction, we want to wait enough in order that the two trajectories belong to a compact set with a lower bounded probability. Such a lower bound is obtained under $(\mathbf{H_1})$ (see Subsection \ref{subsection:compact_return}).}\\
%$\rhd$ {\color{red} This assumption is also fulfilled if we have a function $V$ with $V(F(x,w))\leqslant\gamma V(x)+C(1+|w|^p)$ for a given $p>1$ instead of \eqref{ineq:Lyapunov_type_condition} since the function $V^{1/p}$ satisfies $(\mathbf{H_1})$.}
%\end{rem}

We define $\tilde{F}:\X\times\R^d\times\R^d\to\X~~$ by $\tilde{F}(x,u,y)=F(x,u+y)$.  We assume that $\tilde{F}$ satisfies the following conditions:\\

\textbf{Hypothesis} $(\mathbf{H_2})$: Let $K>0$. We assume that there exists $\tilde{K}>0$ such that for every $\mathbf{x}:=(x,x',y,y')$ in $B(0,K)^4$, there exist $\Lambda_{\mathbf{x}}:\R^d\to\R^d,~M_{K}>0$ and $C_{\tilde{K}}$
such that the following holds 
\begin{itemize}
\item[$\bullet$] $\Lambda_{\mathbf{x}}$ is a bijection from $\R^d$ to $\R^d$. Moreover, it is a $\mathcal{C}^1$-diffeomorphism between two open sets $U$ and $D$ such that $\R^d\backslash U$ and $\R^d\backslash D$ are negligible sets.
% a measurable, invertible and almost everywhere differentiable function 
% such that $\Lambda_{\mathbf{x}}^{-1}$ is also measurable.
\item[$\bullet$] for all $u\in B(0,\tilde{K})$, 
\begin{align}
\tilde{F}(x,u,y)&=\tilde{F}(x',\Lambda_{\mathbf{x}}(u),y')\label{eq:controlability}\\
\text{and }\quad|\det(J_{\Lambda_{\mathbf{x}}}&(u))|\geqslant C_{\tilde{K}}.\label{ineq:lambda_jacobian_bound}
\end{align}
\item[$\bullet$] for all $u\in \R^d$, 
\begin{equation}\label{ineq:lambda_unif_upperbound}
|\Lambda_{\mathbf{x}}(u)-u|\leqslant M_{K}.
\end{equation}
\end{itemize} 

\begin{rem} $\rhd$ Let us make a few precisions on the arguments of $\tilde{F}$: $x$ is the position of the process, $u$ the increment of the innovation process and $y$ is related to the past of the process (see next item for more details). The boundary $C_{\tilde{K}}$ and $M_{K}$ are independent from $x,x',y$ and $y'$. This assumption can be viewed as a kind of controlability assumption in the following sense: the existence of $\Lambda_{\mathbf{x}}$ leads to the coalescence of the positions by \eqref{eq:controlability}. 
Rephrased in terms of our coalescent coupling strategy, this \textit{ad hoc} assumption is required to achieve the first step. More precisely, as announced in the introduction, we take two trajectories $(X^1,\Delta^1)$ and $(X^2,\Delta^2)$ following \eqref{SDS} and we want to stick $X^1$ and $X^2$ at a given time $n+1$. Through the function $\Lambda_{\mathbf{x}}$ in $(\mathbf{H}_2)$, we can build a couple of Gaussian innovations $(\xi^1_{n+1},\xi^2_{n+1})$ with marginal distribution $\mathcal{N}(0,I_d)$ to achieve this goal (with lower-bounded probability), so that: $X^1_{n+1}=X^{2}_{n+1}$ which is equivalent to $\tilde{F}\left(x,~\xi^1_{n+1},~y\right)=\tilde{F}\left(x',~\xi^2_{n+1},~y'\right)$ with $(x,x',y,y')=\left(X^1_n,X^2_n,\sum_{k=1}^{+\infty}a_k\xi^1_{n+1-k},~\sum_{k=1}^{+\infty}a_k\xi^2_{n+1-k}\right)$ (see Subsection \ref{subsubsection:hitting_step_lem}).\\
$\rhd$ Assumption $(\mathbf{H_2})$ can be applied to a large class of functions $F$, as for example: $F(x,w)=f(b(x)+\sigma(x)w)$ where $\sigma$ is continuously invertible and $\sigma^{-1}$ and $b$ are continuous functions (we do not need any assumption on $f$ as we will see in the appendix Remark \ref{rem:relax_sigma_assumption}). Actually, Condition \eqref{eq:controlability} can be obtained through an application of the implicit function theorem: if we assume that there exists a point $(0,u'_{\mathbf{x}})$ such that $\tilde{F}(x,0,y)=\tilde{F}(x',u'_{\mathbf{x}},y')$ and denote by $G_{\mathbf{x}}:(u,u')\mapsto\tilde{F}(x,u,y)-\tilde{F}(x',u'+u'_{\mathbf{x}},y')$, then if $(\partial_{u'}G_{\mathbf{x}})(0,0)\neq0$, the implicit function theorem yields \eqref{eq:controlability}.
As we will see in Subsection \ref{subsection:euler_scheme}, Condition \eqref{ineq:lambda_unif_upperbound} can be also easily fulfilled (see proof of Theorem \ref{thm:euler_scheme}). 
\end{rem}

%$\rhd$ \textcolor{red}{This assumption is not as restrictive as it may appear at first sight. Actually, condition \eqref{eq:controlability} is an application of the implicit function theorem. If we assume that there exists a point $(0,u'_{\mathbf{x}})$ such that $\tilde{F}(x,0,y)=\tilde{F}(x',u'_{\mathbf{x}},y')$ and denote by $G_{\mathbf{x}}:(u,u')\mapsto\tilde{F}(x,u,y)-\tilde{F}(x',u'+u'_{\mathbf{x}},y')$, then if $(\partial_{u'}G_{\mathbf{x}})(0,0)\neq0$, the implicit function theorem gives \eqref{eq:controlability}. Moreover, to get \eqref{ineq:lambda_unif_upperbound}, the idea is to take a translation for $\Lambda_{\mathbf{x}}$ outside a well chosen ball and at the end, to extend $\Lambda_{\mathbf{x}}$ to $\R^d$. Finally, such a condition is quite general and certainly applies to dynamics linear in the noise as discretization of \eqref{SDE_Gaussian} (see Subsection \ref{subsection:euler_scheme}) but not exclusively.}

We are now in position to state our main result. %Let $\mathcal{L}((X^{\mu_0}_n)_{n\geqslant0})$ denote the distribution of the process $X$ starting from an initial condition $\mu_0$ (see Subsection \ref{subsection:markov_structure} below for detailed definitions of initial condition and invariant distribution) and for an invariant distribution $\mu_\star$ denote by $\mathcal{S}\mu_\star$ the law of the stationary solution. Finally, we denote by $\|.\|_{TV}$ the classical total variation norm.

\begin{thm}\label{thm:principal} Assume $(\mathbf{H_1})$ and $(\mathbf{H_2})$. Then,
\begin{itemize}
\item[(i)] There exists an invariant distribution $\mu_\star$ associated to \eqref{SDS}.
\item[(ii)] Assume that $(\mathbf{H_{poly}})$ is true with $\rho,\beta>1/2$ and $\rho+\beta>3/2$. Then, uniqueness holds for the invariant distribution $\mu_\star$. Furthermore, for every initial distribution $\mu_0$ for which $\int_\X V(x)\Pi^*_\X\mu_0(dx)<+\infty$ and for all $\varepsilon>0$, there exists $C_\varepsilon>0$ such that 
\[\|\mathcal{L}((X^{\mu_0}_{n+k})_{k\geqslant0})-\mathcal{S}\mu_\star \|_{TV}\leqslant C_\varepsilon~n^{-(v(\beta,\rho)-\varepsilon)}.\] 
where the function $v$ is defined by 
\[v(\beta,\rho)=\sup\limits_{\alpha\in\left(\frac{1}{2}\vee\left(\frac{3}{2}-\beta\right),\rho\right)}\min\{1,2(\rho-\alpha)\}(\min\{\alpha,~\beta,~\alpha+\beta-1\}-1/2).\] 
\item[(iii)] Assume that $(\mathbf{H_{exp}})$ is true, then uniqueness holds for the invariant distribution $\mu_\star$. Furthermore, for every initial distribution $\mu_0$ for which $\int_\X V(x)\Pi^*_\X\mu_0(dx)<+\infty$ and for all\\ $p>0$, there exists $C_p>0$ such that 
\[\|\mathcal{L}((X^{\mu_0}_{n+k})_{k\geqslant0})-\mathcal{S}\mu_\star \|_{TV}\leqslant C_p~n^{-p}.\]
\end{itemize}
\end{thm}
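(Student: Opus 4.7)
The plan is a Krylov--Bogolyubov argument adapted to the Feller kernel $\mathcal{Q}$ on $\X\times\W$. I would start from an initial law $\mu_0$ with $\int V\,{\rm d}\Pi_\X^*\mu_0<\infty$ and consider the Cesaro averages $\bar\mu_n:=\frac1n\sum_{k=0}^{n-1}\mathcal{Q}^k\mu_0$. Iterating $(\mathbf{H_1})$ together with the stationarity and Gaussian integrability of $(\Delta_n)$ should give a uniform bound $\sup_n\int V\,{\rm d}\Pi_\X^*\bar\mu_n<\infty$; on the $\W$-marginal, tightness is automatic since $\Pi_\W^*\bar\mu_n$ is constant equal to the stationary law of $(\Delta_k)_{k\leq 0}$. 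Tightness of $\bar\mu_n$ combined with the Feller property of $\mathcal{Q}$, which follows from continuity of $F$ and of $\varphi$, then produces an invariant distribution $\mu_\star$ as a weak cluster point.

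\textbf{Parts (ii)--(iii): coupling setup.} For the rate, I would build on one probability space two realisations $(X^1,\Delta^1)$ and $(X^2,\Delta^2)$ of \eqref{SDS_equiv} starting from $\mu_0$ and $\mu_\star$ respectively, driven by innovations $(\xi^i_n)$ via \eqref{eq:moving_average}. The elementary bound $\|\mathcal{L}((X^{\mu_0}_{n+k})_k)-\mathcal{S}\mu_\star\|_{TV}\leq 2\,\mathbb{P}(\tau_\infty>n)$ reduces the problem to tail estimates on the first time $\tau_\infty$ after which $X^1_m=X^2_m$ for every $m$. I would organise $\tau_\infty$ as a countable sequence of attempts at times $T_0<T_1<\cdots$, each split into four phases: (1) a \emph{return-to-compact} wait using the Lyapunov function $V$ until $|X^i_{T_k}|\leq K$; (2) \emph{Step~1 (hitting)} at time $T_k+1$, where the diffeomorphism $\Lambda_{\mathbf{x}}$ of $(\mathbf{H_2})$ applied at $\mathbf{x}=(X^1_{T_k},X^2_{T_k},\sum_{j\geq 1}a_j\xi^1_{T_k+1-j},\sum_{j\geq 1}a_j\xi^2_{T_k+1-j})$ couples $(\xi^1_{T_k+1},\xi^2_{T_k+1})$ so that $X^1_{T_k+1}=X^2_{T_k+1}$, with success probability bounded below uniformly on $B(0,K)^2$ thanks to \eqref{ineq:lambda_jacobian_bound}--\eqref{ineq:lambda_unif_upperbound}; (3) \emph{Step~2 (sticking)}, where, conditional on Step~1, I couple the entire future $(\xi^2_{T_k+j})_{j\geq 2}$ to $(\xi^1_{T_k+j})_{j\geq 2}$ so as to enforce $\Delta^1_n=\Delta^2_n$ for every $n>T_k$, which by \eqref{SDS} propagates $X^1=X^2$ forever; (4) a Step~3 abort with synchronous noise if either step fails, held long enough for the mismatch inherited from the aborted attempt to decay.

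\textbf{Rate computation and main obstacle.} The crux is to quantify the cost of Step~2. Enforcing $\Delta^1_n=\Delta^2_n$ past $T_k$ amounts to choosing $\xi^2-\xi^1$ as a linear functional of past mismatches, and the identity in Remark~\ref{rem:reverse_relation_b} expresses this functional precisely through $(b_k)$, while the residual effect of pre-$T_k$ asynchronous noise on the trajectory at $T_k+1$ propagates through $(a_k)$. The Radon--Nikodym density against independent Gaussian innovations is the exponential of a quadratic form whose $L^2$-norm is controlled by weighted $\ell^2$ sums built from past Step~1 residuals. Tuning the Step~3 waiting window to be of order $n^\alpha$ (polynomial case) or $C\log n$ (exponential case) balances (a) the Gaussian cost of Step~2, whose tail blends the $(a_k)$ and $(b_k)$ contributions to yield the factor $\min\{\alpha,\beta,\alpha+\beta-1\}-\tfrac12$, against (b) the probability that pre-window contamination spoils Step~1's admissibility, producing the prefactor $\min\{1,2(\rho-\alpha)\}$ via $\sum_{k\geq n^\alpha}a_k^2$. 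Optimising over $\alpha\in(\tfrac12\vee(\tfrac32-\beta),\rho)$ then reproduces exactly $v(\beta,\rho)$; the hypothesis $\rho+\beta>3/2$ is precisely what guarantees that this admissible interval is non-empty and $v>0$. The exponential case works identically with stretched exponentials replacing powers, ultimately giving arbitrary polynomial rates after optimisation. Uniqueness of $\mu_\star$ (modulo $\sim$) falls out by applying the convergence bound with $\mu_0$ a second invariant distribution. The hardest step will be the joint $\ell^2$-analysis of the coupling cost across all past attempts, where the infinite-memory structure forces careful bookkeeping of how mismatches accumulate and interact through the Toeplitz kernels $\mathbf{T_a}$ and $\mathbf{T_b}$.
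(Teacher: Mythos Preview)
Your outline for (i) is essentially the paper's Krylov--Bogolyubov argument. For (ii)--(iii) the broad architecture (hitting / sticking / waiting) and the identification of the inverse Toeplitz sequence $(b_k)$ as the key object are correct, but the mechanics you sketch for the rate are off in two linked ways that would prevent the argument from closing.

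First, the parameter $\alpha$ is \emph{not} a waiting-time exponent. In the paper it is the decay rate in a \emph{$(K,\alpha)$-admissibility condition}: at the start of each trial one requires $\bigl|\sum_{k>n}a_k g_{\tau+n-k}\bigr|\leq (n+1)^{-\alpha}$ for all $n\geq 0$. The Step~3 duration is then calibrated so that this condition is \emph{deterministically restored} after every failure; there is no ``probability that contamination spoils admissibility'' and hence no tail like $\sum_{k\geq n^\alpha}a_k^2$. Concretely the paper takes $\Delta t_3^{(j)}=t_*\varsigma^j\,2^{\theta\ell_j^*}$ with $\theta>(2(\rho-\alpha))^{-1}$, where $\ell_j^*$ records on which sub-block of Step~2 trial $j$ failed, and Proposition~\ref{propo:adm1} shows by summing over all past trials that admissibility then holds almost surely.

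Second, and consequently, the two factors in $v(\beta,\rho)$ do not arise as you describe. Step~2 cannot be attempted on the whole future at once (the required Girsanov drift would not be in $\ell^2$); it is split into blocks $I_{j,\ell}$ of length $\sim 2^\ell$. Admissibility plus the $(b_k)$-inversion yields $\|g^{(s)}\|_{I_{j,\ell}}\lesssim 2^{-\tilde\alpha\ell}$ with $\tilde\alpha=\min\{\alpha,\beta,\alpha+\beta-1\}-\tfrac12$, so $\PP(\ell_j^*=\ell)\lesssim 2^{-\tilde\alpha\ell}$. The prefactor $\min\{1,2(\rho-\alpha)\}$ then enters because a failed trial lasts $\Delta\tau_j\lesssim \varsigma^j\,2^{(\theta\vee 1)\ell_j^*}$, so $\E[(\Delta\tau_j)^p\mathds{1}_{\Delta\tau_j<\infty}]<\infty$ only for $p<\tilde\alpha/(\theta\vee1)$; minimising $\theta$ against the constraint $\theta>(2(\rho-\alpha))^{-1}$ produces exactly that factor. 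A Markov/Minkowski bound on $\tau_\infty=\sum_j\Delta\tau_j\mathds{1}_{\{j^{(s)}>j\}}$, using the geometric tail of $j^{(s)}$ to absorb the $\varsigma^{jp}$ growth, finishes the job. Without the block structure of Step~2 and the correct origin of the $\theta\vee1$ denominator, your optimisation over $\alpha$ would not land on $v(\beta,\rho)$.
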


\begin{rem}\label{rem:thm_exp}$\rhd$ In view of Theorem \ref{thm:principal} (iii), one can wonder if we could obtain exponential or subexponential rates of convergence in this case. We focus on this question in Remark \ref{rem:exp_rate}.\\ %and in particular we look at the case of a finite moving average. \\
$\rhd$ The rates obtained in Theorem \ref{thm:principal} hold for a large class of dynamics. This generality implies that the rates are not optimal in all situations. In particular, when $F$ have ``nice'' properties an adapted method could lead to better rates. For example, let us mention the particular case where the dynamical system \eqref{SDS} is reduced to: $X_{n+1}=AX_{n}+\sigma\Delta_{n+1}$ where $A$ and $\sigma$ are some given matrices. As for (fractional) Ornstein-Uhlenbeck processes in a continuous setting, the study of linear dynamics can be achieved with specific methods. Here, the sequence $X$ benefits of a Gaussian structure so that the convergence in distribution could be studied through the covariance of the process. One can also remark that since for two paths $X$ and $\tilde{X}$ built with the same noise, we have: $\tilde{X}_{n+1}-X_{n+1}=A(\tilde{X}_{n}-X_{n})$, a simple induction leads to $\E[|\tilde{X}_{n}-X_{n}|^2]\leqslant |A^n|^2\E[|\tilde{X}_{0}-X_{0}|^2]$. So without going into the details, if $\rho(A):=\lim\limits_{n\to+\infty}|A^n|^{1/n}<1$, such bounds lead to geometric rates of convergence in Wasserstein distance and also in total variation distance (on this topic, see e.g. \cite{panloup2018sub}). 
%To conclude, it is worth noting that our coupling strategy (and the related rates in Theorem \ref{thm:principal}) is more adapted to a setting where the drift term in $F$ does not contract everywhere as in linear dynamics.}
\end{rem}

In the following subsection, we test the assumptions of our main result Theorem \ref{thm:principal} (especially $(\mathbf{H_{1}})$ and $(\mathbf{H_{2}})$) on the Euler scheme of SDEs like \eqref{SDE_Gaussian}.

\subsection{The Euler Scheme}\label{subsection:euler_scheme}
Recall that $\X=\R^d$.
In this subsection, set 
\begin{align} \label{function:euler_scheme}
F_h: \quad \X\times\R^d&\to\X\nonumber\\
                           (x,w)&\mapsto x+hb(x)+\sigma(x)w.
\end{align}

where $h>0$, $b:\X\to\X$ is continuous and $\sigma: \X\to\mathcal{M}_d(\R)$ is a continuous and bounded function on $\X$. For all $x\in\X$ we suppose that $\sigma(x)$ is invertible and we denote by $\sigma^{-1}(x)$ the inverse. Moreover, we assume that $\sigma^{-1}$ is a continuous function and that $b$ satisfies a Lyapunov type assumption that is:
\begin{itemize}
 \item[(\textbf{L1})]$\exists C>0$ such that
 \begin{equation}\label{eq:sublinearity_b}
 \forall x\in\X,~~|b(x)|\leqslant C(1+|x|).
 \end{equation}
 \item[(\textbf{L2})]$\exists \tilde{\beta}\in\R \text{ and }\tilde{\alpha}>0$ such that
  \begin{equation}\label{eq:Lyapunov_b}
\forall x\in\X,~~\langle x,b(x)\rangle\leqslant \tilde{\beta}-\tilde{\alpha}|x|^2.
 \end{equation}
\end{itemize}

\begin{rem} This function $F_h$ corresponds to the Euler scheme associated to SDEs like \eqref{SDE_Gaussian}. The conditions on the function $b$ are classical to get existence of invariant distribution. 
\end{rem}

In this setting the function $\tilde{F}_h$ (introduced in Hypothesis $(\mathbf{H_2})$) is given by 
\begin{align*} 
\tilde{F}_h: \quad \X\times\R^d\times\R^d&\to\X\\
                           (x,u,y)&\mapsto x+hb(x)+\sigma(x)(u+y).
\end{align*}

\begin{thm}\label{thm:euler_scheme} Let $h>0$. Let $F_h$ be the function defined above. Assume that $b:\X\to\X$ is a continuous function satisfying $(\mathbf{L1})$ and $(\mathbf{L2})$ and $\sigma: \X\to\mathcal{M}_d(\R)$ is a continous and bounded function such that for all $x\in\X$, $\sigma(x)$ is invertible and $x\mapsto\sigma^{-1}(x)$ is a continuous function. Then, $(\mathbf{H_1})$ and $(\mathbf{H_2})$ hold for $F_h$ as soon as $~0<h<\min\left\{\sqrt{1+\frac{\tilde{\alpha}}{2C^2}}-1,~\frac{1}{\tilde{\alpha}}\right\}$ where $\tilde{\alpha}$ and $C$ are given by $(\mathbf{L1})$ and $(\mathbf{L2})$.
\end{thm}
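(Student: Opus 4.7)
\emph{Proof strategy.} The plan splits into verifying $(\mathbf{H_1})$ and $(\mathbf{H_2})$ for $F_h$; continuity of $F_h$ is clear from that of $b$ and $\sigma$.

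\emph{Verifying $(\mathbf{H_1})$.} The natural candidate is $V(x)=\sqrt{1+|x|^2}$, continuous, strictly positive and coercive. Writing $F_h(x,w)=(x+hb(x))+\sigma(x)w$, the elementary inequality $\sqrt{1+|a+b|^2}\leqslant\sqrt{1+|a|^2}+|b|$ together with $|\sigma(x)w|\leqslant\|\sigma\|_\infty|w|$ reduces the problem to bounding $|x+hb(x)|^2$. Expanding the square and applying $(\mathbf{L2})$ to the cross term $2h\langle x,b(x)\rangle$ and $(\mathbf{L1})$ (with $(1+|x|)^2\leqslant 2(1+|x|^2)$) to $h^2|b(x)|^2$ produces an estimate of the form
\[|x+hb(x)|^2\leqslant (1-2h\tilde\alpha+2h^2C^2)|x|^2+2h\tilde\beta+2h^2C^2.\]
The constraint $h<\sqrt{1+\tilde\alpha/(2C^2)}-1$ is calibrated to keep the leading coefficient strictly less than $1$, while the side condition $h<1/\tilde\alpha$ controls the sign of the residual constants; subadditivity of the square root then closes the bound $V(F_h(x,w))\leqslant\gamma V(x)+C'(1+|w|)$ with $\gamma\in(0,1)$.

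\emph{Verifying $(\mathbf{H_2})$ - the local part.} For $\mathbf{x}=(x,x',y,y')\in B(0,K)^4$, the coalescence equation $\tilde F_h(x,u,y)=\tilde F_h(x',\Lambda_{\mathbf{x}}(u),y')$ can be solved explicitly thanks to the invertibility of $\sigma(x')$, yielding the affine map
\[\Lambda^0_{\mathbf{x}}(u)=\sigma(x')^{-1}\sigma(x)\,u+\sigma(x')^{-1}\bigl[(x-x')+h(b(x)-b(x'))+\sigma(x)y-\sigma(x')y'\bigr].\]
This is a $\mathcal{C}^1$-diffeomorphism of $\R^d$ with constant Jacobian $\sigma(x')^{-1}\sigma(x)$; continuity of $\sigma$ and $\sigma^{-1}$ on the compact set $B(0,K)$ furnishes a uniform strictly positive lower bound $C_{\tilde K}$ for $|\det(\sigma(x')^{-1}\sigma(x))|$ over $\mathbf{x}\in B(0,K)^4$, which is \eqref{ineq:lambda_jacobian_bound}.

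\emph{Global extension and main obstacle.} The affine $\Lambda^0_{\mathbf{x}}$ does not satisfy the global bound \eqref{ineq:lambda_unif_upperbound}, since $\Lambda^0_{\mathbf{x}}(u)-u$ is itself affine in $u$ and is only bounded when $\sigma(x)=\sigma(x')$. Because the coalescence identity \eqref{eq:controlability} is required only inside $B(0,\tilde K)$, I would modify $\Lambda^0_{\mathbf{x}}$ outside this ball by interpolating with the identity, using a radial cut-off $\chi$ equal to $1$ on $B(0,\tilde K)$ and to $0$ outside a larger ball, setting $\Lambda_{\mathbf{x}}(u):=u+\chi(u)\bigl(\Lambda^0_{\mathbf{x}}(u)-u\bigr)$. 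A uniform displacement bound $M_K$ then follows from compactness of the map $\mathbf{x}\mapsto(\sigma(x')^{-1}\sigma(x),v_{\mathbf{x}})$ on $B(0,K)^4$. The main technical obstacle is preserving global bijectivity and the $\mathcal{C}^1$-diffeomorphism property between two open sets of full measure: one has to pick the transition annulus wide enough relative to $\sup_{\mathbf{x}}\|\sigma(x')^{-1}\sigma(x)-I\|$ to keep the glued Jacobian invertible. The relaxed regularity requested in $(\mathbf{H_2})$ (a diffeomorphism only off a negligible set) provides exactly the flexibility needed to perform the gluing, and one could use a piecewise construction in place of a smooth cut-off if convenient.
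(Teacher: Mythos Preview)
Your treatment of $(\mathbf{H_1})$ is correct and in fact slightly cleaner than the paper's: you isolate the noise term via $\sqrt{1+|a+b|^2}\leqslant\sqrt{1+|a|^2}+|b|$, whereas the paper expands $|F_h(x,w)|^2$ in full and controls the cross terms $\langle x,\sigma(x)w\rangle$ and $\langle b(x),\sigma(x)w\rangle$ by Young's inequality with parameter $\varepsilon=h^2$ (this is what produces the specific bound on $h$ in the statement; your route actually needs only the weaker condition $h<\tilde\alpha/C^2$). The local part of $(\mathbf{H_2})$---the affine $\Lambda^0_{\mathbf{x}}$ and the uniform Jacobian lower bound---matches the paper exactly, with $\tilde K=K$.

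The genuine gap is in your global extension. The cutoff map $u\mapsto u+\chi(u)\bigl(\Lambda^0_{\mathbf{x}}(u)-u\bigr)$ need not be a bijection of $\R^d$: writing $A=\sigma(x')^{-1}\sigma(x)$, its differential contains the block $I+\chi(u)(A-I)=(1-\chi(u))I+\chi(u)A$, and for a fixed value $t=\chi(u)\in(0,1)$ the matrix $(1-t)I+tA$ can be singular regardless of how wide the transition annulus is chosen. For instance, in even dimension one may take $\sigma$ to be a continuous rotation field so that $A=-I$ is attained for some $(x,x')\in B(0,K)^2$; then the entire level set $\{\chi=1/2\}$ collapses to a single point. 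Making the annulus wide shrinks $\nabla\chi$ but leaves this term untouched, so your proposed fix does not work in general. The paper bypasses this by a different, explicitly geometric gluing: it strips off the translation $B(\mathbf{x})$, sets the map equal to the identity outside a ball $B(0,K_1)$ containing the image ellipsoid $\mathcal{E}_K=A\cdot B(0,K)$, and on the intermediate annulus $B(0,K_1)\setminus B(0,K)$ defines a radial-type interpolation built from the quadratic form of $\mathcal{E}_K$ (with an explicit inverse exhibited), rather than a convex homotopy towards the identity. That construction never passes through a singular linear map, which is precisely the obstruction your cutoff runs into.
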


\begin{proof} 
For the sake of conciseness, the proof is detailed in Appendix \ref{appendix:proof_euler_scheme}. Regarding $(\mathbf{H_1})$, it makes use of ideas developped in \cite{cohen2011approximation, cohen2014approximation}. For $(\mathbf{H_2})$, the construction of $\Lambda_{\mathbf{x}}$ is explicit. The idea is to take $\Lambda_{\mathbf{x}}(u):=\sigma^{-1}(x')\sigma(x)u+\sigma^{-1}(x')(x-x'+h(b(x)-b(x')))+\sigma^{-1}(x')\sigma(x)y-y'$ inside $B(0,K)$ (which ensures \eqref{eq:controlability} with $\tilde{K}=K$), to set $\Lambda_{\mathbf{x}}(u):=u$ outside an other ball $B(0,K_1)$ with a well chosen $K_1$ (which almost gives \eqref{ineq:lambda_unif_upperbound}) and to extend $\Lambda_{\mathbf{x}}$ into $\R^d$ by taking into account the various hypothesis on $\Lambda_{\mathbf{x}}$.
\end{proof}

\begin{rem} The coefficient $\sigma$ is assumed to be bounded but we can relax a bit this assumption: if $|\sigma(x)|\leqslant C(1+|x|^{\kappa})$ for some $\kappa\in(0,1)$, then Theorem \ref{thm:euler_scheme} is true again. We provide in Remark \ref{rem:relax_sigma_assumption} the necessary adjustments of the proof to get this result. \\
%$\rhd$ A natural example of function $F$ can be derived from the Euler Scheme: we can choose $F(x,w):=b(x)+\sigma(x)w~$ with $\sigma$ as previously and $b$ being a sublinear function : $|b(x)|\leqslant C+\alpha|x|$ with $\alpha<1$. In that case, $F$ verifies $(\mathbf{H_1})$ and $(\mathbf{H_2})$. %Going a bit further, we can add a sublinear function $\varphi$ (such that $|\varphi(x)|\leqslant C(1+|x|)$ with $C<1/\alpha$) so that $F(x,w)=\varphi\left(b(x)+\sigma(x)w\right)$ also satisfies $(\mathbf{H_1})$ and $(\mathbf{H_2})$. %In order to be less general, let us cite a precise arbitrary example in dimension $1$:
\end{rem}

The two following subsections are devoted to outline examples of sequences which satisfy hypothesis $(\mathbf{H_{exp}})$ or $(\mathbf{H_{poly}})$. In particular, Subsection \ref{subsection:fractional_sequence} includes the case where the process $(\Delta_n)_{n\in\Z}$ corresponds to fractional Brownian motion increments.

\subsection{Two explicit cases which satisfy $(\mathbf{H_{exp}})$}\label{subsection:exponantial_rate}

First, let us mention the explicit exponential case with the following definition for the sequence $(a_k)_{k\geqslant0}$
\begin{equation}\label{eq:a_k_expo}
a_0=1\quad\text{ and }\quad\forall k\in\N^*,~a_k=C_ae^{-\lambda k}
\end{equation}
with $C_a\in\R$. Let us recall that $b_0=1$ (since $a_0=1$) and for all $k\geqslant1$, we can get the following general expression of $b_k$ (see Appendix \ref{appendix:formula_b} for more details):

\begin{equation}\label{expo_b_k}
b_k=\sum_{p=1}^k\frac{(-1)^p}{a_0^{p+1}}\left(\sum_{\substack{k_1,\dots,k_p\geqslant1\\ k_1+\dots+k_p=k}}
\prod_{i=1}^pa_{k_i}\right).
\end{equation}

A classical combinatorial argument shows that $\sharp\{(k_1,\dots,k_p)\in\N^*~|~k_1+\dots+k_p=k\}=\binom{k-1}{p-1}$.
As a consequence, when the sequence $(a_k)_{k\geqslant0}$ is defined by \eqref{eq:a_k_expo}, we can easily prove that for $k\geqslant1$, \[b_k=-C_a(1-C_a)^{k-1}e^{-\lambda k}.\]  
Hence, to satisfy $(\mathbf{H_{exp}})$, we only need $C_a$ to be such that $\zeta:=\lambda-\ln|1-C_a|>0$ and then for all $k\in\N^*$, we get
\begin{equation}\label{domination_b_k}
|b_k|\leqslant C_be^{-\zeta k}
\end{equation}
with $C_b>0$ a constant depending on $C_a$.

\begin{rem} $\rhd$ In this setting where everything is computable, it's interesting to see that the asymptotic decrease of the sequence $(|b_k|)$ is not only related to the one of the sequence $(|a_k|)$. For instance, if we take $C_a<0$, the simple fact that $a_0>0$ and $a_k<0$ for all $k>0$ makes $(b_k)$ diverge to $+\infty$ and nevertheless, $(|a_k|)$ decreases to $0$ at an exponential rate.\\
$\rhd$ If we take $C_a=1$, we can reduce $(\Delta_n)_{n\in\Z}$ to the following induction: 
$
~\forall n\in\Z,~\Delta_{n+1}=\xi_{n+1}+e^{-\lambda}\Delta_n.$
\end{rem}

Let us finally consider finite moving averages, i.e. when $a_k=0$ for all $k> m$ (for some $m\geqslant1$). In this setting, one can expect $(\mathbf{H_{exp}})$ to be satisfied since the memory is finite. This is actually the case
when the finite moving average is invertible, namely: $P(\lambda):=1+\sum_{k=1}^ma_k\lambda^k$ has all its roots outside the unit circle (see \cite{brockwell2013time} Theorem 3.1.2).
In that case, there exists $\lambda\in\C$ such that $|\lambda|>1$ and $\frac{1}{P(\lambda)}=\sum_{k=0}^{+\infty}b_k\lambda^k<+\infty$ (to get more details on this equality, see Appendix \ref{appendix:formula_b}). Then, there exists $C>0$ such that for all $k\geqslant0$,$$|b_k|\leqslant C\left(\frac{1}{|\lambda|}\right)^k$$ and finally $(\mathbf{H_{exp}})$ holds true.\\
When the invertibility is not fulfilled, the situation is more involved but $(\mathbf{H_{exp}})$ is still true up to another Wold decomposition. More precisely, one can find another white noise $\xi$ and another set of coefficients $a_k$ such that the invertibility holds true, on this topic see e.g. \cite{brockwell2013time} Proposition 4.4.2.

\subsection{Polynomial case: from a general class of examples to the fractional case}\label{subsection:fractional_sequence}

A natural example of Gaussian sequence $(\Delta_n)_{n\in\Z}$ which leads to polynomial memory is to choose $a_k=(k+1)^{-\rho}$ for a given $\rho>1/2$. In that case, we have the following result.

\begin{propo}\label{propo:polynomial_case} Assume $(\mathbf{H_1})$ and $(\mathbf{H_2})$. Let $\rho>1/2$. If for all $k\geqslant0$, $a_k=(k+1)^{-\rho}$, then we have $|b_k|\leqslant(k+1)^{-\rho}$. Moreover, if $\rho>3/4$ Theorem \ref{thm:principal} (ii) holds with the rate 
\[v(\rho,\rho)=\left\{\begin{array}{ll}
2(\rho-3/4)^2&\text{ if }\quad \rho\in(3/4,1]\\
\frac{1}{2}(\rho-1/2)^2&\text{ if }\quad \rho>1.
\end{array}\right.\]
\end{propo}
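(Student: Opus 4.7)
The plan has two independent parts: first, establish the pointwise inequality $|b_k| \leq (k+1)^{-\rho}$ for the coefficients of the inverse Toeplitz operator; second, evaluate the supremum $v(\rho,\rho)$ to apply Theorem~\ref{thm:principal}(ii).

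For the bound on $b_k$, I would proceed by strong induction on $k$ using the recursion $b_0 = 1$ and $b_k = -\sum_{l=1}^{k} a_l b_{k-l}$ from \eqref{eq:def_b}. The base case $b_1 = -a_1 = -2^{-\rho}$ saturates the claim with equality. The natural way to carry the induction is through the stronger assertion that $b_k \leq 0$ for every $k \geq 1$: if this sign property holds, the recursion rewrites as $b_k = -a_k + \sum_{l=1}^{k-1} a_l |b_{k-l}|$, and the desired bound $|b_k| = a_k - \sum_{l=1}^{k-1} a_l |b_{k-l}| \leq a_k = (k+1)^{-\rho}$ then follows immediately. Thus everything reduces to the convolution inequality $\sum_{l=1}^{k-1} a_l |b_{k-l}| \leq a_k$. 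A direct triangle-inequality argument using only $|b_j| \leq a_j$ is insufficient---for $\rho$ close to $1/2$, $\sum_l a_l a_{k-l}$ is of order $k^{1-2\rho}$, which exceeds $a_k \sim k^{-\rho}$---so one must exploit the sharper recursive identity $|b_j| = a_j - \sum_{m=1}^{j-1} a_m |b_{j-m}|$ inherited from the previous step to recover the needed cancellations in the convolution.

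For the rate, substituting $\beta = \rho$ turns the condition $\rho + \beta > 3/2$ of Theorem~\ref{thm:principal}(ii) into $\rho > 3/4$, and it remains to compute
\[
v(\rho,\rho) = \sup_{\alpha \in (\tfrac{1}{2}\vee(\tfrac{3}{2}-\rho),\,\rho)} \min\{1,\, 2(\rho-\alpha)\}\bigl(\min\{\alpha,\, \rho,\, \alpha+\rho-1\}-\tfrac{1}{2}\bigr).
\]
Since $\alpha < \rho$, one has $\min\{\alpha,\rho\} = \alpha$; and $\min\{\alpha, \alpha+\rho-1\}$ equals $\alpha+\rho-1$ when $\rho \leq 1$ and $\alpha$ when $\rho > 1$. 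For $\rho \in (3/4,1]$ the admissible interval is $(3/2-\rho,\rho)$, on which $2(\rho-\alpha) < 1$ throughout, so one maximizes $2(\rho-\alpha)(\alpha+\rho-3/2)$; differentiating yields the critical point $\alpha^{\star} = 3/4$ with value $2(\rho-3/4)^2$. For $\rho > 1$ the admissible interval is $(1/2,\rho)$ and the objective is $\min\{1,\,2(\rho-\alpha)\}(\alpha-1/2)$; splitting at $\alpha = \rho - 1/2$ and optimizing the quadratic piece $2(\rho-\alpha)(\alpha-1/2)$ produces the interior critical point $\alpha^{\star} = \rho/2+1/4$ with value $(\rho-1/2)^2/2$.

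The heart of the proof is the combinatorial convolution inequality underlying the sign claim for $b_k$; by contrast, the rate computation is a routine one-variable optimization.
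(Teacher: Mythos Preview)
Your overall architecture matches the paper exactly: reduce the bound on $|b_k|$ to the sign claim $b_k\leqslant 0$ for $k\geqslant 1$, then read off $|b_k|=-b_k=\sum_{l=1}^{k-1}a_l b_{k-l}+a_k\leqslant a_k$ from the recursion. The rate computation is also in line with the paper's stated formulas.

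The gap is in the sign claim itself. You correctly isolate the convolution inequality $\sum_{l=1}^{k-1}a_l|b_{k-l}|\leqslant a_k$ as the crux and correctly observe that the crude bound $|b_j|\leqslant a_j$ is not enough. But your proposed remedy---substitute the sharper identity $|b_j|=a_j-\sum_m a_m|b_{j-m}|$ and hope for cancellations---is not a proof, and it is not the mechanism the paper uses. The paper's key observation (see the Remark following the Proposition and Appendix~\ref{appendix:a_log_convex}) is that $a_k=(k+1)^{-\rho}$ is \emph{log-convex}: $a_k^2\leqslant a_{k-1}a_{k+1}$, equivalently $k(k+2)\leqslant(k+1)^2$. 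With this structural hypothesis in hand, the sign claim $b_n\leqslant 0$ follows by the strong-induction argument of Ford--Savostyanov--Zamarashkin (Theorem~4 of \cite{ford2014decay}), which the paper simply cites. Log-convexity is precisely the ratio monotonicity that converts the recursive cancellations you allude to into an honest inequality; without naming it, your induction has no engine. You should either reproduce the Ford et al.\ argument under the log-convexity hypothesis or at least identify log-convexity as the missing structural ingredient.

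One minor caveat on the optimisation: for $\rho>1$ your interior critical point $\alpha^\star=\rho/2+1/4$ lies in the quadratic range $(\rho-1/2,\rho)$ only when $\rho<3/2$; you should check this and compare with the boundary value $\rho-1$ from the linear piece (the paper's formula is stated for all $\rho>1$, but the fractional applications in Subsection~\ref{subsection:fractional_sequence} live in $\rho\in(1/2,3/2)$ anyway).
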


\begin{rem}
The main novelty here comes from the proof of the inequality $|b_k|\leqslant(k+1)^{-\rho}$ for all $k\geqslant0$ which is outlined in Appendix \ref{appendix:a_log_convex} and which is based on results of \cite{ford2014decay}. The key argument in this proof is the log-convexity property of the sequence $(a_k)_{k\in\N}$, which means that for all $k\in\N$, $a_k\geqslant0$ and for $k\geqslant1$, $a_k^2-a_{k-1}a_{k+1}\leqslant0$. 
\end{rem}

With Proposition \ref{propo:polynomial_case} in hand, the purpose of the remainder of this section is to
focus on Gaussian sequences of \textit{fractional type}, i.e. when the sequence $(a_k)_{k\geqslant0}$ satisfies:
\begin{equation}\label{eq:fractional_type_a}
\forall k\geqslant0,\quad|a_k|\leqslant C_\rho(k+1)^{-\rho}\quad\text{ and }\quad|a_k-a_{k+1}|\leqslant\tilde{C}_\rho(k+1)^{-(\rho+1)}
\end{equation}
with $\rho:=3/2-H$ and $H\in(0,1)$ is the so-called Hurst parameter. In particular, through this class of sequences, we provide an explicit example which shows that computing the rate of convergence of the sequence $(b_k)_{k\geqslant0}$ is a hard task and strongly depends on the variations of $(a_k)_{k\geqslant0}$. Condition \eqref{eq:fractional_type_a} includes both cases of Proposition \ref{propo:polynomial_case} with $\rho:=3/2-H$ and when $(\Delta_n)_{n\in\Z}$ corresponds to the fractional Brownian motion (fBm) increments (as we will see below), we therefore decided to use the terminology ``\textit{fractional type}''.
%above all to provide an explicit example which shows that computing the rate of convergence of the sequence $(b_k)_{k\geqslant0}$ is a hard task and strongly depends on the variations of $(a_k)_{k\geqslant0}$. To this end, we  This condition includes both cases of Proposition \ref{propo:polynomial_case} with $\rho:=3/2-H$ and when $(\Delta_n)_{n\in\Z}$ corresponds to the fractional Brownian motion (fBm) increments (as we will see below), we therefore decided to use the terminology ``\textit{fractional type}''.}
Recall that a $d$-dimensional fBm with Hurst parameter $H\in(0,1)$ is a centered Gaussian process $(B_t)_{t\geqslant0}$ with stationary increments satisfying
\begin{equation*}
\forall t,s\geqslant0,~\forall i,j\in\{1,\dots,d\},\quad\E\left[(B^i_t-B^i_s)(B^j_t-B^j_s)\right]=\delta_{ij}|t-s|^{2H}.
\end{equation*}
%The study by a coupling argument of the rate of convergence to equilibrium for this kind of dynamics has been undertaken by Hairer \cite{hairer2005ergodicity}, Fontbona and Panloup \cite{fontbona2017rate}, Deya, Panloup and Tindel \cite{deya2016rate}, respectively in the additive noise, multiplicative noise with $H>1/2$ and multiplicative noise with $H\in(1/3,1/2)$. 
In our discrete-time setting, we are thus concerned by the long time behavior of \eqref{SDS} if we take for $h>0$
\begin{equation}
(\Delta_{n})_{n\in\Z}=(B_{nh}-B_{(n-1)h})_{n\in\Z}
\end{equation}
which is a stationary Gaussian sequence. It can be realized through a moving average representation with coefficients $(a^H_k)_{k\geqslant0}$ defined by (see \cite{ostry2006synthesis}):
\begin{equation}
a^H_0=h^{H}\kappa(H)2^{1/2-H} \quad \text{ and for }k\geqslant1,\quad a^H_k=h^{H}\kappa(H)\left(\left(k+\frac{1}{2}\right)^{H-1/2}-\left(k-\frac{1}{2}\right)^{H-1/2}\right)
\end{equation}
where \[\kappa(H)=\frac{\sqrt{\sin (\pi H)\Gamma(2H+1)}}{\Gamma(H+1/2)}.\]

One can easily check that $a^H_k\underset{k\to+\infty}{\sim}C_{h,H}(k+1)^{-(3/2-H)}$ and $|a^H_k-a^H_{k+1}|\leqslant C'_{h,H}(k+1)^{-(5/2-H)}$.\\ Hence $(a_k^H)_{k\geqslant0}$ is of \textit{fractional type} in the sense of \eqref{eq:fractional_type_a}.
Now, the question is: how does the corresponding $(b^H_k)$ behave ? When $H$ belongs to $(0,1/2)$, only $a_0^H$ is positive and then $(a^H_k)$ is not log-convex. Therefore, we cannot use this property to get the asymptotic behavior of $(b^H_k)$ as we did in Proposition \ref{propo:polynomial_case}. However, thanks to simulations (see Figure \ref{fig:b_mbf_H1} and \ref{fig:b_mbf_H3}), we conjectured and we proved  Proposition \ref{propo:b_rate}.

\begin{propo}\label{propo:b_rate}
There exists $C''_{h,H}>0$ such that for all $H\in(0,1/2)$
\begin{equation}\label{eq:rate_b_fbm}
\forall k\geqslant0,\quad |b^H_k|\leqslant C''_{h,H}(k+1)^{-(H+1/2)}.
\end{equation}
Then, if we assume $(\mathbf{H_1})$ and $(\mathbf{H_2})$, Theorem \ref{thm:principal} (ii) holds with the rate 
\[v(\rho,2-\rho)=v(3/2-H,H+1/2)=\left\{\begin{array}{ll}
H(1-2H)&\text{ if }\quad H\in(0,1/4]\\
\frac{1}{8}&\text{ if }\quad H\in(1/4,1/2).
\end{array}\right.\]
\end{propo}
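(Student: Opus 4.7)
The proposition splits into two largely independent pieces: the tail bound $|b^H_k|\leqslant C''_{h,H}(k+1)^{-(H+1/2)}$ for the inverse Toeplitz coefficients, and the evaluation of the rate exponent promised by Theorem~\ref{thm:principal}\,(ii). The second is a piecewise optimisation in one variable; the first is the genuinely delicate part, and I would approach it through generating functions.

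The key observation is that, for $k\geqslant 1$, $a^H_k$ is a telescoping difference, which promotes to a clean factorisation of the associated series $A(z):=\sum_{k\geqslant 0} a^H_k z^k$. A short Abel-type rearrangement using $a^H_0=h^H\kappa(H)(1/2)^{H-1/2}$ yields
\[A(z)=h^H\kappa(H)\,(1-z)\,\Phi(z),\qquad \Phi(z):=\sum_{k\geqslant 0}\bigl(k+\tfrac12\bigr)^{H-1/2}z^k.\]
Since the coefficients of $\Phi$ are equivalent to $k^{-(1/2-H)}$ with exponent in $(0,1/2)$, classical singularity analysis of Flajolet--Odlyzko type gives $\Phi(z)\sim \Gamma(H+1/2)(1-z)^{-(H+1/2)}$ in a suitable $\Delta$-domain around $z=1$, and therefore $A(z)\sim h^H\kappa(H)\Gamma(H+1/2)(1-z)^{1/2-H}$.

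To transfer this information to $B(z):=1/A(z)$, one needs $A$ not to vanish on $\overline{\mathbb{D}}\setminus\{1\}$. On the unit circle this follows from the spectral identity $|A(e^{-i\lambda})|^2=2\pi f_H(\lambda)$ together with the positivity of the fGn spectral density away from zero; inside the open disk it follows from the fact that $A$ is the outer factor associated with $f_H$ through Wold's theorem. Hence $z=1$ is the unique singularity of $B$ on $\overline{\mathbb{D}}$, with $B(z)\sim(h^H\kappa(H)\Gamma(H+1/2))^{-1}(1-z)^{H-1/2}$. A second application of the transfer theorem then yields $b^H_k\sim C''_{h,H}\,k^{-(H+1/2)}$, and in particular the desired bound. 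The main technical obstacle is converting the heuristic near $z=1$ into a genuine coefficient estimate with an explicit constant, that is, establishing $\Delta$-analyticity of $B$ and uniformity of the local asymptotics of $\Phi$; a self-contained alternative is to set $\Psi:=1/\Phi$, note that $\Psi(1)=0$, and use $b^H_k=-(h^H\kappa(H))^{-1}\sum_{\ell>k}\psi_\ell$ together with a direct $O(\ell^{-(H+3/2)})$ bound on $\psi_\ell$ obtained by induction on the recursion defining $\Psi$.

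Once the decay of $(b^H_k)$ is in hand, hypothesis $(\mathbf{H_{poly}})$ holds with $\rho=3/2-H$ and $\beta=H+1/2$ (the discrete-derivative bound $|a^H_k-a^H_{k+1}|\lesssim(k+1)^{-(5/2-H)}$ coming from a direct Taylor expansion), and $\rho+\beta=2>3/2$, so Theorem~\ref{thm:principal}\,(ii) applies. It remains to maximise
\[F(\alpha):=\min\{1,\,2(3/2-H-\alpha)\}\bigl(\min\{\alpha,\,H+1/2,\,\alpha+H-1/2\}-1/2\bigr)\]
over $\alpha\in(\max\{1/2,\,1-H\},\,3/2-H)=(1-H,\,3/2-H)$, where the simplification uses $H<1/2$. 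Throughout this interval the first factor reduces to $3-2H-2\alpha$. For $\alpha\in(1-H,1]$ the inner minimum is $\alpha+H-1/2$, making $F$ a concave parabola whose vertex $\alpha^*=5/4-H$ lies in the subinterval iff $H\geqslant 1/4$, with $F(\alpha^*)=1/8$; when $H<1/4$ the parabola is increasing on $(1-H,1]$ and the supremum is the limit value $F(1)=H(1-2H)$. For $\alpha\in[1,3/2-H)$ the inner minimum equals $H+1/2$, so $F(\alpha)=H(3-2H-2\alpha)$ is linear decreasing with supremum $H(1-2H)$ at $\alpha=1$. Combining the two regimes gives $v=1/8$ for $H\in[1/4,1/2)$ and $v=H(1-2H)$ for $H\in(0,1/4]$, as claimed.
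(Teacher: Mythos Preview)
Your evaluation of the rate $v(3/2-H,H+1/2)$ is correct and matches the paper exactly: once $(\mathbf{H_{poly}})$ holds with $\rho=3/2-H$, $\beta=H+1/2$, the one-variable optimisation is routine and you carry it out cleanly.

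For the tail bound on $(b^H_k)$, your approach is \emph{genuinely different} from the paper's. The factorisation $A(z)=h^H\kappa(H)(1-z)\Phi(z)$ with $\Phi(z)=\sum_{k\geqslant0}(k+1/2)^{H-1/2}z^k$ is a nice structural observation that the paper does not exploit; it explains conceptually why $b^H_k$ should decay like $k^{-(H+1/2)}$ via the singularity $(1-z)^{H-1/2}$ of $B=1/A$. By contrast, the paper's proof is entirely real-variable and self-contained: it sets up the induction hypothesis $|b_n|\leqslant C_b(n+1)^{-(2-\rho)}$ and reduces the inductive step to proving, for $n$ large, the sharp inequality
\[
S_n:=\sum_{k=1}^n\bigl((2k-1)^{1-\rho}-(2k+1)^{1-\rho}\bigr)(n+1-k)^{-(2-\rho)}\leqslant (n+1)^{-(2-\rho)}.
\]
This is done by splitting $S_n$ at $n/2$, applying summation by parts to the first half, comparing the resulting sums with explicit integrals (e.g.\ $\int_0^{1/2}(x+\tfrac{1}{2n})^{1-\rho}(1-x)^{-(3-\rho)}dx$), and controlling the second half via a Taylor--Lagrange expansion of the integrand. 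The final asymptotic expansion shows the ratio $S_n/(n+1)^{-(2-\rho)}$ is $1-\frac{2^{1-\rho}(\rho-1)}{4}n^{2-2\rho}+o(1/n)<1$ for large $n$. No complex analysis, no spectral input.

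That said, your proposal leaves the hardest step open, and you acknowledge this. The transfer-theorem route requires $\Delta$-analyticity of $B$ near $z=1$, which in turn needs analytic continuation of $1/\Phi$ beyond the unit circle in a sector---not immediate from the polylog asymptotics alone. Your alternative via $\Psi=1/\Phi$ and $b^H_k=-(h^H\kappa(H))^{-1}\sum_{\ell>k}\psi_\ell$ is correct in spirit, but the crucial bound $\psi_\ell=O(\ell^{-(H+3/2)})$ is precisely the difficult part. Note that $(\phi_k)=((k+1/2)^{H-1/2})$ is log-convex, so Lemma~\ref{lem:toeplitz} does apply and gives $|\psi_k|\leqslant\psi_0\phi_k=O(k^{-(1/2-H)})$; but this is far weaker than $O(k^{-(H+3/2)})$, and closing that gap by induction would require sharp inequalities of the same flavour as the paper's computation on $S_n$. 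In short: your generating-function viewpoint is illuminating and potentially more robust, but as written it defers rather than resolves the technical core that the paper attacks head-on.
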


\begin{rem} For the sake of conciseness, we provided the details of the proof in Appendix \ref{appendix:proof_propo_b_rate}. The proof of \eqref{eq:rate_b_fbm} is based on a technical induction which involves very sharp inequalities.
\end{rem}
In Proposition \ref{propo:polynomial_case} (with $\rho=3/2-H$ and $H\in(0,1/2)$) and in the above proposition, dealing with the same order of memory, we get really different orders of rate of convergence: one easily checks that for all $H\in(0,1/2)$, $v(3/2-H,~H+1/2)<v(3/2-H,~3/2-H)$. Finally, we have seen that managing the asymptotic behavior of $(b_k)$ is both essential and a difficult task.\\

To end this section, let us briefly discuss on the specific statements on fBm increments and compare with the continuous time setting (see \cite{hairer2005ergodicity,fontbona2017rate,deya2016rate}). For this purpose, we introduce the following conjecture (based on simulations, see Figure \ref{fig:b_mbf_H6} and \ref{fig:b_mbf_H9}) when $H$ belongs to $(1/2,1)$:\\

\underline{\textbf{Conjecture:}}\quad There exists $C''_{h,H}>0$ and $\beta_H>1$ such that
\begin{equation}\label{eq:conjecture}
\forall k\geqslant0,\quad |b^H_k|\leqslant C''_{h,H}(k+1)^{-\beta_H}.
\end{equation}

\begin{rem} 
We do not have a precise idea of the expression of $\beta_H$ with respect to $H$. But, we can note that if $\rho<1$ and $\beta>1$ in $(\mathbf{H_{poly}})$, then the rate of convergence in Theorem \ref{thm:principal} is $v(\rho,\beta)=\frac{(2\rho-1)^2}{8}$ and does not depend on $\beta$. Hence, if $H\in(1/2,1)$, $\rho=3/2-H$ and $\beta_H>1$, we fall into this case and then the dependence of $\beta_H$ in terms of $H$ does not matter.
\end{rem}
If the previous conjecture is true we get the following rate of convergence for $H\in(1/2,1)$ in Theorem \ref{thm:principal}:
\[v(\rho,\beta_H)=v(3/2-H,\beta_H)=\frac{(1-H)^2}{2}.\]

\begin{center}
\begin{figure}[!ht]
    \centering
    \begin{subfigure}[h]{0.4\textwidth}
        \includegraphics[width=\textwidth]{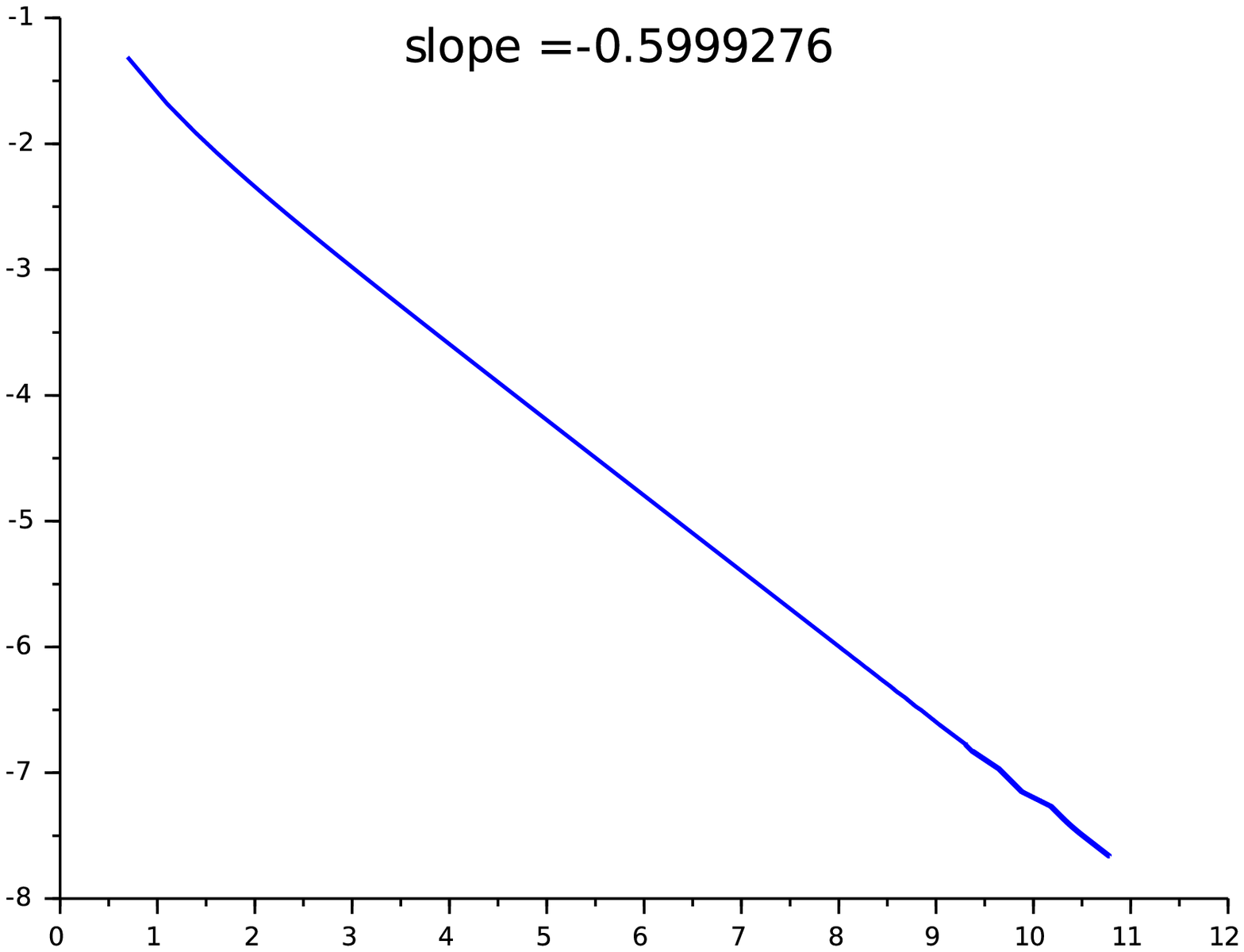}
        \caption{parameter: $H=0.1$}
        \label{fig:b_mbf_H1}
    \end{subfigure}
    \begin{subfigure}[h]{0.4\textwidth}
        \includegraphics[width=\textwidth]{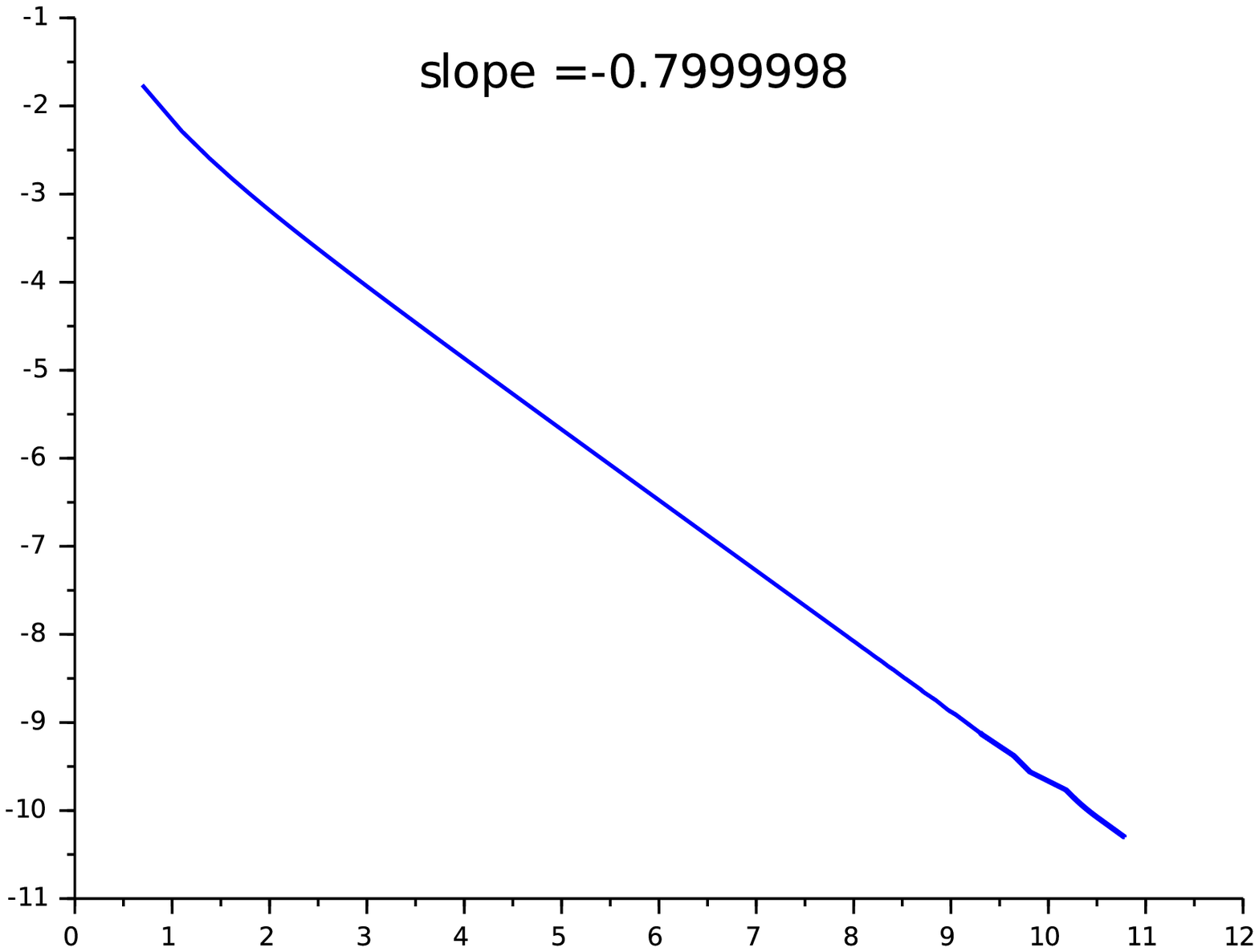}
        \caption{parameter: $H=0.3$}
        \label{fig:b_mbf_H3}
    \end{subfigure}
    \begin{subfigure}[h]{0.4\textwidth}
        \includegraphics[width=\textwidth]{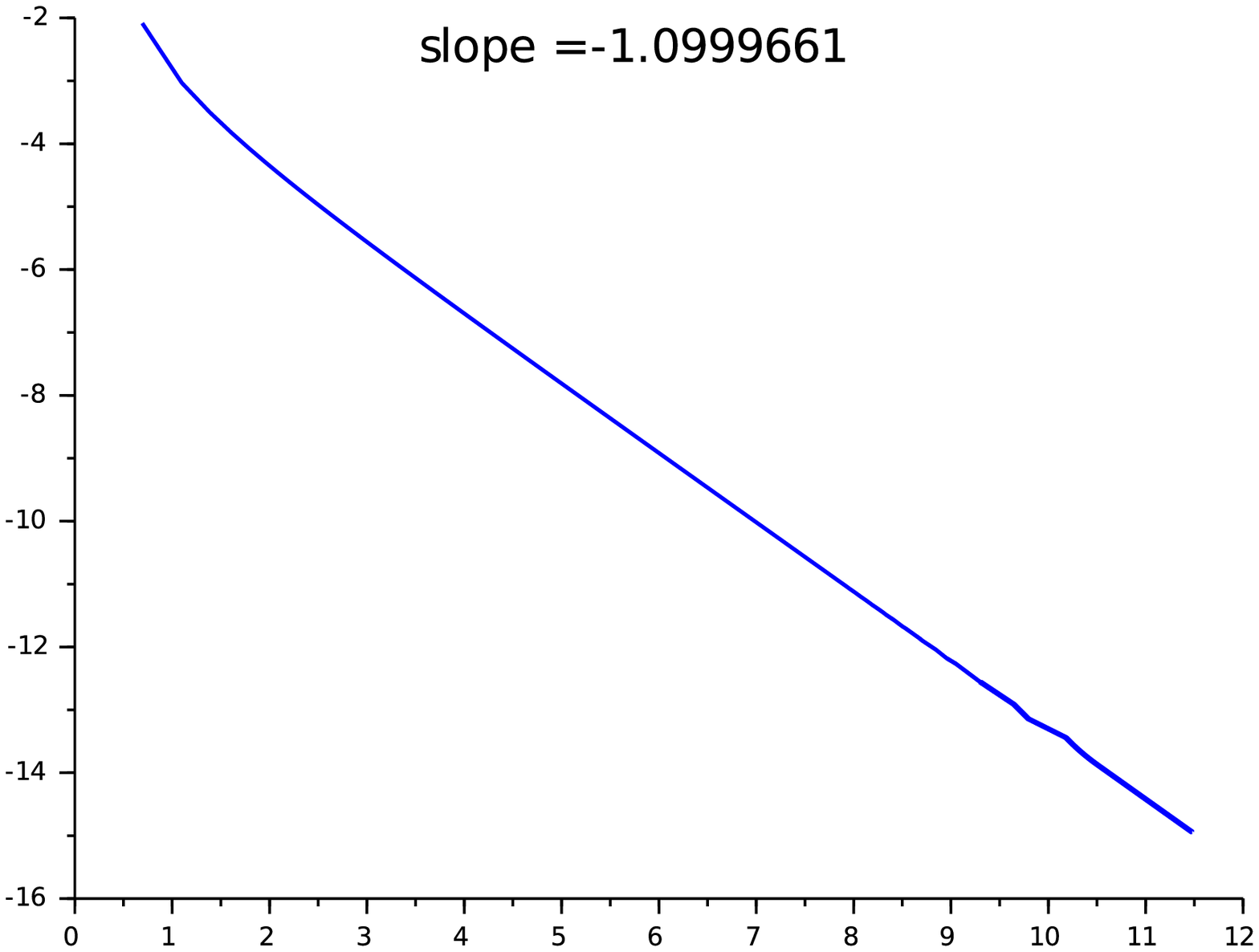}
        \caption{parameter: $H=0.6$}
        \label{fig:b_mbf_H6}
    \end{subfigure}
    \begin{subfigure}[h]{0.4\textwidth}
        \includegraphics[width=\textwidth]{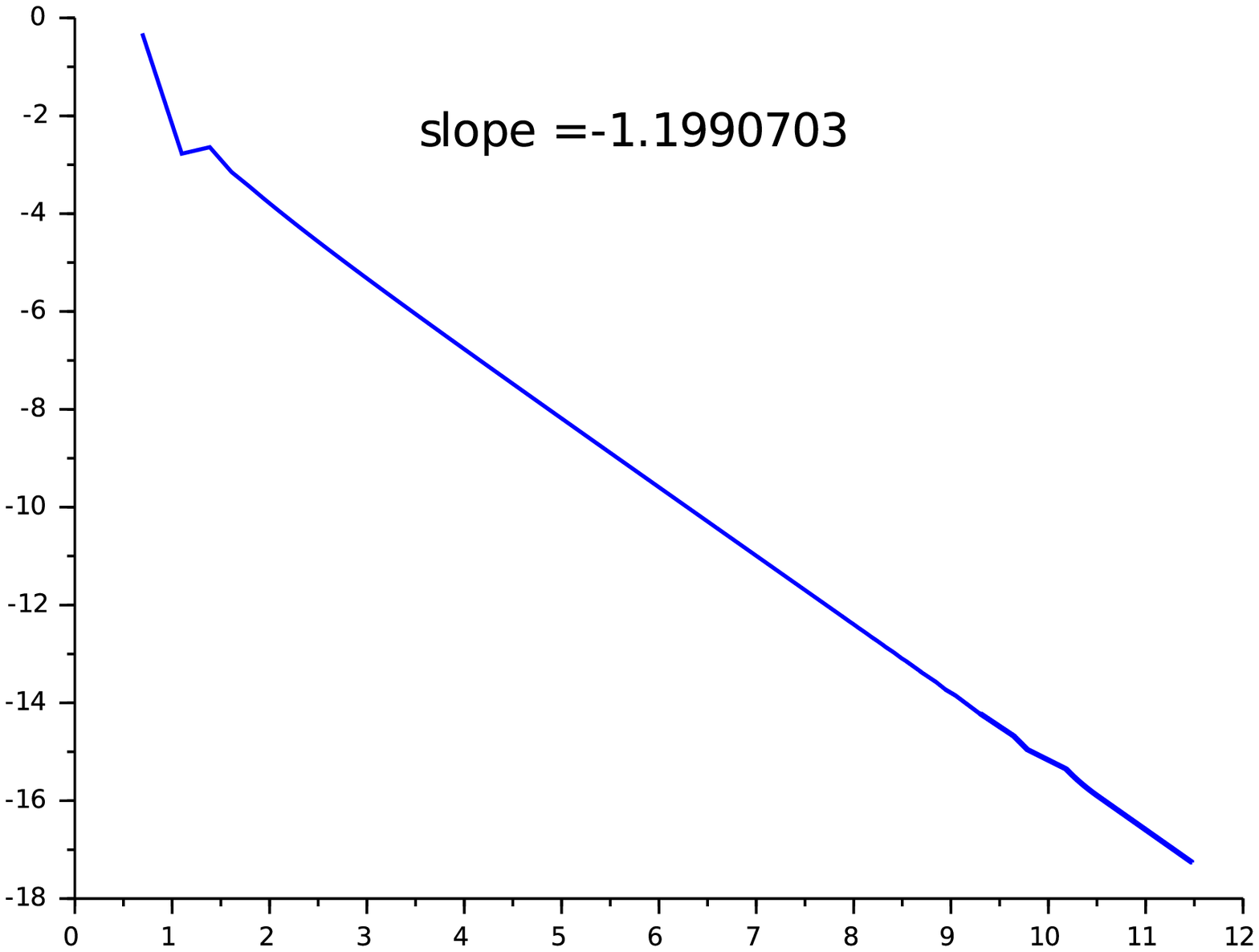}
        \caption{parameter: $H=0.9$}
        \label{fig:b_mbf_H9}
    \end{subfigure}
    \caption{$(\log|b^H_k|)$ according to $(\log(k+1))$ with different Hurst parameters $H$.}\label{fig:mBf}
\end{figure}
\end{center}

Then, when $H$ belongs to $(0,1/2)$ Proposition \ref{propo:b_rate} gives exactly the same rate of convergence obtained in \cite{hairer2005ergodicity,deya2016rate}. However, when $H>1/2$ it seems that we will get a smaller rate than in a continuous time setting. The reason for this may be that Theorem \ref{thm:principal} is a result with quite general hypothesis on the Gaussian noise process $(\Delta_n)_{n\in\Z}$. In the case of fBm increments, the moving average representation is explicit. Hence, we may use a more specific approach and significantly closer to Hairer's, especially with regard to Step 2 in the coupling method (see Subsection \ref{subsection:step2}) by not exploiting the technical lemma \ref{lem:technical} for instance. This seems to be a right track in order to improve our results on this precise example.\\

\quad We are now ready to begin the proof of Theorem \ref{thm:principal}. In Section \ref{section:existence_inv_distribution}, we establish the first part of the theorem, i.e. (i). Then, in Section \ref{section:gen_coupling_strategy} we explain the scheme of coupling before implementing this strategy in Sections \ref{section:coupling_under_H} and \ref{section:admissibility}. Finally, in Section \ref{section:proof_thm_principal}, we achieve the proof of (ii) and (iii) of Theorem  \ref{thm:principal}.

\section{Existence of invariant distribution}\label{section:existence_inv_distribution}

Denote by $\PP_w$ the law of $(\Delta_k)_{k\leqslant0}$. Since $(\Delta_n)_{n\in\Z}$ is stationary we immediately get the following property:

\begin{prop} \label{prop:stability}
If a measure $\mu\in\mathcal{M}_1(\X\times\W)$ is such that $\Pi^*_\W\mu=\PP_w$ then 
$\Pi^*_\W\mathcal{Q}\mu=\PP_w$.
\end{prop}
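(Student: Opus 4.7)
The plan is to verify the identity by testing against bounded measurable functions $g:\W\to\R$ and exploiting both the definition of $\mathcal{Q}$ and the stationarity of $(\Delta_n)_{n\in\Z}$.

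First, I would unfold the definition of the pushforward: for a bounded measurable $g:\W\to\R$,
\[\int_\W g(w')\,\Pi^*_\W\mathcal{Q}\mu(dw')=\int_{\X\times\W}\int_{\R^d}g\bigl(\Pi_\W(\varphi((x,w),\delta))\bigr)\,\mathcal{P}(w,d\delta)\,\mu(dx,dw).\]
Since $\Pi_\W\circ\varphi((x,w),\delta)=w\sqcup\delta$ depends neither on $x$ nor on $F(x,\delta)$, the inner integrand is a function of $(w,\delta)$ only. Hence, after integrating out the $x$-variable, the hypothesis $\Pi^*_\W\mu=\PP_w$ collapses the expression to
\[\int_\W\int_{\R^d}g(w\sqcup\delta)\,\mathcal{P}(w,d\delta)\,\PP_w(dw).\]

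Next, I would recognize the right-hand side probabilistically. By definition, $\PP_w$ is the law of $(\Delta_k)_{k\leqslant0}$ and $\mathcal{P}(w,\cdot)$ is the conditional law of $\Delta_1$ given $(\Delta_k)_{k\leqslant0}=w$ (with $n=0$), so the double integral above is exactly $\E\!\left[g\bigl((\Delta_k)_{k\leqslant0}\sqcup\Delta_1\bigr)\right]$. By the very definition of the concatenation operator $\sqcup$, one has $(\Delta_k)_{k\leqslant0}\sqcup\Delta_1=(\Delta_{1+k})_{k\leqslant0}$.

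Finally, I would invoke the stationarity of $(\Delta_n)_{n\in\Z}$: the shifted sequence $(\Delta_{1+k})_{k\leqslant0}$ has the same law as $(\Delta_k)_{k\leqslant0}$, namely $\PP_w$. Therefore
\[\int_\W g(w')\,\Pi^*_\W\mathcal{Q}\mu(dw')=\E\!\left[g\bigl((\Delta_{1+k})_{k\leqslant0}\bigr)\right]=\int_\W g(w')\,\PP_w(dw'),\]
and since this holds for every bounded measurable $g$, we conclude $\Pi^*_\W\mathcal{Q}\mu=\PP_w$.

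There is essentially no obstacle here: the statement is a direct reformulation of stationarity via the Markovian representation $\mathcal{Q}$. The only subtlety worth emphasizing is that the projection on $\W$ kills the $F(x,\delta)$-coordinate, which is precisely what allows the hypothesis $\Pi^*_\W\mu=\PP_w$ to be used without any further information on the $\X$-marginal of $\mu$.
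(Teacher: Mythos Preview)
Your proof is correct and spells out precisely the computation that the paper omits: the paper gives no proof of this property, merely stating that it follows ``immediately'' from the stationarity of $(\Delta_n)_{n\in\Z}$. Your argument is the natural way to unpack that remark, and each step (in particular the reduction to the $\W$-marginal via $\Pi_\W\circ\varphi((x,w),\delta)=w\sqcup\delta$ and the identification $(\Delta_k)_{k\leqslant0}\sqcup\Delta_1=(\Delta_{1+k})_{k\leqslant0}$) is handled correctly.
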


We can now define the notion of Lyapunov function.

\begin{dfn}\label{def:lyapunov_function}A function
$\psi:\X\to[0,+\infty)$ is called a Lyapunov function for $\mathcal{Q}$ if $\psi$ is continuous and if the following holds:
\begin{itemize}
 \item[(i)] $\psi^{-1}([0,a])$ is compact for all $a\in[0,+\infty)$.
 \vspace{2mm}
 
 \item[(ii)] $\exists \beta>0$ and $\alpha\in(0,1)$ such that:
 \[\int_{\X\times\W}\psi(x)\mathcal{Q}\mu({\rm d}x,{\rm d}w)\leqslant\beta+\alpha\int_{\X}\psi(x)(\Pi_\X^*\mu)({\rm d}x)\]
 for all $\mu\in\mathcal{M}_1(\X\times\W)$ such that $\Pi_\W^*\mu=\PP_w$ and
$\int_{\X}\psi(x)(\Pi_\X^*\mu)({\rm d}x)<+\infty$.
\end{itemize}
\end{dfn}

The following result ensures the existence of invariant distribution for $\mathcal{Q}$.

\begin{thm}\label{thm:existence_invariant_dist}
If there exists a Lyapunov function $\psi$ for $\mathcal{Q}$, then $\mathcal{Q}$ has at least one invariant distribution $\mu_\star$, in other words $\mathcal{Q}\mu_\star=\mu_\star$. 
\end{thm}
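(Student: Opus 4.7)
The plan is to prove the theorem by the classical Krylov--Bogolyubov procedure adapted to the Feller kernel $\mathcal{Q}$ on $\X\times\W$, with the Lyapunov function $\psi$ playing the role of the tightness device on the unbounded $\X$-direction.

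First I would pick a convenient starting measure, e.g.\ $\mu_0=\delta_{x_0}\otimes\PP_w$ for some fixed $x_0\in\X$, so that $\Pi^*_\W\mu_0=\PP_w$ and $\int_\X\psi\,d(\Pi^*_\X\mu_0)=\psi(x_0)<+\infty$. By Proposition \ref{prop:stability}, applied iteratively, one has $\Pi^*_\W\mathcal{Q}^k\mu_0=\PP_w$ for every $k\geqslant0$. This is exactly the hypothesis needed to invoke Definition \ref{def:lyapunov_function}(ii) at each step, which yields the recursion
\[\int_\X\psi\,d(\Pi^*_\X\mathcal{Q}^{k+1}\mu_0)\leqslant\beta+\alpha\int_\X\psi\,d(\Pi^*_\X\mathcal{Q}^k\mu_0),\]
and therefore the uniform bound $\sup_{k\geqslant0}\int_\X\psi\,d(\Pi^*_\X\mathcal{Q}^k\mu_0)\leqslant\psi(x_0)+\beta/(1-\alpha)=:M$.

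Next I would form the Cesàro averages $\mu_n:=\frac{1}{n}\sum_{k=0}^{n-1}\mathcal{Q}^k\mu_0$ and prove tightness of the sequence $(\mu_n)_{n\geqslant1}$ on $\X\times\W$. The $\W$-marginal is constantly equal to $\PP_w$ (still by Proposition \ref{prop:stability}), hence trivially tight since $\PP_w$ is a single Borel probability measure on the Polish space $\W$. For the $\X$-marginal, the uniform bound above together with Markov's inequality gives $\Pi^*_\X\mu_n(\{\psi>R\})\leqslant M/R$, uniformly in $n$; since $\{\psi\leqslant R\}$ is compact by Definition \ref{def:lyapunov_function}(i), tightness of $\Pi^*_\X\mu_n$ follows. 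Combining the two marginal tightness properties gives tightness of $(\mu_n)$ on the product space.

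Finally, I would extract a weakly convergent subsequence $\mu_{n_j}\to\mu_\star$ by Prokhorov's theorem and verify invariance. For any $g\in\mathcal{C}_b(\X\times\W)$, the Feller property of $\mathcal{Q}$ ensures that $\mathcal{Q}g$ is still continuous and bounded, so both $\int g\,d\mu_{n_j}$ and $\int\mathcal{Q}g\,d\mu_{n_j}=\int g\,d(\mathcal{Q}\mu_{n_j})$ converge to $\int g\,d\mu_\star$ and $\int\mathcal{Q}g\,d\mu_\star$ respectively. A telescoping computation gives
\[\int g\,d(\mathcal{Q}\mu_n)-\int g\,d\mu_n=\frac{1}{n}\Bigl(\int g\,d(\mathcal{Q}^n\mu_0)-\int g\,d\mu_0\Bigr),\]
whose right-hand side tends to $0$ as $n\to+\infty$ since $g$ is bounded. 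Therefore $\int\mathcal{Q}g\,d\mu_\star=\int g\,d\mu_\star$ for every $g\in\mathcal{C}_b(\X\times\W)$, which is equivalent to $\mathcal{Q}\mu_\star=\mu_\star$. The main delicate point is the tightness step: one must leverage precisely the fact that the Lyapunov bound is only available on measures whose $\W$-marginal is $\PP_w$, which is why the stability result Proposition \ref{prop:stability} is crucial; everything else is the standard Feller/Krylov--Bogolyubov machinery.
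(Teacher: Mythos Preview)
Your proposal is correct and follows essentially the same Krylov--Bogolyubov strategy as the paper's proof in Appendix~\ref{appendix:proof_thm_existence}: start from $\delta_{x_0}\otimes\PP_w$, propagate the $\W$-marginal via Property~\ref{prop:stability}, use the Lyapunov recursion to bound $\int\psi\,d(\Pi^*_\X\mathcal{Q}^k\mu_0)$ uniformly, deduce tightness of the Ces\`aro means on $\X\times\W$ from the two marginals, and conclude by Feller continuity. Your derivation of the uniform Lyapunov bound (direct iteration of the recursion) is in fact slightly cleaner than the paper's, which sums the inequalities over $k$ and uses a disintegration argument to cancel a spurious term, but the two routes are equivalent.
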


A detailed proof of this result is given in Appendix \ref{appendix:proof_thm_existence}.
Finally, we get the first part (i) of Theorem \ref{thm:principal} about the existence of an invariant distibution by setting $\psi:=V$ (with $V$ the function appearing in $(\mathbf{H_1})$) and by saying that $\psi$ is a Lyapunov function for $\mathcal{Q}$.

\section{General coupling procedure}\label{section:gen_coupling_strategy}

We now turn to the proof of the main result of the paper, i.e. Theorem \ref{thm:principal} (ii) and (iii) about the convergence in total variation. This result is based on a coupling method first introduced in \cite{hairer2005ergodicity}, but also used in \cite{fontbona2017rate} and \cite{deya2016rate}, in a continuous time framework. The coupling strategy is slightly different in our discrete context, the following part is devoted to explain this procedure.

\subsection{Scheme of coupling}\label{subsection:coupling_scheme}

Let $(\Delta_n^1)_{n\in\Z}$ and $(\Delta_n^2)_{n\in\Z}$ be two stationary and purely non-deterministic Gaussian sequences with the following moving average representations 
\[\left\{\begin{array}{c}
\Delta^1_n=\sum\limits_{k=0}^{+\infty}a_k\xi^1_{n-k}\\
\Delta^2_n =\sum\limits_{k=0}^{+\infty}a_k\xi^2_{n-k}
\end{array}\right.\]
with
\begin{align}\label{a_k}
\left\{\begin{array}{l}
              (a_k)_{k\geqslant0}\in\R^\N ~\text{ such that }~ a_0=1 ~\text{ and }~ \sum_{k=0}^{+\infty}a_k^2<+\infty \\
              \xi^i:=(\xi^i_k)_{k\in\Z} ~\text{  an i.i.d sequence such that }\xi^i_1\sim\mathcal{N}(0,I_d)\text{ for }i=1,2.
             \end{array}\right.
\end{align}
We denote by $(X^1,X^2)$ the solution of the system:
\begin{equation}\label{syst_couplage}
\left\{\begin{array}{c}
X_{n+1}^1=F(X_n^1,\Delta_{n+1}^1)\\
X_{n+1}^2=F(X_n^2,\Delta_{n+1}^2)
\end{array}\right.
\end{equation}
with initial conditions $(X^1_0,(\Delta^1_k)_{k\leqslant0})$ and $(X^2_0,(\Delta^2_k)_{k\leqslant0})$.
We assume that $(X^2_0,(\Delta^2_k)_{k\leqslant0})\sim\mu_\star$ where $\mu_\star$ denotes a fixed invariant distribution associated to \eqref{SDS}. The previous section ensures that such a measure exists.
We define the natural filtration associated to \eqref{syst_couplage} by \[(\mathcal{F}_n)_{n\in\N}=(\sigma((\xi^1_k)_{k\leqslant n},(\xi^2_k)_{k\leqslant n},X^1_0,X^2_0))_{n\geqslant0}.\]
To lower the ``weight of the past'' at the beginning of the coupling procedure, we assume that a.s,
\[(\Delta^1_k)_{k\leqslant0}=(\Delta^2_k)_{k\leqslant0}\]
which is actually equivalent to assume that a.s $(\xi^1_k)_{k\leqslant0}=(\xi^2_k)_{k\leqslant0}$ since the invertible Toeplitz operator defined in Subsection \ref{subsection:toeplitz} links $(\Delta^i_k)_{k\leqslant0}$ to $(\xi^i_k)_{k\leqslant0}$ for $i=1,2$. Lastly, we denote by 
$(g_n)_{n\in\Z}$ and $(f_n)_{n\in\Z}$ the random variable sequences defined by
\begin{equation}\label{def_f_g}
\xi_{n+1}^1=\xi_{n+1}^2+g_n\quad\text{ and }
\quad\Delta_{n+1}^1=\Delta_{n+1}^2+f_n.
\end{equation}
They respectively represent the ``drift'' between the underlying noises $(\xi^i_k)$ and the real noises $(\Delta_k^i)$. By assumption, we have $g_n=f_n=0$ for $n<0$.

\begin{rem} From the moving average representations, we deduce immediately the following relation
for all $n\geqslant0$,
\begin{equation}\label{eq:relation_f_g}
f_n=\sum_{k=0}^{+\infty}a_kg_{n-k}=\sum_{k=0}^{n}a_kg_{n-k}.
\end{equation}
\end{rem} 

The aim is now to build $(g_n)_{n\geqslant0}$ and $(f_n)_{n\geqslant0}$ in order to stick $X^1$ and $X^2$. We set 
\begin{equation*}
\tau_\infty=\inf\{n\geqslant0~|~X^1_k=X^2_k,~\forall k\geqslant n\}.
\end{equation*}

In a purely Markovian setting, when the paths coincide at time $n$ then they remain stuck for all $k\geqslant n$ by putting the same innovation into both processes. Due to the memory this phenomenon cannot happen here. Hence, this involves a new step in the coupling scheme: try to keep the paths fathened together (see below). \\
Recall that $\mathcal{L}((X^2_k)_{k\geqslant n})=\mathcal{S}\mu_\star$. The purpose of the coupling procedure is to bound the quantity $\PP(\tau_\infty>n)$ since by a classical result we have 
\begin{equation} \|\mathcal{L}((X^1_k)_{k\geqslant n}) -\mathcal{S}\mu_\star\|_{TV}\leqslant \PP(\tau_\infty>n).
\end{equation}

Hence, we realize the coupling after a series of trials which follows three steps:

\begin{itemize}
 \item[$\ast$] \textbf{Step 1}: Try to stick the positions at a given time with a ``controlled cost''.
 \vspace{2mm}
 
 \item[$\ast$] \textbf{Step 2}: (specific to non-Markov processes) Try to keep the paths fastened together.
 
 \vspace{2mm}
 
 \item[$\ast$] \textbf{Step 3}: If Step 2 fails, we wait long enough so as to allow Step 1 to be realized with a ``controlled cost'' and with a positive probability. During this step, we assume that $g_n=0$.

\end{itemize}

More precisely, let us introduce some notations,

\begin{itemize}
 \item[$\bullet$] Let $\tau_0\geqslant0$. We begin the first trial at time $\tau_0+1$, in other words we try to stick $X^1_{\tau_0+1}$ and $X^2_{\tau_0+1}$. Hence, we assume that 
 \begin{equation}\label{eq:drift_before_coupling}
 \forall n<\tau_0,\quad g_n=f_n=0.
 \end{equation}
 \item[$\bullet$] For $j\geqslant1$, let $\tau_{j}$ denote the end of trial $j$. More specifically,
 \begin{itemize}
  \item[$\rhd$] If $\tau_{j}=+\infty$ for some $j\geqslant1$, it means that the coupling tentative has been successful.
  \item[$\rhd$] Else, $\tau_{j}$ corresponds to the end of Step 3, that is $\tau_{j}+1$ is the beginning of Step 1 of trial $j+1$.
 \end{itemize}
\end{itemize}

The real meaning of ``controlled cost'' will be clarified on Subsection \ref{subsection:adm_condition}. But the main idea is that at Step 1 of trial $j$, the ``cost'' is represented by the quantity $g_{\tau_{j-1}}$ that we need to build to get $X^1_{\tau_{j-1}+1}=X^2_{\tau_{j-1}+1}$ with positive probability. Here the cost does not only depend on the positions at time $\tau_{j-1}$ but also on all the past of the underlying noises $\xi^1$ and $\xi^2$. Hence, we must have a control on $g_{\tau_{j-1}}$ in case of failure and to this end we have to wait enough during Step 3 before beginning a new attempt of coupling.

\subsection{Coupling lemmas to achieve Step 1 and 2}\label{subsection:coupling_lem}

This section is devoted to establish coupling lemmas in order to build $(\xi^1,\xi^2)$ during Step 1 and  Step 2. 
\subsubsection{Hitting step}\label{subsubsection:hitting_step_lem}

If we want to stick $X^1$ and $X^2$ at time $n+1$, we need to build $(\xi^1_{n+1},\xi^2_{n+1})$ in order to get $F(X^1_{n},\Delta^1_{n+1})=F(X^2_{n},\Delta^2_{n+1})$ with positive probability, that is to get 
\begin{align}\label{eq:hitting_step}
F\left(X^1_n,~\xi^1_{n+1}+\sum_{k=1}^{+\infty}a_k\xi^1_{n+1-k}\right)&=F\left(X^2_n,~\xi^2_{n+1}+\sum_{k=1}^{+\infty}a_k\xi^2_{n+1-k}\right)\nonumber\\
\Longleftrightarrow\quad\tilde{F}\left(X^1_n,~\xi^1_{n+1},~\sum_{k=1}^{+\infty}a_k\xi^1_{n+1-k}\right)&=\tilde{F}\left(X^2_n,~\xi^2_{n+1},~\sum_{k=1}^{+\infty}a_k\xi^2_{n+1-k}\right).
\end{align}
The following lemma will be the main tool to achieve this goal.

\begin{lem}\label{lem:coupling_lem_step1} 
Let $K>0$ and $\mu:=\mathcal{N}(0,I_d)$.
Under the controlability assumption $(\mathbf{H_2})$, there exists $\tilde{K}>0$ (given by $(\mathbf{H_2})$), such that for every $\mathbf{x}:=(x,x',y,y')$ in $B(0,K)^4$, we can build a random variable $(Z_1,Z_2)$ with values in $(\R^d)^2$ such that 
\begin{itemize}
\item[(i)] $\mathcal{L}(Z_1)=\mathcal{L}(Z_2)=\mu$,
\item[(ii)] there exists $\delta_{\tilde{K}}>0$ depending only on $\tilde{K}$ such that
\begin{equation}\label{eq1:coupling_lem_step1}
\PP(\tilde{F}(x,Z_1,y)=\tilde{F}(x',Z_2,y'))\geqslant\PP(Z_2=\Lambda_{\mathbf{x}}(Z_1),|Z_1|\leqslant \tilde{K})\geqslant\delta_{\tilde{K}}>0
\end{equation}
where $\Lambda_{\mathbf{x}}$ is the function given by hypothesis $(\mathbf{H_2})$,
\item[(iii)] there exists $M_{K}>0$ given by $(\mathbf{H_2})$ depending only on $K$ such that
\begin{equation}\label{eq2:coupling_lem_step1}
\PP(|Z_2-Z_1|\leqslant M_{K})=1.
\end{equation}
\end{itemize} 
\end{lem}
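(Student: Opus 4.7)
My plan is to realise $(Z_1, Z_2)$ as a maximal (Lindvall-type) coupling between the laws $\mu$ and its pushforward $\Lambda_{\mathbf{x}}^{*}\mu$ under $\Lambda_{\mathbf{x}}$, arranged so that the coincidence event $\{Z_{2}=\Lambda_{\mathbf{x}}(Z_{1})\}$ captures the common mass of the two measures; once this is set up, the three claims (i)--(iii) are addressed in turn.

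For the density computation, the $\mathcal{C}^{1}$-diffeomorphism property in $(\mathbf{H_2})$ together with the fact that $\R^{d}\setminus U$ and $\R^{d}\setminus D$ are Lebesgue-negligible makes the change-of-variables formula applicable, so $\Lambda_{\mathbf{x}}^{*}\mu$ admits the density $\tilde q(v):=p(\Lambda_{\mathbf{x}}^{-1}(v))\,|\det J_{\Lambda_{\mathbf{x}}^{-1}}(v)|$ on $D$, with $p$ the standard Gaussian density. Set $\alpha:=\int\min(p,\tilde q)\,\mathrm{d}v$. The classical maximal coupling of $\mu$ and $\Lambda_{\mathbf{x}}^{*}\mu$ then produces $(Z_2, V)$ with $Z_2\sim\mu$, $V\sim\Lambda_{\mathbf{x}}^{*}\mu$ and $\PP(Z_2=V)=\alpha$; defining $Z_1:=\Lambda_{\mathbf{x}}^{-1}(V)$ yields $Z_1\sim\mu$ (since the pushforward of $\Lambda_{\mathbf{x}}^{*}\mu$ under $\Lambda_{\mathbf{x}}^{-1}$ is $\mu$), which proves (i). On the success event $\{Z_2=V\}$ one has $Z_2=\Lambda_{\mathbf{x}}(Z_1)$, so $(\ref{eq:controlability})$ forces $\tilde F(x,Z_1,y)=\tilde F(x',Z_2,y')$ whenever $|Z_1|\leqslant\tilde K$.

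The explicit lower bound in (ii) is then a volume-type estimate. A change of variables $v=\Lambda_{\mathbf{x}}(z)$ gives
\[
\PP\!\bigl(Z_2=\Lambda_{\mathbf{x}}(Z_1),\,|Z_1|\leqslant\tilde K\bigr)\;\geqslant\;\int_{B(0,\tilde K)}\min\!\Bigl(p(z),\,p(\Lambda_{\mathbf{x}}(z))\,|\det J_{\Lambda_{\mathbf{x}}}(z)|\Bigr)\,\mathrm{d}z,
\]
and on $B(0,\tilde K)$ one combines $|z|\leqslant\tilde K$, the bound $(\ref{ineq:lambda_unif_upperbound})$ giving $|\Lambda_{\mathbf{x}}(z)|\leqslant\tilde K+M_K$, and the Jacobian lower bound $(\ref{ineq:lambda_jacobian_bound})$. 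Both Gaussian factors are then bounded below by explicit constants, and multiplying by $C_{\tilde K}$ and $\operatorname{vol}(B(0,\tilde K))$ produces the announced $\delta_{\tilde K}>0$, which depends only on $K$ and $\tilde K$.

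The main obstacle is (iii). The bound $|Z_2-Z_1|\leqslant M_K$ on the success event is immediate from $(\ref{ineq:lambda_unif_upperbound})$ applied at $u=Z_1$, but a naive independent residual coupling would break it on the failure event. My plan here is to design the residual so that it also respects the bound: starting from the explicit $L^{\infty}$-coupling $X\mapsto(X,\Lambda_{\mathbf{x}}(X))$ of $\mu$ with $\Lambda_{\mathbf{x}}^{*}\mu$ which keeps the two components within distance $M_K$ everywhere, one extracts the maximal part (responsible for the common mass $\alpha$) and couples the residual marginals of $Z_2$ and $V$ via a Strassen-type argument, possibly at the price of replacing $M_K$ by a fixed constant multiple of itself (still depending only on $K$). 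The triangle inequality $|Z_2-Z_1|\leqslant|Z_2-V|+|\Lambda_{\mathbf{x}}(Z_1)-Z_1|$ then yields the desired bound, after absorbing the multiplicative constant into a redefinition of $M_K$. This is the step where I would expect to spend the most technical effort, precisely because the structure of $\Lambda_{\mathbf{x}}$ is only available through the three properties in $(\mathbf{H_2})$ and nothing more.
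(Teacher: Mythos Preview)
Your construction handles (i) and (ii) correctly, and you are right that (iii) is the crux. The gap is that your residual argument cannot close under the stated hypotheses. After removing the common mass $\min(p,\tilde q)$, the leftover laws of $Z_2$ and of $V$ need not admit \emph{any} coupling with bounded distance, so a Strassen-type argument gives nothing. A pure shift already breaks it: take $d=1$ and $\Lambda_{\mathbf x}(u)=u+a$ (this satisfies all of $(\mathbf H_2)$ with $M_K=|a|$, $C_{\tilde K}=1$, and arises e.g.\ in the additive-noise Euler scheme). Then the $\mu$-residual $(p-\tilde q)^+$ is supported on $(-\infty,a/2)$ with an unbounded Gaussian left tail, whereas the $\Lambda_{\mathbf x}^*\mu$-residual $(\tilde q-p)^+$ is supported on $[a/2,\infty)$. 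Any coupling of the two residuals forces the second coordinate to be $\geqslant a/2$ while the first is unbounded below, so $|Z_2-V|$ cannot be a.s.\ bounded by any constant. Your triangle-inequality route therefore fails, and no multiple of $M_K$ rescues it.

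The paper avoids the residual problem by never forming it. With $\Lambda_1(u)=(u,\Lambda_{\mathbf x}(u))$, $\Lambda_2(v)=(\Lambda_{\mathbf x}^{-1}(v),v)$ and $S(u,v)=(v,u)$, it sets $\mathbf P_1:=\tfrac12(\Lambda_1^*\mu\wedge\Lambda_2^*\mu)$, symmetrises $\tilde{\mathbf P}_1:=\mathbf P_1+S^*\mathbf P_1$, and completes on the diagonal:
\[
\mathcal L(Z_1,Z_2)=\tilde{\mathbf P}_1+\Delta^*\bigl(\mu-\pi_1^*\tilde{\mathbf P}_1\bigr),\qquad\Delta(u)=(u,u).
\]
The factor $\tfrac12$ ensures $\pi_i^*\mathbf P_1\le\tfrac12\mu$ for $i=1,2$, hence $\pi_1^*\tilde{\mathbf P}_1=\pi_2^*\tilde{\mathbf P}_1\le\mu$, so the diagonal complement is a nonnegative measure and both marginals equal $\mu$. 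The whole law is supported on $\{v=\Lambda_{\mathbf x}(u)\}\cup\{v=\Lambda_{\mathbf x}^{-1}(u)\}\cup\{v=u\}$, and $(\mathbf H_2)$ (together with the equivalence $|\Lambda_{\mathbf x}(u)-u|\le M_K\ \forall u\Leftrightarrow|\Lambda_{\mathbf x}^{-1}(v)-v|\le M_K\ \forall v$) gives $|Z_1-Z_2|\le M_K$ on each piece, so (iii) is immediate. The only cost of the symmetrisation is a harmless factor $\tfrac12$ in the lower bound of (ii) compared to your maximal-coupling estimate.
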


\begin{proof} Let $\mathbf{x}:=(x,x',y,y')\in B(0,K)^4$.
First, let us denote by $\pi_1$ (resp. $\pi_2$) the projection from $\R^d\times\R^d$ to $\R^d$ of the first (resp. the second) coordinate. Introduce the two following functions defined on $\R^d$ 
\begin{align*} 
\Lambda_1: u_1 &\mapsto (u_1,\Lambda_\mathbf{x}(u_1))\\
\Lambda_2: u_2 &\mapsto (\Lambda_\mathbf{x}^{-1}(u_2),u_2)
\end{align*}
where $\Lambda_\mathbf{x}$ is the function given by $(\mathbf{H_2})$.
Now, we set 
\begin{equation*}
\bfP_1=\frac{1}{2}(\Lambda_1^*\mu\wedge\Lambda_2^*\mu).
\end{equation*}
Let us find a simplest expression for $\bfP_1$.
For every measurable function $f:\R^d\times\R^d\to\R_+$, we have 
\begin{align*}
\Lambda_1^*\mu(f)=\int_{\R^d}f(u_1,\Lambda_\mathbf{x}(u_1))\mu(du_1)=\int_{\R^d\times\R^d}f(u_1,u_2)\delta_{\Lambda_\mathbf{x}(u_1)}(du_2)\mu(du_1)
\end{align*}
and 
\begin{align*}
\Lambda_2^*\mu(f)&=\int_{\R^d}f(\Lambda_\mathbf{x}^{-1}(u_2),u_2)\mu(du_2)\\
&=\frac{1}{(2\pi)^{d/2}}\int_{\R^d}f(\Lambda_\mathbf{x}^{-1}(u_2),u_2)\exp\left(-\frac{|u_2|^2}{2}\right)du_2\\
&=\frac{1}{(2\pi)^{d/2}}\int_{\R^d}f(u_1,\Lambda_\mathbf{x}(u_1))\exp\left(-\frac{|\Lambda_\mathbf{x}(u_1)|^2}{2}\right)|\det(J_{\Lambda_\mathbf{x}}(u_1))|du_1\quad\text{ (by setting }u_1=\Lambda_\mathbf{x}^{-1}(u_2)) \\
&=\int_{\R^d}f(u_1,\Lambda_\mathbf{x}(u_1))\underbrace{\exp\left(\frac{|u_1|^2}{2}-\frac{|\Lambda_\mathbf{x}(u_1)|^2}{2}\right)|\det(J_{\Lambda_\mathbf{x}}(u_1))|}_{=:D_{\Lambda_\mathbf{x}}(u_1)}\mu(du_1)\\
&=\int_{\R^d\times\R^d}f(u_1,u_2)\delta_{\Lambda_\mathbf{x}(u_1)}(du_2)D_{\Lambda_\mathbf{x}}(u_1)\mu(du_1).\\
\end{align*}

By construction, we then have

\begin{equation}\label{measure_P1}
\bfP_1(du_1,du_2)=\frac{1}{2}\delta_{\Lambda_\mathbf{x}(u_1)}(du_2)(D_{\Lambda_\mathbf{x}}(u_1)\wedge1)\mu(du_1).
\end{equation}

Write $S(u_1,u_2)=(u_2,u_1)$ and denote by $\tilde{\bfP}_1$ the ``symmetrized'' non-negative measure induced by $\bfP_1$, 
\begin{equation}\label{measure_P1_tilde}
\tilde{\bfP}_1=\bfP_1+S^*\bfP_1.
\end{equation}
We then define $(Z_1,Z_2)$ as follows:
\begin{equation}\label{couple_law}
\mathcal{L}(Z_1,Z_2)=\tilde{\bfP}_1+\Delta^*(\mu-\pi_1^*\tilde{\bfP}_1)=\bfP_1+\bfP_2
\end{equation}
with $\Delta(u)=(u,u)$ and $\bfP_2=S^*\bfP_1+\Delta^*(\mu-\pi_1^*\tilde{\bfP}_1)$. It remains to prove that $\mathcal{L}(Z_1,Z_2)$ is well defined and satisfies all the properties required by the lemma.\\

\textbf{First step:} Prove that $\bfP_2$ is the sum of two non-negative measures.\\
Using \eqref{measure_P1}, we can check that for all non-negative function f,
\[\pi_1^*\bfP_1(f)\leqslant\frac{1}{2}\mu(f)\]
and 
\[\pi_2^*\bfP_1(f)=\pi_1^*(S^*\bfP_1)(f)\leqslant\frac{1}{2}\mu(f).\] 
By adding the two previous inequalities, we deduce that the measure $\mu-\pi_1^*\tilde{\bfP}_1$ is non-negative. This concludes the first step.\\

\textbf{Second step:} Prove that $\pi_1^*(\bfP_1+\bfP_2)=\pi_2^*(\bfP_1+\bfP_2)=\mu$.\\
This fact is almost obvious. We just need to use the fact that \[\pi_1\circ\Delta=\pi_2\circ\Delta={\rm Id}\] and the symmetry property of $\tilde{\bfP}_1$, \[\text{i.e. }~ \pi_1^*\tilde{\bfP}_1=\pi_2^*\tilde{\bfP}_1.\]

\textbf{Third step:} Prove \eqref{eq1:coupling_lem_step1} and \eqref{eq2:coupling_lem_step1}.\\
Let us first remark that the support of $\bfP_1+\bfP_2$ is included in
\[\{(u,v)\in\R^d\times\R^d~|~v=\Lambda_\mathbf{x}(u)\}\cup\{(u,v)\in\R^d\times\R^d~|~v=\Lambda_\mathbf{x}^{-1}(u)\}\cup\{(u,v)\in\R^d\times\R^d~|~v=u\}.\]
Therefore, by \eqref{ineq:lambda_unif_upperbound} in $(\mathbf{H_2})$ and the fact that 
\[(\forall u\in\R^d,~|\Lambda_\mathbf{x}(u)-u|\leqslant M_{K})~\Longleftrightarrow~(\forall u\in\R^d,~|\Lambda_\mathbf{x}^{-1}(u)-u|\leqslant M_{K})\]
since $\Lambda_\mathbf{x}$ is invertible on $\R^d$,
we finally get \eqref{eq2:coupling_lem_step1}.\\
Then, using again $(\mathbf{H_2})$ where $\tilde{K}$ is defined and the definition of the subprobability $\bfP_1$ we get 
\begin{equation}
\PP(\tilde{F}(x,Z_1,y)=\tilde{F}(x',Z_2,y'))\geqslant\underbrace{\bfP_1(B(0,\tilde{K})\times\Lambda(B(0,\tilde{K})))}_{=\PP(Z_2=\Lambda_{\mathbf{x}}(Z_1),|Z_1|\leqslant \tilde{K})}
\end{equation} 
and \[\PP(Z_2=\Lambda_{\mathbf{x}}(Z_1),|Z_1|\leqslant \tilde{K})=\frac{1}{2}\int_{B(0,\tilde{K})}(D_{\Lambda_\mathbf{x}}(u)\wedge1)\mu(du).\]

It just remains to use \eqref{ineq:lambda_jacobian_bound} and \eqref{ineq:lambda_unif_upperbound} in $(\mathbf{H_2})$ to conclude.
Indeed,
\begin{align*}
\PP(Z_2=\Lambda_{\mathbf{x}}(Z_1),|Z_1|\leqslant \tilde{K})=&~\frac{1}{2}\int_{B(0,\tilde{K})}\left(\exp\left(\frac{|u|^2}{2}-\frac{|\Lambda_\mathbf{x}(u)|^2}{2}\right)|\det(J_{\Lambda_\mathbf{x}}(u))|\right)\wedge1~\mu(du)\\
\geqslant&~\frac{1}{2}\mu(B(0,\tilde{K}))\left[\left(\exp\left(-\frac{(M_{\tilde{K}}+\tilde{K})^2}{2}\right)C_{\tilde{K}}\right)\wedge1\right]=:\delta_{\tilde{K}}>0~\\
\end{align*}
which concludes the proof.
\end{proof}

\subsubsection{Sticking step}

Now, if the positions $X^1_{n+1}$ and $X^2_{n+1}$ are stuck together, we want that they remain fastened together for all $k>n+1$ which means that:
\begin{align}\label{eq1:relation_step2}
&\forall k\geqslant n+1,\quad F(X^1_{k},\Delta^1_{k+1})=F(X^2_{k},\Delta^2_{k+1})\nonumber\\
\Longleftrightarrow\quad &\forall k\geqslant n+1,\quad F(X^1_{k},\xi^1_{k+1}+\sum_{l=1}^{+\infty}a_l\xi^1_{k+1-l})=F(X^1_{k},\xi^2_{k+1}+\sum_{l=1}^{+\infty}a_l\xi^2_{k+1-l})
\end{align}
since $X^1_{k}=X^2_{k}$.
Recall that for all $k\in\Z$, $g_{k}=\xi^1_{k+1}-\xi^2_{k+1}$ is the drift between the underlying noises. Then, if we have 
\begin{align}\label{eq2:relation_step2}
&\forall k\geqslant n+1,\quad \xi^1_{k+1}+\sum_{l=1}^{+\infty}a_l\xi^1_{k+1-l}=\xi^2_{k+1}+\sum_{l=1}^{+\infty}a_l\xi^2_{k+1-l}\nonumber\\
\Longleftrightarrow\quad&\forall k\geqslant n+1,\quad g_k=-\sum_{l=1}^{+\infty}a_lg_{k-l},
\end{align}
the identity \eqref{eq1:relation_step2} is automatically satisfied. 
\begin{rem} The successful $g_k$ defined by relation \eqref{eq2:relation_step2} is $\mathcal{F}_k$-measurable. This explains why we chose to index it by $k$ even if it represents the drift between $\xi^1_{k+1}$ and $\xi^2_{k+1}$. 
\end{rem}

Hence, we will try to get \eqref{eq2:relation_step2} on successive finite intervals to finally get a bound on the successful-coupling probability. The size choice of those intervals will be important according to the hypothesis $(\mathbf{H_{poly}})$ or $(\mathbf{H_{exp}})$ that we made. The two next results will be our tools to get \eqref{eq2:relation_step2} on Subsection \ref{subsection:lower-bound_coupling_proba}.
For the sake of simplicity we set out these results on $\R$. On $\R^d$ we just have to apply them on every marginal. Lemma \ref{lem:coupling_step2} is almost the statement of Lemma 5.13 of \cite{hairer2005ergodicity} or Lemma 3.2 of \cite{fontbona2017rate}.

\begin{lem}\label{lem:coupling_step2}
Let $\mu:=\mathcal{N}(0,1)$. Let $a\in\R$, $b\geqslant |a|$ and $M_b:=\max(4b,-2\log(b/8))$.  
\begin{itemize}
\item[(i)]
For all $b\geqslant|a|$, there exist $\delta_b^1$ and $\delta_b^2\in(0,1)$, such that we can build a probability measure $\mathcal{N}_{a,b}^2$ on $\R^2$ with every marginal equal to $\mu$ and such that:
 \[\mathcal{N}_{a,b}^2(\{(x,y)~|~y=x+a\})\geqslant\delta_b^1\quad\text{ and }\quad
 \mathcal{N}_{a,b}^2(\{(x,y)~|~|y-x|\leqslant M_b\})=1.\]
 \item[(ii)] Moreover, if $b\in(0,1)$, the previous statement holds with $\delta_b^1=1-b$.
\end{itemize}
\end{lem}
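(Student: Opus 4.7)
By symmetry of $\mu$ around $0$, I reduce to the case $a \geq 0$, and write $\varphi$ for the standard Gaussian density, $\Phi$ for its CDF. The plan is to construct $\mathcal{N}_{a,b}^2$ as a sum $\nu_1 + \nu_2$ of two sub-probability measures: $\nu_1$ supplies the mass on the coincidence line $\{y = x+a\}$ (giving the bound on $\delta_b^1$), while $\nu_2$ couples the residuals while remaining supported in $\{|y-x| \leq M_b\}$. This mirrors the scheme in \cite{hairer2005ergodicity} and \cite{fontbona2017rate}, calibrated here to match the explicit $M_b$.

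For the shift part, fix a threshold $R = R(b) > 0$ and set
\[
\nu_1(dx, dy) = p_a(x)\,\mathbf{1}_{[-R,R]}(x)\,\delta_{x+a}(dy)\,dx, \qquad p_a(x) := \min(\varphi(x),\varphi(x+a)).
\]
Its mass is exactly the $\{y=x+a\}$-probability. Splitting the integral at $x=-a/2$ (the zero of $\varphi(x)-\varphi(x+a)$ on $\mathbb{R}$) gives
\[
\delta_b^1 = \int_{-R}^{R} p_a = \Phi(R+a) + \Phi(R) - 2\Phi(a/2).
\]
For $b \geq 1$, any fixed $R>0$ keeps $\delta_b^1$ bounded below by a positive constant depending only on $b$. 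For $b \in (0,1)$, take $R = M_b/2 = \log(8/b)$ and combine the elementary bound $\Phi(a/2) \leq 1/2 + a/(2\sqrt{2\pi})$ (from $\|\varphi\|_\infty = 1/\sqrt{2\pi}$) with the Mills-ratio estimate $\Phi(-R) \leq \varphi(R)/R$ and the hypothesis $a \leq b$; this yields $\delta_b^1 \geq 1 - 2\varphi(R)/R - a/\sqrt{2\pi} \geq 1 - b$.

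For the residual, let $r_X := \varphi - p_a\,\mathbf{1}_{[-R,R]}$ and $r_Y := \varphi - p_a(\cdot - a)\,\mathbf{1}_{[-R+a, R+a]}$, both of total mass $1 - \delta_b^1$. A case-split on $x = \pm a/2$ and $x = \pm R$ shows that $r_X$ vanishes on $[-R, -a/2]$ and $r_Y$ on $[a/2, R+a]$; the remaining supports split into a bulk part (in a compact box of diameter at most $2R+a$) and a tail part where the density equals $\varphi$. I couple the tails diagonally ($y = x$, giving $|y-x|=0$) after a small mass reshuffling across $|x| \sim R$ to align the supports, and couple the bulks by any measure supported in the box, giving $|y-x| \leq 2R+a \leq M_b$. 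The calibration $M_b = \max(4b, -2\log(b/8))$ is precisely what makes the two sub-steps compatible: $4b$ absorbs the bulk diameter (when $R \sim b$), and $-2\log(b/8)$ absorbs the Mills-ratio truncation loss (when $R = \log(8/b)$).

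The main obstacle is the residual coupling in Step 3. The shift by $a$ misaligns the tails of $r_X$ (on $(-\infty,-R)\cup(R,\infty)$) against those of $r_Y$ (on $(-\infty,-R+a)\cup(R+a,\infty)$), so a verbatim diagonal coupling of the tails fails. The fix is to transfer the discrepancy (of order $\Phi(-R+a)-\Phi(-R) = O(a\varphi(R))$) from tail to bulk and check the transfer respects the $M_b$ bound; this reduces to elementary inequalities relating $R$, $a$, and $M_b$, and is where the exact form of $M_b$ enters the argument.
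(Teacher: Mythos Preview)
The paper does not supply a proof of this lemma; it simply notes that the statement is ``almost the statement of Lemma~5.13 of \cite{hairer2005ergodicity} or Lemma~3.2 of \cite{fontbona2017rate}'' and defers to those references. Your construction is the standard one from those sources: maximal coupling on the shifted diagonal $y=x+a$, then a residual coupling confined to a bounded strip.

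There is, however, a concrete arithmetic slip in the case $b\in(0,1)$. You take $R=M_b/2=\log(8/b)$ and then assert that coupling the remaining bulks ``by any measure supported in the box'' gives $|y-x|\leqslant 2R+a\leqslant M_b$. With $R=M_b/2$ this reads $M_b+a\leqslant M_b$, which is false whenever $a>0$. The box in question is $[-a/2,R+a]\times[-R,a/2]$ (the remaining $r_X$- and $r_Y$-supports after the diagonal tail coupling), and the extremal pair $(x,y)=(R+a,-R)$ does give $|y-x|=2R+a>M_b$.

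The repair is minor. Either shrink the truncation to $R=(M_b-a)/2=\log(8/b)-a/2$, so that $2R+a=M_b$ exactly; since $a/2\leqslant b/2<1/2$ this barely moves $R$, and your Mills-ratio estimate for $\delta_b^1\geqslant 1-b$ still goes through. Alternatively, keep $R=M_b/2$ but couple more carefully than a blind product: match the extra $r_X$-tail on $(R,R+a]$ only against $r_Y$-bulk mass in $[-R+a,a/2]$ (yielding $|y-x|\leqslant 2R=M_b$), and symmetrically on the other side. The mass compatibility this requires reduces to $\varphi(R)+\varphi(R-a)\leqslant\varphi(a/2)$, which holds for $b<1$ since then $R>2>a$. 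Either route closes the argument, but as written the key inequality is false.
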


The following corollary is an adapted version of Lemma 3.3 of \cite{fontbona2017rate} to our discrete context.

\begin{cor}\label{cor:coupling_step2}
Let $T>0$ be an integer, $b>0$, $g=(g_0,g_1,\dots,g^{}_{T})\in\R^{T+1}$ such that $\|g\|\leqslant b$  where $\|.\|$ is the euclidian norm on $\R^{T+1}$ and set
$M_b:=\max(4b,-2\log(b/8))$. 
\begin{itemize}
 \item[(i)] 
Then, there exists $\delta_b^1\in(0,1)$, for which we can build a random variable $((\xi^1_{k+1})_{k\in\llbracket0,T\rrbracket},(\xi^2_{k+1})_{k\in\llbracket0,T\rrbracket})$ with values in $(\R^{T+1})^2$, with marginal distribution $\mathcal{N}(0,I_{T+1})$ and satisfying:
\[\PP\left(\xi^1_{k+1}=\xi^2_{k+1}+g_{k}~~\forall k\in\llbracket0,T\rrbracket\right)\geqslant\delta_b^1\]
and
\[\PP\left(\|\xi^1-\xi^2\|\leqslant M_b\right)=1.\] 
 \item[(ii)] Moreover, if $b\in(0,1)$, the previous statement holds with $\delta_b^1=1-b$.
\end{itemize}
\end{cor}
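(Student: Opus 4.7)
The plan is to reduce the $(T+1)$-dimensional statement to the one-dimensional Lemma \ref{lem:coupling_step2} by exploiting the rotational invariance of the standard Gaussian measure on $\R^{T+1}$. The point is that $\xi^1$ and $\xi^2$ only need to differ along the single direction $g/\|g\|$, so all the coupling effort can be concentrated on that line and the orthogonal components can simply be made equal.

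If $g=0$, the corollary is trivial: take $\xi^1=\xi^2\sim\mathcal{N}(0,I_{T+1})$. Otherwise, I would set $e:=g/\|g\|\in\R^{T+1}$, complete $e$ into an orthonormal basis, and use the decomposition $v=\langle v,e\rangle e+v^\perp$ with $v^\perp\in e^\perp$. If $\xi\sim\mathcal{N}(0,I_{T+1})$, then $\langle\xi,e\rangle\sim\mathcal{N}(0,1)$ is independent of $\xi^\perp$, the latter being a standard Gaussian on $e^\perp$. I would then apply Lemma \ref{lem:coupling_step2} with the scalar $a:=\|g\|$ and the same $b$, which is legitimate since $\|g\|\leqslant b$. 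This yields a coupling $(Z_1,Z_2)$ of one-dimensional standard Gaussians with
\[
\PP\bigl(Z_1=Z_2+\|g\|\bigr)\geqslant\delta_b^1
\qquad\text{and}\qquad
\PP\bigl(|Z_1-Z_2|\leqslant M_b\bigr)=1,
\]
the constant $\delta_b^1$ being $1-b$ when $b\in(0,1)$.

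Independently of $(Z_1,Z_2)$, I would draw a standard Gaussian $\eta$ on $e^\perp$ and define
\[
\xi^1:=Z_1\,e+\eta,\qquad \xi^2:=Z_2\,e+\eta.
\]
By construction, each $\xi^i$ has law $\mathcal{N}(0,I_{T+1})$. On the event $\{Z_1=Z_2+\|g\|\}$, one has $\xi^1-\xi^2=\|g\|\,e=g$, which translates coordinate-wise to $\xi^1_{k+1}=\xi^2_{k+1}+g_k$ for every $k\in\llbracket0,T\rrbracket$ simultaneously; this gives the probabilistic lower bound. Moreover $\|\xi^1-\xi^2\|=|Z_1-Z_2|\leqslant M_b$ almost surely, which gives the deterministic control. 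Statement (ii) is then inherited automatically from the corresponding part of Lemma \ref{lem:coupling_step2}.

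There is no genuine obstacle: the entire content of the argument is the rotational-invariance reduction, after which the one-dimensional coupling supplied by Lemma \ref{lem:coupling_step2} is used as a black box. The only mild bookkeeping point is checking that $\xi^1$ and $\xi^2$ really have marginal law $\mathcal{N}(0,I_{T+1})$, which follows at once from the independence of the radial and orthogonal components of a standard Gaussian vector.
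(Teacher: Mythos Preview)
Your proof is correct and follows essentially the same approach as the paper: both reduce to the one-dimensional Lemma~\ref{lem:coupling_step2} by choosing an orthonormal basis whose first vector is $g/\|g\|$, coupling the component along $g$ via $(Z_1,Z_2)\sim\mathcal{N}^2_{\|g\|,b}$, and making the orthogonal components identical. The paper phrases this through a pair of isometries $\W^i:\R^{T+1}\to L^2(\Omega)$, but the content is the same as your direct construction $\xi^i=Z_i\,e+\eta$.
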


\begin{proof}
Let $(u_k)_{k\in\llbracket0,T\rrbracket}$ be an orthonormal basis of $\R^{T+1}$ with $u_0=\frac{g}{\|g\|}$.
We denote by $(U_1,U_2)$ a random variable which has distribution $\mathcal{N}^2_{a,b}$ (with $a=\|g\|$) given in the lemma \ref{lem:coupling_step2}.
Let $(\varepsilon_k)_{k\in\llbracket1,T\rrbracket}$ be an iid random variable sequence with $\varepsilon_1\sim\mathcal{N}(0,1)$ and independent from $(U_1,U_2)$.
Then, for $i=1,2$ we define the isometry:
\begin{equation}\label{eq:W^i}
\begin{array}{llll}
\W^i:& \R^{T+1}&\to&\W^i(\R^{T+1})\subset L^2(\Omega,\mathcal{F},\PP)\\
                           &u_0&\mapsto& U_i\\
                           &u_k&\mapsto&\varepsilon_k ~\text{ for }~k\in\llbracket1,T\rrbracket.
       \end{array}
\end{equation}
And we set for all $n\in\llbracket0,T\rrbracket$, $\xi^i_{n+1}:=\W^i(e_n)$ where $e_n$ is the vector of $\R^{T+1}$ for which every coordinate is $0$ except the $(n+1)^{\text{th}}$ which is $1$.
Since $(u_k)_{k\in\llbracket0,T\rrbracket}$ is an orthonormal basis of $\R^{T+1}$, we then have:
\[e_n=\sum_{k=0}^T\langle e_n,u_k\rangle u_k.\]
Hence,
\begin{equation*}
\xi^i_{n+1}=\W^i\left(\sum_{k=0}^T\langle e_n,u_k\rangle u_k\right)
        =U_i\frac{g_{n}}{\|g\|}+\sum_{k=1}^T\langle e_n,u_k\rangle\varepsilon_k.
\end{equation*}
$\xi^i_{n+1}$ is clearly centered and Gaussian as a linear combination of independent centered Gaussian random variables and using that $\W^i$ is an isometry, we get that $(\xi^i_{k+1})_{k\in\llbracket0,T\rrbracket}$ has distribution $\mathcal{N}(0,I_{T+1})$ for $i=1,2$.
Therefore, we built $\xi^1$ and $\xi^2$ as anounced. Indeed, by Lemma \ref{lem:coupling_step2}
\[\PP\left(\xi^1_{n+1}=\xi^2_{n+1}+g_{n}~~\forall n\in\llbracket0,T\rrbracket\right)=
\PP\left(U_1=U_2+\|g\|\right)
\geqslant\delta_b^1\]
and
\[\PP\left(\|\xi^1-\xi^2\|\leqslant M_b\right)=\PP(|U_1-U_2|\leqslant M_b)=1.\]
$(ii)$ also follows immediately from Lemma \ref{lem:coupling_step2}.

\end{proof}

\section{Coupling under $(\mathbf{H_{poly}})$ or $(\mathbf{H_{exp}})$}\label{section:coupling_under_H}

We can now move on the real coupling procedure to finally get a lower-bound for the successful-coupling probability. In a first subsection, we explain exactly what we called ``controlled cost'' and in a second subsection we spell out our bound.

\subsection{Admissibility condition}\label{subsection:adm_condition}

The ``controlled cost'' is called ``admissibility'' in \cite{hairer2005ergodicity}. Here, we will talk about $(K,\alpha)$-admissibility, as in \cite{fontbona2017rate}, but in the following sense:

\begin{dfn} Let $K>0$ and $\alpha>0$ be two constants and $\tau$ a random variable with values in $\N$. We say that the system is $(K,\alpha)$-admissible at time $\tau$
if $\tau(\omega)<+\infty$ and if\\ $(X^1_{\tau}(\omega),X^2_\tau(\omega),(\xi^1_n(\omega),\xi^2_n(\omega))_{n\leqslant\tau})$ satisfies 
\begin{equation}\label{eq:adm_1}
\forall n\geqslant0,\left|\sum_{k=n+1}^{+\infty}a_kg_{\tau+n-k}(\omega)\right|\leqslant v_n
\end{equation} 
and
\begin{equation}\label{eq:adm_2}
\left|X^i_\tau(\omega)\right|\leqslant K,\quad\left|\sum_{k=1}^{+\infty}a_{k}\xi^i_{\tau+1-k}(\omega)\right|\leqslant K~~ \text{ for } i=1,2
\end{equation}
with \begin{equation}\label{eq:adm_speed}
v_n=(n+1)^{-\alpha}\text{ under $(\mathbf{H_{poly}})$}\quad\text{ and }\quad v_n=e^{-\alpha n}\text{ under $(\mathbf{H_{exp}})$.}
\end{equation}
\end{dfn}

\begin{rem} On the one hand, condition \eqref{eq:adm_1} measures the distance between the past of the noises (before time $\tau$). On the other hand, condition \eqref{eq:adm_2} has two parts: the first one ensures that at time $\tau$ both processes are not far from each other and the second part is a constraint on the memory part of the Gaussian noise $\Delta^i_{\tau+1}$.
\end{rem}

The aim is to prove that under those two conditions, the coupling will be successful with a probability lower-bounded by a positive constant. To this end, we will need to ensure that at every time $\tau^{}_j$, the system will be $(K,\alpha)$-admissible with positive probability.
We set:
\begin{equation}\label{set:adm1}
\Omega^1_{\alpha,\tau}:=\left\{\omega,\tau(\omega)<+\infty,\left|\sum_{k=n+1}^{+\infty}a_kg_{\tau+n-k}(\omega)\right|\leqslant v_n\quad\forall n\in\N\right\}
\end{equation}
and
\begin{equation}\label{set:adm2}
\Omega^2_{K,\tau}:=\left\{\omega,\tau(\omega)<+\infty,\left|X^i_\tau(\omega)\right|\leqslant K\text{ and }\left|\sum_{k=1}^{+\infty}a_{k}\xi^i_{\tau+1-k}(\omega)\right|\leqslant K~~ \text{ for } i=1,2\right\}.
\end{equation}
We define
\begin{equation}\label{set:adm}
\Omega_{K,\alpha,\tau}=\Omega^1_{\alpha,\tau}\cap\Omega^2_{K,\tau}.
\end{equation}
If $\omega\in\Omega_{K,\alpha,\tau}$, we will try to couple at time $\tau+1$. Otherwise, we say that Step 1 fails and one begins Step 3.
Hence, Step 1 of trial $j$ has two ways to fail: either $\omega$ belongs to $\Omega^c_{K,\alpha,\tau^{}_{j-1}}$ and one moves directly to Step 3 or $\omega$ belongs to $\Omega_{K,\alpha,\tau_{j-1}}$, one tries to couple and it fails.

\subsection{Lower-bound for the successful-coupling probability}\label{subsection:lower-bound_coupling_proba}

The main purpose of this subsection is to get a positive lower-bound for the successful-coupling probability which will be independent of $j$ (the number of the tentative), in other words we want to prove the following proposition 

\begin{propo}\label{prop:lower-bound_success_proba} Assume $(\mathbf{H_{1}})$ and $(\mathbf{H_{2}})$.
Let $K>0$, $\alpha>~\frac{1}{2}\vee\left(\frac{3}{2}-\beta\right)$ if we are under $(\mathbf{H_{poly}})$ and $\alpha>0$ different from $\zeta$ if we are under $(\mathbf{H_{exp}})$. In both cases, there exists $\delta_0$ in $(0,1)$ such that for all $j\geqslant1$,
\begin{equation}
\delta_0\leqslant\PP(\Delta\tau^{}_j=+\infty|\Omega_{K,\alpha,\tau^{}_{j-1}})
\end{equation}
where $\Delta\tau^{}_j:=\tau_j-\tau_{j-1}$ and $\tau_j$ is defined in Subsection \ref{subsection:coupling_scheme} as the end of trial $j$.\\
Moreover, we can choose $\delta_1\in(0,1)$ such that
\begin{equation}\label{ineq:lower-bound_fail_proba}
\forall j\geqslant1,\quad \delta_1\leqslant\PP(\tau_j<\infty|\tau_{j-1}<\infty).
\end{equation}
\end{propo}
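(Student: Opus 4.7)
The plan is to quantify the success probability of trial $j$, working on the admissibility event $\Omega_{K,\alpha,\tau}$ with $\tau := \tau_{j-1}$. A successful trial consists of two sub-events: Step 1 (hitting at time $\tau+1$), handled by Lemma \ref{lem:coupling_lem_step1}, followed by Step 2 (infinite sticking), handled by iterating Corollary \ref{cor:coupling_step2} on a partition of $\llbracket \tau+1, +\infty \llbracket$ into consecutive blocks of well-chosen lengths. Multiplying the Step 1 lower bound by the infinite product of the conditional Step 2 probabilities over blocks will then give the desired uniform lower bound $\delta_0$.

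\textbf{Step 1.} On $\Omega_{K,\alpha,\tau}$, condition \eqref{eq:adm_2} places
$\mathbf{x} := \bigl(X^1_\tau,\,X^2_\tau,\,\sum_{k \geq 1} a_k \xi^1_{\tau+1-k},\,\sum_{k \geq 1} a_k \xi^2_{\tau+1-k}\bigr)$
in $B(0,K)^4$, so Lemma \ref{lem:coupling_lem_step1} applies conditionally on $\mathcal{F}_\tau$ and produces a coupling $(\xi^1_{\tau+1},\xi^2_{\tau+1})$ achieving $X^1_{\tau+1}=X^2_{\tau+1}$ with probability at least $\delta_{\tilde K}$, while enforcing $|g_\tau|\leq M_K$ almost surely.

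\textbf{Step 2 and key decomposition.} Given Step 1 succeeds, coalescence is preserved forever if and only if the drift recursion $g_k = -\sum_{l \geq 1} a_l g_{k-l}$ holds for all $k > \tau$, cf.\ \eqref{eq2:relation_step2}. Partition $\llbracket 1,+\infty \llbracket$ into consecutive blocks $I_i = \llbracket m_{i-1}+1, m_i \rrbracket$ of lengths $L_i$ to be tuned, and on block $I_i$ apply Corollary \ref{cor:coupling_step2} (componentwise on $\R^d$) with target drift vector $\mathbf{g}^{(i)} := (g_{\tau+m})_{m \in I_i}$, using fresh Gaussian randomness. If $\|\mathbf{g}^{(i)}\|_2 \leq b_i$ with $b_i \in (0,1)$, then the block succeeds conditionally with probability at least $1-b_i$, so trial $j$ succeeds with probability at least $\delta_{\tilde K} \prod_i (1-b_i)$, which is positive as soon as $\sum_i b_i < +\infty$. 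For $m \geq 1$ the target drift decomposes as
\[
g_{\tau+m} = -a_m g_\tau \;-\; \sum_{l=1}^{m-1} a_l\, g_{\tau+m-l} \;-\; \sum_{l=m+1}^{\infty} a_l\, g_{\tau+m-l},
\]
in which the third sum is bounded by $v_m$ thanks to the admissibility condition \eqref{eq:adm_1}, the first term is bounded by $|a_m| M_K$, and the middle convolution involves drifts $g_{\tau+m-l}$ realized on the earlier blocks (each of bounded size via Lemma \ref{lem:coupling_step2}). The reverse identity of Remark \ref{rem:reverse_relation_b} allows one to invert this inhomogeneous convolution using the sequence $(b_k)$, which is precisely how the decay rate $\beta$ (resp.\ $\zeta$) enters the summability analysis.

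\textbf{Main obstacle and second bound.} The technical heart is choosing $L_i$ and propagating the estimates through the blocks so that $\sum_i b_i < +\infty$ under the stated range of $\alpha$. Under $(\mathbf{H_{poly}})$ this requires combining the polynomial bounds on $(a_k)$ and $(b_k)$, the discrete-derivative control $|a_k - a_{k+1}| \leq C_\kappa (k+1)^{-\kappa}$ (used for Abel-type summation against the newly-built drifts), and a tuning of the $L_i$ exploiting $\alpha > \tfrac{1}{2}$ (to absorb the $v_m$ contribution) together with $\alpha > \tfrac{3}{2} - \beta$ (to handle the $b$-inversion step); under $(\mathbf{H_{exp}})$ the exponential decays make the estimate routine except at the resonance $\alpha = \zeta$, which is therefore excluded. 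Finally, the moreover part is a triviality: Lemma \ref{lem:coupling_lem_step1} bounds the Step 1 success probability above by $\tfrac12 \mu(B(0,\tilde K)) < 1$, so failure of trial $j$ has positive probability uniformly in $j$, yielding some $\delta_1 \in (0,1)$ with $\delta_1 \leq \PP(\tau_j<+\infty \mid \tau_{j-1}<+\infty)$.
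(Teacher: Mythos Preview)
Your approach is essentially the paper's (Lemmas \ref{lem:success_proba_step1}--\ref{lem:success_proba_step2} combined via \eqref{eq:new_expr_success_proba}): the paper likewise inverts through $(b_k)$ to get the pointwise bound $|g^{(s)}_{\tau+n}|\leqslant M_K\sum_{k=0}^n|b_k|\,v_{n-k}$, controls this convolution by the Technical Lemma \ref{lem:technical} under $(\mathbf{H_{poly}})$ (direct geometric sum under $(\mathbf{H_{exp}})$), and then chooses dyadic block lengths $c_22^\ell$ under $(\mathbf{H_{poly}})$ (constant length $c_2$ under $(\mathbf{H_{exp}})$) so that $\|g^{(s)}\|_{I_{\tau,\ell}}\leqslant 2^{-\tilde\alpha\ell}$ and $\prod_\ell(1-2^{-\tilde\alpha\ell})>0$. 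One correction: the discrete-derivative bound $|a_k-a_{k+1}|\leqslant C_\kappa(k+1)^{-\kappa}$ is \emph{not} used in this proposition---the paper deploys it only later (Appendix \ref{appendix:H'_1}, via Lemma \ref{lem:IPP_majo}) for the compact-return estimates---so your Abel-summation remark is a red herring here, and relatedly the phrase ``each of bounded size via Lemma \ref{lem:coupling_step2}'' is off-target: conditionally on earlier blocks succeeding, those drifts equal $g^{(s)}$ exactly, and it is their inverted form, not an a posteriori $M_b$-bound, that yields the summable $b_i$.
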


The second part of Proposition \ref{prop:lower-bound_success_proba} may appear of weak interest but will be of first importance in Subsection \ref{subsection:compact_return}.

\subsubsection{Step 1 (hitting step)}

\begin{lem}\label{lem:success_proba_step1}
Let $K>0$ and $\alpha>0$. Assume $(\mathbf{H_1})$ and $(\mathbf{H_2})$. Let $\tilde{K}>0$ be the constant appearing in $(\mathbf{H_2})$, $\delta_1\in(0,1)$ and $\tau$ be a stopping time with respect to $(\mathcal{F}_n)_{n\in\Z}$ such that $\PP(\Omega_{K,\alpha,\tau})>0$.\\
We can build $(\xi^1_{\tau+1},\xi^2_{\tau+1})$ with $\xi^1_{\tau+1}\sim\mathcal{N}(0,I_d)$
and $\xi^2_{\tau+1}\sim\mathcal{N}(0,I_d)$ such that
\begin{itemize}
\item[(i)] There exist $K_1\in(0,\tilde{K}]$ and $\delta_{K_1}\in(0,1)$ such that
\begin{equation}\label{eq:success_proba_step1}
\PP(X^1_{\tau+1}=X^2_{\tau+1}|\Omega_{K,\alpha,\tau})\geqslant
\PP(\xi^2_{\tau+1}=\Lambda_\mathbf{x}(\xi^1_{\tau+1}),~|\xi^1_{\tau+1}|\leqslant K_1~|\Omega_{K,\alpha,\tau})\geqslant\delta_{K_1}>0
\end{equation}
and 
\begin{equation}\label{eq:fail_proba_step1}
\PP\left(\Omega_{K,\alpha,\tau}^c\cup\left(\{\xi^2_{\tau+1}\neq\Lambda_\mathbf{x}(\xi^1_{\tau+1})~\text{ or }~|\xi^1_{\tau+1}|> K_1\}\cap\Omega_{K,\alpha,\tau}\right)\right)\geqslant\delta_1
\end{equation}
where $\mathbf{x}:=\left(X^1_\tau,~X^2_\tau,~\sum_{k=0}^{+\infty}a_k\xi^1_{\tau+1-k},~\sum_{k=0}^{+\infty}a_k\xi^2_{\tau+1-k}\right)$ and $\Lambda_\mathbf{x}$ comes from $(\mathbf{H_2})$.
\item[(ii)] There exists $M_{K}>0$ such that
\[\left|g_{\tau}\right|=|\xi^1_{\tau+1}-\xi^2_{\tau+1}|\leqslant M_{K}\quad a.s.\]
\end{itemize}
\end{lem}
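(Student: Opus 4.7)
The plan is to apply Lemma \ref{lem:coupling_lem_step1} conditionally on $\mathcal{F}_\tau$ in order to realize $(\xi^1_{\tau+1},\xi^2_{\tau+1})$. First I would observe that on the admissibility event $\Omega_{K,\alpha,\tau}$, all four components of
\[\mathbf{x}=\left(X^1_\tau,~X^2_\tau,~\sum_{k=1}^{+\infty}a_k\xi^1_{\tau+1-k},~\sum_{k=1}^{+\infty}a_k\xi^2_{\tau+1-k}\right)\]
lie in $B(0,K)$ thanks to \eqref{eq:adm_2}, the last two coordinates being the ``past parts'' $y^i$ that enter the rewriting \eqref{eq:hitting_step}. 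Since $\mathbf{x}$ and $\Omega_{K,\alpha,\tau}$ are $\mathcal{F}_\tau$-measurable, a measurable selection applied to Lemma \ref{lem:coupling_lem_step1} produces, on $\Omega_{K,\alpha,\tau}$, a pair $(Z_1,Z_2)$ whose conditional distribution given $\mathcal{F}_\tau$ is the one prescribed by that lemma, so in particular each marginal is $\mathcal{N}(0,I_d)$ and conclusions (i)--(iii) of the lemma hold. On $\Omega_{K,\alpha,\tau}^c$, I would simply take $Z_1=Z_2\sim\mathcal{N}(0,I_d)$, consistent with the Step~3 rule $g_\tau=0$, and then set $\xi^i_{\tau+1}:=Z_i$.

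For (i), I would take $K_1:=\tilde{K}$ and $\delta_{K_1}:=\delta_{\tilde{K}}$. The controllability identity \eqref{eq:controlability} of $(\mathbf{H_2})$ implies that on
\[\{\xi^2_{\tau+1}=\Lambda_{\mathbf{x}}(\xi^1_{\tau+1}),~|\xi^1_{\tau+1}|\leqslant K_1\}\cap\Omega_{K,\alpha,\tau}\]
one has $\tilde{F}(X^1_\tau,\xi^1_{\tau+1},y^1)=\tilde{F}(X^2_\tau,\xi^2_{\tau+1},y^2)$, i.e. $X^1_{\tau+1}=X^2_{\tau+1}$. Combined with \eqref{eq1:coupling_lem_step1}, this gives \eqref{eq:success_proba_step1}. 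For \eqref{eq:fail_proba_step1}, the key observation is that the coupling measure $\mathbf{P}_1$ built in Lemma \ref{lem:coupling_lem_step1} has total mass at most $1/2$, because of the prefactor $\tfrac12$ in \eqref{measure_P1}; hence $\PP(X^1_{\tau+1}=X^2_{\tau+1}\mid\mathcal{F}_\tau)\leqslant 1/2$ on $\Omega_{K,\alpha,\tau}$. Consequently
\[\PP\!\left(\Omega_{K,\alpha,\tau}^c\cup\big(\{\xi^2_{\tau+1}\neq\Lambda_{\mathbf{x}}(\xi^1_{\tau+1})~\text{or}~|\xi^1_{\tau+1}|>K_1\}\cap\Omega_{K,\alpha,\tau}\big)\right)=1-\PP\!\left(X^1_{\tau+1}=X^2_{\tau+1},~\Omega_{K,\alpha,\tau}\right)\geqslant 1/2,\]
so any $\delta_1\in(0,1/2]$ does the job.

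Statement (ii) will be immediate: on $\Omega_{K,\alpha,\tau}$ it follows from \eqref{eq2:coupling_lem_step1} which forces $|Z_1-Z_2|\leqslant M_K$ almost surely, while on $\Omega_{K,\alpha,\tau}^c$ the choice $Z_1=Z_2$ gives $|g_\tau|=0\leqslant M_K$. The main (non-routine) obstacle will be the measurability of the conditional construction: Lemma \ref{lem:coupling_lem_step1} is stated pointwise in $\mathbf{x}$, and one must verify that $\omega\mapsto(Z_1(\omega),Z_2(\omega))$ is a genuine $\mathcal{F}_{\tau+1}$-measurable random variable with the prescribed regular conditional distribution given $\mathcal{F}_\tau$. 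This amounts to a standard measurable selection argument, using that the density $D_{\Lambda_{\mathbf{x}}}$ in \eqref{measure_P1} depends measurably on $\mathbf{x}$ and that $\Lambda_{\mathbf{x}}$ is itself built measurably in $\mathbf{x}$ from $(\mathbf{H_2})$.
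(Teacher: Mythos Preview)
Your construction of $(\xi^1_{\tau+1},\xi^2_{\tau+1})$ and your proofs of the success lower bound in \eqref{eq:success_proba_step1} and of (ii) are fine and coincide with the paper's argument. The problem is your treatment of the failure bound \eqref{eq:fail_proba_step1}.

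First, the displayed equality
\[
\PP\!\left(\Omega_{K,\alpha,\tau}^c\cup\big(\{\xi^2_{\tau+1}\neq\Lambda_{\mathbf{x}}(\xi^1_{\tau+1})\text{ or }|\xi^1_{\tau+1}|>K_1\}\cap\Omega_{K,\alpha,\tau}\big)\right)=1-\PP\!\left(X^1_{\tau+1}=X^2_{\tau+1},\,\Omega_{K,\alpha,\tau}\right)
\]
is incorrect: the complement of the failure event is the \emph{success event} $\{\xi^2_{\tau+1}=\Lambda_{\mathbf{x}}(\xi^1_{\tau+1}),\,|\xi^1_{\tau+1}|\leqslant K_1\}\cap\Omega_{K,\alpha,\tau}$, which is only a subset of $\{X^1_{\tau+1}=X^2_{\tau+1}\}\cap\Omega_{K,\alpha,\tau}$. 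Second, the implication ``$\mathbf{P}_1$ has total mass $\leqslant 1/2$, hence $\PP(X^1_{\tau+1}=X^2_{\tau+1}\mid\mathcal{F}_\tau)\leqslant 1/2$'' does not follow: the event $\{X^1_{\tau+1}=X^2_{\tau+1}\}$ can also receive mass from $S^*\mathbf{P}_1$ and from the diagonal part of the coupling (for instance whenever $\tilde F(x,u,y)=\tilde F(x',u,y')$ for some $u$), and nothing in $(\mathbf{H_2})$ rules this out. Third, and most importantly, even if one repairs these points and bounds the success event directly by the $\mathbf{P}_1$-mass, one obtains at best $\PP(\text{failure})\geqslant 1/2$, which only covers $\delta_1\in(0,1/2]$; the lemma, however, is stated for an \emph{arbitrary} $\delta_1\in(0,1)$.

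The paper's proof handles \eqref{eq:fail_proba_step1} differently and this is precisely where the freedom in $K_1$ is used. One does \emph{not} fix $K_1=\tilde K$; instead, with $p_K:=\PP(\Omega_{K,\alpha,\tau})$, one bounds
\[
\PP(\text{failure})\;\geqslant\;(1-p_K)+p_K\,\PP(|\xi^1_{\tau+1}|>K_1\mid\Omega_{K,\alpha,\tau})\;\geqslant\;1-p_K\,\mu(B(0,K_1)),
\]
and then chooses $K_1\in(0,\tilde K]$ small enough that $\mu(B(0,K_1))\leqslant(1-\delta_1)/p_K$. This is why the statement produces a $K_1\in(0,\tilde K]$ rather than asserting $K_1=\tilde K$: the radius $K_1$ must be allowed to depend on $\delta_1$ (and on $p_K$). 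Your choice $K_1=\tilde K$ removes exactly the degree of freedom needed to reach every $\delta_1\in(0,1)$.
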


\begin{rem} The constant $\delta_1$ is chosen independently from $K$ and $\alpha$.
\end{rem}

Before proving this result, let us explain a bit why we add the lower-bound \eqref{eq:fail_proba_step1}. As we already said, we will see further (in Subsection \ref{subsection:compact_return}) that we need the (uniform) bound on the failure-coupling probability given in \eqref{ineq:lower-bound_fail_proba}. Therefore, for every $j\geqslant1$, we will consider that Step 1 of trial $j$ fails if and only if $\omega\in\Omega_{K,\alpha,\tau_{j-1}}^c\cup\left(\{\xi^2_{\tau_{j-1}+1}\neq\Lambda_\mathbf{x}(\xi^1_{\tau_{j-1}+1})~\text{ or }~|\xi^1_{\tau_{j-1}+1}|> K_1\}\cap\Omega_{K,\alpha,\tau_{j-1}}\right)$ and in this case one immediatly begins Step 3. Hence, for all $j\geqslant1$, thanks to Lemma \ref{lem:success_proba_step1} we get the existence of $K_1$ such that:
\begin{align*}\PP(\tau_j<&\infty|\tau_{j-1}<\infty)\\
&\geqslant\PP\left(\Omega_{K,\alpha,\tau_{j-1}}^c\cup\left(\{\xi^2_{\tau_{j-1}+1}\neq\Lambda_\mathbf{x}(\xi^1_{\tau_{j-1}+1})~\text{ or }~|\xi^1_{\tau_{j-1}+1}|> K_1\}\cap\Omega_{K,\alpha,\tau_{j-1}}\right)\right)\geqslant\delta_1
\end{align*}
and then \eqref{ineq:lower-bound_fail_proba} derives from Lemma \ref{lem:success_proba_step1}.
 This construction may seem artificial but it is necessary to prove Proposition \ref{propo:compact_return}. Moreover, this has no impact on the computation of the rate of convergence to equilibrium since it only affects Step 1. We can now move on the proof of Lemma \ref{lem:success_proba_step1}.
 
\begin{proof}
(i) Set $\mathbf{x}:=\left(X^1_\tau,~X^2_\tau,~\sum_{k=1}^{+\infty}a_k\xi^1_{\tau+1-k},~\sum_{k=1}^{+\infty}a_k\xi^2_{\tau+1-k}\right)$. Conditionnally to $\Omega_{K,\alpha,\tau}$
we have $\mathbf{x}\in B(0,K)^4$ and we can build $(Z_1,Z_2)$ as in Lemma \ref{lem:coupling_lem_step1}.
Let $\xi\sim\mathcal{N}(0,1)$ be independent from $(Z_1,Z_2)$ and set
\begin{equation}\label{eq:def_derive_step1}
(\xi^1_{\tau+1},\xi^2_{\tau+1})=(\mathds{1}_{\Omega_{K,\alpha,\tau}}Z_1+\mathds{1}_{\Omega^c_{K,\alpha,\tau}}\xi,\quad\mathds{1}_{\Omega_{K,\alpha,\tau}}Z_2+
\mathds{1}_{\Omega^c_{K,\alpha,\tau}}\xi).
\end{equation}
Therefore, we deduce by Lemma \ref{lem:coupling_lem_step1} and its proof that for all $K_1\in(0,\tilde{K}]$,
\begin{equation}
\PP(X^1_{\tau+1}=X^2_{\tau+1}|\Omega_{K,\alpha,\tau})\geqslant
\underbrace{\PP(Z_2=\Lambda_\mathbf{x}(Z_1),~|Z_1|\leqslant K_1~|\Omega_{K,\alpha,\tau})}_{=\PP(\xi^2_{\tau+1}=\Lambda_\mathbf{x}(\xi^1_{\tau+1}),~|\xi^1_{\tau+1}|\leqslant K_1~|\Omega_{K,\alpha,\tau})}\geqslant\delta_{K_1}>0.
\end{equation}
And the first part of (i) is proven. It remains to choose the good $K_1\in(0,\tilde{K}]$ to get the second part. Set $p_K:=\PP(\Omega_{K,\alpha,\tau})$ and $\mu:=\mathcal{N}(0,I_d)$, then
\begin{align*}
\PP&\left(\Omega_{K,\alpha,\tau}^c\cup\left(\{\xi^2_{\tau+1}\neq\Lambda_\mathbf{x}(\xi^1_{\tau+1})~\text{ or }~|\xi^1_{\tau+1}|> K_1\}\cap\Omega_{K,\alpha,\tau}\right)\right)\\
&\quad\quad=
1-p_K+p_K\PP\left(\{\xi^2_{\tau+1}\neq\Lambda_\mathbf{x}(\xi^1_{\tau+1})~\text{ or }~|\xi^1_{\tau+1}|> K_1\}|\Omega_{K,\alpha,\tau}\right)\\
&\quad\quad\geqslant 1-p_K+p_K\PP(|\xi^1_{\tau+1}|> K_1|\Omega_{K,\alpha,\tau})\\
&\quad\quad\geqslant 1-p_K+p_K\mu(B(0,K_1)^c)=1-p_K+p_K(1-\mu(B(0,K_1))
\end{align*}
where the last inequality is due to Lemma \ref{lem:coupling_lem_step1} one more time.
Finally, it remains to choose $K_1\in(0,\tilde{K}]$ small enough in order to get $1-p_K+p_K(1-\mu(B(0,K_1))\geqslant\delta_1$.\\

(ii) If $\omega\in\Omega_{K,\alpha,\tau}$, by the previous construction and Lemma \ref{lem:coupling_lem_step1}, 
we have $|g_\tau(\omega)|=|Z_1(\omega)-Z_2(\omega)|\leqslant M_{K}$. And if $\omega\in\Omega_{K,\alpha,\tau}^c$ then $|g_\tau(\omega)|=|\xi(\omega)-\xi(\omega)|=0$ which concludes the proof of (ii).
\end{proof}

\vspace{2mm}

To fix the ideas let us recall what we mean by ``success of Step 1'' and ``failure of Step 1'' of trial $j$ ($j\geqslant1$)~:
\begin{align}
&\{\text{success of Step 1}\}=\Omega_{K,\alpha,\tau_{j-1}}\cap\{\xi^2_{\tau_{j-1}+1}=\Lambda_\mathbf{x}(\xi^1_{\tau_{j-1}+1}),~|\xi^1_{\tau_{j-1}+1}|\leqslant K_1\}\label{event:success_step1}\\
&\{\text{failure of Step 1}\}=\Omega_{K,\alpha,\tau_{j-1}}^c\cup\left(\{\xi^2_{\tau_{j-1}+1}\neq\Lambda_\mathbf{x}(\xi^1_{\tau_{j-1}+1})~\text{ or }~|\xi^1_{\tau_{j-1}+1}|> K_1\}\cap\Omega_{K,\alpha,\tau_{j-1}}\right)\label{event:failure_step1}
\end{align}
where $\mathbf{x}:=\left(X^1_{\tau_{j-1}},~X^2_{\tau_{j-1}},~\sum_{k=0}^{+\infty}a_k\xi^1_{\tau_{j-1}+1-k},~\sum_{k=0}^{+\infty}a_k\xi^2_{\tau_{j-1}+1-k}\right)$.

\subsubsection{Step 2 (sticking step)}\label{subsection:step2}

Step 2 of trial $j$ consists in trying to keep the paths fastened together on successive intervals $I_{j,\ell}$. More precisely, during trial $j$, we set
\begin{align}\label{def:step2_intervals}
&I_{j,0}:=\{\tau^{}_{j-1}+1\},\quad I_{j,1}:=\llbracket\tau^{}_{j-1}+2,\tau^{}_{j-1}+2c_2-1\rrbracket\nonumber\\
\text{ and }\quad\quad&\forall\ell\geqslant2,\quad I_{j,\ell}:=\left\llbracket\tau^{}_{j-1}+c_2s_\ell,\tau^{}_{j-1}+c_2s_{\ell+1}-1\right\rrbracket
\end{align}
where $c_2\geqslant2$ will be chosen further and with
\begin{equation}\label{eq:interval_size}
\forall\ell\geqslant2,\quad s_\ell=\left\{\begin{array}{cc}
2^\ell &\text{ under } (\mathbf{H_{poly}})\\
\ell &\text{ under } (\mathbf{H_{exp}}).
\end{array}\right.
\end{equation}

We denote \begin{equation}\label{def:failure_interval_number}
\ell^*_j:=\sup\{\ell\geqslant1~|~\forall n\in I_{j,\ell-1},~g_{n-1}=g^{(s)}_{n-1}\}
\end{equation} 
where $g^{(s)}_{n-1}$ is the successful-coupling drift defined by
\eqref{eq2:relation_step2}, i.e. $g^{(s)}_{n-1}=-\sum_{l=1}^{+\infty}a_lg_{n-1-l}$. In other words, $I_{j,\ell^*_j}$ is the interval where the failure occurs. If $\{\ell\geqslant1|\forall n\in I_{j,\ell-1},~g_{n-1}=g^{(s)}_{n-1}\}=\emptyset$, we adopt the convention $\ell^*_j=0$, it corresponds to the case where the failure occurs at Step 1.
When $\ell^*_j=+\infty$, trial $j$ is successful.
For a given positive $\alpha$ and $K>0$, we set
\begin{equation}\label{event_fail_after_l_attempts}
\mathcal{B}_{j,\ell}:=\Omega_{K,\alpha,\tau^{}_{j-1}}\cap\{\ell^*_j>\ell\}\quad\forall j\geqslant1,~\ell\geqslant0,
\end{equation}
which means that failure of Step 2 may occur at most after $\ell$ trials. With this notations we get 
\begin{equation}\label{eq:new_expr_success_proba}
\PP(\Delta\tau^{}_j=+\infty|\Omega_{K,\alpha,\tau^{}_{j-1}})=\PP(\text{success of Step 1 }|\Omega_{K,\alpha,\tau^{}_{j-1}})\prod_{\ell=1}^{+\infty}\PP(\mathcal{B}_{j,\ell}|\mathcal{B}_{j,\ell-1})
\end{equation}

where the event $\{\text{success of Step 1}\}$ is defined by \eqref{event:success_step1}.

\begin{rem} There is an infinite product in this expression of the successful-coupling probability. Hence, the size choice of the intervals $I_{j,\ell}$ defined in \eqref{eq:interval_size} will play a significant role in the convergence of the product to a positive limit. 
\end{rem}

In the following lemma, similarly to the above definitions, we consider for a stopping time $\tau$ the intervals $(I_{\tau,\ell})_{\ell\geqslant1}$, the integer $\ell^*_\tau$ and the events $\mathcal{B}_{\tau,\ell}$, replacing $\tau^{}_{j-1}$ by $\tau$.

\begin{lem}\label{lem:success_proba_step2}
Let $K>0$, assume $(\mathbf{H_1})$ and $(\mathbf{H_2})$. Let $\alpha>\frac{1}{2}\vee\left(\frac{3}{2}-\beta\right)$ under $(\mathbf{H_{poly}})$ or $\alpha>0$ different from $\zeta$ under $(\mathbf{H_{exp}})$. Let $\tau$ be a stopping time with respect to $(\mathcal{F}_n)_{n\in\Z}$ (defined in Subsection \ref{subsection:coupling_scheme}) and assume that the system is $(K,\alpha)$-admissible at time $\tau$, then there exists $C_K>0$ such that for $c_2\geqslant2$ large enough the successful drift $g^{(s)}$ satisfies
\[\text{for }\ell=1,\quad \|g^{(s)}\|_{I_{\tau,1}}\leqslant C_K\]
and
\[\forall\ell\geqslant2,\quad\|g^{(s)}\|_{I_{\tau,\ell}}=\left(\sum_{k=\tau+c_2s_\ell}^{\tau+c_2s_{\ell+1}-1}\left|g^{(s)}
_{k-1}\right|^2\right)^{1/2}\leqslant 2^{-\tilde{\alpha}\ell}\]
where $\tilde{\alpha}:=\left\{\begin{array}{ccc}
\min\{\alpha,\beta,\alpha+\beta-1\}-1/2-\varepsilon & \text{for all }\varepsilon>0 &\text{ under } (\mathbf{H_{poly}})\\
\min(\alpha,\zeta) & &\text{ under } (\mathbf{H_{exp}})
\end{array}\right.$.\\
Therefore, for all $\ell\geqslant1$, we can build thanks to Corollary \ref{cor:coupling_step2}
$\left((\xi^1_k)_{k\in I_{\tau,\ell}},(\xi^2_k)_{k\in I_{\tau,\ell}}\right)$ during Step 2 in such a way that
\[\PP(\mathcal{B}_{\tau,1}|\mathcal{B}_{\tau,0})\geqslant\delta_{K}^1
\quad\text{ and }\quad\forall\ell\geqslant2,\quad 
\PP(\mathcal{B}_{\tau,\ell}|\mathcal{B}_{\tau,\ell-1})\geqslant1-2^{-\tilde{\alpha}\ell}\]
where $\delta_{K}^1\in(0,1)$.\\
Moreover, if $2\leqslant\ell^*_\tau<+\infty$, there exists $C_\alpha>0$ independent from $K$ such that
\[\left(\sum_{k=\tau+c_2s_{\ell^*_\tau}}^{\tau+c_2s_{\ell^*_\tau+1}-1}|g_{k-1}|^2\right)^{1/2}\leqslant C_\alpha(\ell^*_\tau+1)\]
and if $\ell^*_\tau=1$, $\|g\|_{I_{\tau,1}}\leqslant C'_K$ for some constant $C'_K>0$.
\end{lem}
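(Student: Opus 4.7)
The strategy is to first use Remark~\ref{rem:reverse_relation_b} to express the successful drift $g^{(s)}$ as an explicit convolution of the ``past noise'' and of the Step~1 drift $g_\tau$, then convert admissibility into a pointwise decay bound for $|g^{(s)}_{\tau+n}|$, and finally pass to an $\ell^2$-bound on each $I_{\tau,\ell}$ before applying Corollary~\ref{cor:coupling_step2}. By construction $g^{(s)}$ is chosen so that $f_n=\sum_{k=0}^\infty a_k g_{n-k}=0$ for every $n\geqslant\tau+1$. Splitting this identity at the index $\tau$ gives, for $n\geqslant 1$,
\begin{equation*}
\sum_{j=0}^n a_{n-j}\,g_{\tau+j}=\bar u_n,\qquad \bar u_n:=-\sum_{k=n+1}^{\infty} a_k\, g_{\tau+n-k},
\end{equation*}
where the admissibility condition~\eqref{eq:adm_1} precisely gives $|\bar u_n|\leqslant v_n$. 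Inverting this convolution by Remark~\ref{rem:reverse_relation_b} (after moving $a_n g_\tau$ to the right-hand side and using $\sum_{k=0}^{n-1}b_k a_{n-k}=-b_n$ for $n\geqslant 1$) yields
\begin{equation*}
g^{(s)}_{\tau+n}=b_n\,g_\tau+\sum_{k=0}^{n-1}b_k\,\bar u_{n-k},\qquad n\geqslant 1,
\end{equation*}
which, combined with the a.s.\ bound $|g_\tau|\leqslant M_K$ supplied by Lemma~\ref{lem:success_proba_step1}(ii), gives $|g^{(s)}_{\tau+n}|\leqslant M_K|b_n|+\sum_{k=0}^{n-1}|b_k|\,v_{n-k}$.

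Under $(\mathbf{H_{poly}})$, substituting $|b_k|\leqslant C_\beta(k+1)^{-\beta}$ and $v_k=(k+1)^{-\alpha}$ and splitting the convolution at $k=n/2$ distinguishes three regimes (both exponents above $1$, exactly one above $1$, both at most $1$) and gives, for any $\varepsilon>0$ sufficiently small,
\begin{equation*}
|g^{(s)}_{\tau+n}|\leqslant C_{K,\varepsilon}\,(n+1)^{-\gamma_\varepsilon},\qquad \gamma_\varepsilon:=\min\{\alpha,\beta,\alpha+\beta-1\}-\varepsilon.
\end{equation*}
The constraints $\alpha>\tfrac{1}{2}\vee(\tfrac{3}{2}-\beta)$ and $\beta>\tfrac{1}{2}$ make $\gamma_\varepsilon>\tfrac{1}{2}$ for $\varepsilon$ small enough. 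Under $(\mathbf{H_{exp}})$, a direct geometric-series computation (legitimate because $\alpha\ne\zeta$) yields $|g^{(s)}_{\tau+n}|\leqslant C_K e^{-\min(\alpha,\zeta)n}$. Squaring these pointwise bounds and summing over $I_{\tau,\ell}$ (whose length and starting offset past $\tau$ are both of order $c_2 s_\ell$) produces $\|g^{(s)}\|_{I_{\tau,\ell}}^2\leqslant C_K\,c_2^{1-2\gamma_\varepsilon}\,2^{\ell(1-2\gamma_\varepsilon)}$ in the polynomial case (with an analogous exponential estimate); choosing $c_2$ large enough to absorb the resulting prefactors delivers $\|g^{(s)}\|_{I_{\tau,\ell}}\leqslant 2^{-\tilde\alpha\ell}$ for $\ell\geqslant 2$, with $\tilde\alpha$ as in the statement, while $I_{\tau,1}$ has bounded length and provides only a constant bound $C_K$.

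Plugging these into Corollary~\ref{cor:coupling_step2} — part~(i) with $b=C_K$ for $\ell=1$ (yielding a success probability $\delta^1_K\in(0,1)$) and part~(ii) with $b=2^{-\tilde\alpha\ell}<1$ for $\ell\geqslant 2$ large enough (yielding $1-2^{-\tilde\alpha\ell}$) — gives the two probability lower bounds. The same corollary also ensures the a.s.\ inequality $\|g\|_I=\|\xi^1-\xi^2\|_I\leqslant M_b$ on every interval on which Step~2 is attempted; applying it on the failure interval with $b=2^{-\tilde\alpha\ell^*_\tau}$ for $\ell^*_\tau\geqslant 2$ and using that $M_b=\max(4b,-2\log(b/8))$ is asymptotically equivalent to $2\tilde\alpha\ell^*_\tau\log 2$ produces $\|g\|_{I_{\tau,\ell^*_\tau}}\leqslant C_\alpha(\ell^*_\tau+1)$ with $C_\alpha$ independent of $K$; for $\ell^*_\tau=1$ the bound reduces to $M_{C_K}=:C'_K$. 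The main technical obstacle is the convolution estimate in the polynomial case: the optimal exponent $\min\{\alpha,\beta,\alpha+\beta-1\}$ emerges only after carefully separating the three regimes, and the constraint $\alpha+\beta>\tfrac{3}{2}$ is exactly what keeps $\gamma_\varepsilon>\tfrac{1}{2}$ in the worst regime.
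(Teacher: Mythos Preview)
Your proof is correct and follows essentially the same approach as the paper. Both arguments invert the convolution $\sum_{k=0}^n a_k g^{(s)}_{\tau+n-k}=u_n$ via Remark~\ref{rem:reverse_relation_b} to obtain $g^{(s)}_{\tau+n}=\sum_{k=0}^n b_k u_{n-k}$ (your separation of the $b_n g_\tau$ term is just making explicit what the paper absorbs into $u_0:=g_\tau$), then bound the resulting convolution and sum over $I_{\tau,\ell}$. The only cosmetic difference is that the paper packages the polynomial convolution estimate $\sum_{k=0}^n (k+1)^{-\beta}(n+1-k)^{-\alpha}\leqslant C(n+1)^{-\min\{\alpha,\beta,\alpha+\beta-1\}+\varepsilon}$ as a standalone technical lemma (Lemma~\ref{lem:technical}), whereas you sketch it inline via the $k\leqslant n/2$ splitting; and you spell out the ``Moreover'' clause on the failure interval via the $M_b$ bound of Corollary~\ref{cor:coupling_step2}, which the paper leaves implicit.
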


\begin{rem} $\rhd$ Under hypothesis $(\mathbf{H_{poly}})$ the condition $\alpha>\frac{1}{2}\vee\left(\frac{3}{2}-\beta\right)$ will ensure that \\$\min\{\alpha,\beta,\alpha+\beta-1\}-1/2>0$. \\
$\rhd$ In the polynomial case, for technical reasons $\tilde{\alpha}$ depends on $\varepsilon>0$.
This expression allows us to put together different cases and simplify the lemma. Indeed, if $(\alpha,\beta)\notin\{1\}\times(0,1]\cup(0,1]\times\{1\}$, we can take $\varepsilon=0$. 
\end{rem}

To prove this lemma we will use in the \textit{polynomial case} the following technical result which a more precise statement and a proof are given in Appendix \ref{appendix:proof_technical_lem}.

\begin{lem}[Technical lemma] \label{lem:technical}
Let $\alpha>0$ and $\beta>0$ such that $\alpha+\beta>1$. Then, there exists $C(\alpha,\beta)>0$ such that for every $\varepsilon>0$,
\[\forall n\geqslant0,\quad\sum_{k=0}^{n}(k+1)^{-\beta}(n+1-k)^{-\alpha}\leqslant C(\alpha,\beta)~
(n+1)^{-\min\{\alpha,\beta,\alpha+\beta-1\}+\varepsilon}.\]
When $(\alpha,\beta)\notin\{1\}\times(0,1]\cup(0,1]\times\{1\}$, we can take $\varepsilon=0$ in the previous inequality.
\end{lem}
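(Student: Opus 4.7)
My plan is to prove the bound by splitting the convolution sum at $k = \lfloor n/2 \rfloor$ and estimating the two halves separately. On each half, one of the two factors is bounded by a power of $(n+1)$ and can be pulled out; the remaining partial sum is a Riemann-sum-type quantity whose behavior depends on whether its exponent is greater than, equal to, or less than $1$.

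More precisely, I would write
\[
\sum_{k=0}^{n}(k+1)^{-\beta}(n+1-k)^{-\alpha}=S_1(n)+S_2(n),
\]
where $S_1$ runs over $0\le k\le\lfloor n/2\rfloor$ and $S_2$ over the remaining indices. On $S_1$, since $n+1-k\ge (n+1)/2$, I factor out $(n+1-k)^{-\alpha}\le 2^\alpha(n+1)^{-\alpha}$ and reduce to controlling $\sum_{k=0}^{\lfloor n/2\rfloor}(k+1)^{-\beta}$. Standard comparison with an integral gives a bound of order $1$ when $\beta>1$, of order $\log(n+2)$ when $\beta=1$, and of order $(n+1)^{1-\beta}$ when $\beta<1$. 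Hence
\[
S_1(n)\ \le\ C(n+1)^{-\alpha}\cdot\bigl(\mathbf{1}_{\beta>1}+\log(n+2)\,\mathbf{1}_{\beta=1}+(n+1)^{1-\beta}\,\mathbf{1}_{\beta<1}\bigr).
\]
By the symmetric argument on $S_2$ (factoring out $(k+1)^{-\beta}\le C(n+1)^{-\beta}$ and summing $(n+1-k)^{-\alpha}$ from $k=\lceil n/2\rceil$ to $n$), I obtain the analogous estimate with the roles of $\alpha$ and $\beta$ swapped.

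Combining these and taking the worst of the two halves, the resulting exponent is exactly $\min\{\alpha,\beta,\alpha+\beta-1\}$: when both $\alpha,\beta>1$ the ``$\alpha+\beta-1$'' term is dominated and the minimum is $\min(\alpha,\beta)$; when $\alpha<1$ (resp.\ $\beta<1$) the half giving the power $\alpha+\beta-1$ appears via $(n+1)^{-\alpha}\cdot(n+1)^{1-\beta}$ (resp.\ the symmetric term); and the hypothesis $\alpha+\beta>1$ guarantees that $\alpha+\beta-1>0$, which keeps the bound meaningful in the doubly sub-critical case $\alpha,\beta<1$.

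The only place where $\varepsilon$ is truly needed is in handling the logarithmic factor coming from the critical case: $\log(n+2)(n+1)^{-\min\{\alpha,\beta,\alpha+\beta-1\}}$ is absorbed into $C_\varepsilon(n+1)^{-\min\{\alpha,\beta,\alpha+\beta-1\}+\varepsilon}$ using $\log(n+2)\le C_\varepsilon(n+1)^{\varepsilon}$. A careful bookkeeping of the four sub-cases shows that the logarithm appears in the dominant term only when $\alpha=1$ with $\beta\in(0,1]$ (the $\alpha=1$ log from $S_2$ multiplies $(n+1)^{-\beta}$, which equals the minimum exponent) or symmetrically when $\beta=1$ with $\alpha\in(0,1]$. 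Outside the excluded set $\{1\}\times(0,1]\cup(0,1]\times\{1\}$, every logarithm can be absorbed into a strictly better power (since the minimum exponent is strictly separated from the log-producing exponent), and the bound holds with $\varepsilon=0$. The main bookkeeping obstacle is this final case-by-case verification that the log is dominated whenever $(\alpha,\beta)$ lies off the exceptional lines; everything else is routine comparison with integrals.
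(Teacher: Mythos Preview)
Your approach is correct and is essentially the same as the paper's: split the sum at the midpoint, on each half bound the ``far'' factor uniformly by a power of $(n+1)$, and estimate the remaining partial sum by comparison with an integral, with three subcases according to whether the exponent is $<1$, $=1$, or $>1$. The paper phrases the $\alpha<1$ case as a Riemann-sum limit rather than factoring and integrating, but this leads to the same exponent $-(\alpha+\beta-1)$, and your identification of the exceptional set where the logarithm cannot be absorbed matches the paper's case analysis.
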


We can now move on the proof of Lemma \ref{lem:success_proba_step2}.

\begin{proof} Let us prove the first part of the lemma, namely the upper-bound of the $\ell^2$ norm for the successful-coupling drift term on the intervals $I^{\tau}_\ell$. Indeed, the second part is just an application of Corollary \ref{cor:coupling_step2}.
Since the system is $(K,\alpha)$-admissible at time $\tau$, we get by \eqref{eq:adm_1}
\[\forall n\geqslant0,\left|\sum_{k=n+1}^{+\infty}a_kg_{\tau+n-k}\right|\leqslant v_n.\]
But, if Step 2 is successful, we recall that by \eqref{eq2:relation_step2} the successful drift satisfies
$~g^{(s)}_{\tau+n}=-\sum_{k=1}^{+\infty}a_kg_{\tau+n-k}$ for all $n\geqslant1$,
hence 
\[\forall n\geqslant1,~\sum_{k=0}^{n}a_kg^{(s)}_{\tau+n-k}=\underbrace{-\sum_{k=n+1}^{+\infty}a_kg_{\tau+n-k}}_{=:u_n}\quad\text{ and we set }\quad
u_0:=g^{(s)}_{\tau}.\]
Therefore thanks to Remark \ref{rem:reverse_relation_b}, this is equivalent to 
$~g^{(s)}_{\tau+n}=\sum_{k=0}^{n}b_ku_{n-k}$ for all $n\geqslant0$.
By the admissibility assumption, we have $\forall k\in\{0,\dots,n-1\},~ |u_{n-k}|\leqslant v_{n-k}$ and by Lemma \ref{lem:success_proba_step1} (ii), $|u_0|\leqslant M_{K}$.
Hence, we get
\begin{align}\label{eq:proof_lem_proba_success_step2}
\forall n\geqslant0,~|g^{(s)}_{\tau+n}|&\leqslant M_{K}\sum_{k=0}^{n}|b_k|v_{n-k}.
\end{align}

$\bullet$ \underline{Polynomial case:} Assume $(\mathbf{H_{poly}})$. Then for all $n\geqslant0$, $v_n=(n+1)^{-\alpha}$ with $\alpha>\frac{1}{2}\vee\left(\frac{3}{2}-\beta\right)$.

Here, \eqref{eq:proof_lem_proba_success_step2} is equivalent to 
\begin{align*}
\forall n\geqslant0,~|g^{(s)}_{\tau+n}|&\leqslant M_{K}C_\beta\sum_{k=0}^{n}(k+1)^{-\beta}(n+1-k)^{-\alpha}\\
&\leqslant M'_{K}~(n+1)^{-\min\{\alpha,\beta,\alpha+\beta-1\}+\varepsilon},
\end{align*}
for all $\varepsilon>0$ by applying the technical lemma \ref{lem:technical} and setting $M'_{K}:=C(\alpha,\beta)M_{K}C_\beta$.\\ We then set $\tilde{\alpha}:=\min\{\alpha,\beta,\alpha+\beta-1\}-1/2-\varepsilon$.

Hence, for all $\ell\geqslant2$,
\begin{align*}
\|g^{(s)}\|_{I_{\tau,\ell}}&=\left(\sum_{k=\tau+c_22^{\ell}}^{\tau+c_22^{\ell+1}-1}\left|g^{(s)}
_{k-1}\right|^2\right)^{1/2}\\
&\leqslant M'_K\left(\sum_{k=c_22^{\ell}}^{c_22^{\ell+1}-1}
k^{-2\tilde{\alpha}-1}\right)^{1/2}\\
&\leqslant M'_K\times\left(c_22^\ell\times(c_22^\ell)^{-2\tilde{\alpha}-1}\right)^{1/2}
= M'_K c_2^{-\tilde{\alpha}}2^{-\tilde{\alpha}\ell}.
\end{align*}
It remains to choose $c_2\geqslant~2\vee(M'_K)^{1/\tilde{\alpha}}$ to get the desired bound.\\

$\bullet$ \underline{Exponential case:} Assume $(\mathbf{H_{exp}})$. Then for all $n\geqslant0$, $v_n=e^{-\alpha n}$ with $\alpha>0$ and $\alpha\neq\zeta$.

Here, \eqref{eq:proof_lem_proba_success_step2} is equivalent to  
\begin{align*}
\forall n\geqslant0,~|g^{(s)}_{\tau+n}|&\leqslant M_{K}C_\zeta\sum_{k=0}^{n}e^{-\zeta k}e^{-\alpha(n-k)}\\
&\leqslant M'_{K}~e^{-\min\{\alpha,\zeta\}n},
\end{align*}
where we have set $M'_{K}:=M_{K}C_\zeta$. We then define $\tilde{\alpha}:=\min(\alpha,\zeta)$.\\
Hence, for all $\ell\geqslant2$,
\begin{align*}
\|g^{(s)}\|_{I^{(\tau)}_\ell}&=\left(\sum_{k=\tau+c_2\ell}^{\tau+c_2(\ell+1)-1}\left|g^{(s)}
_{k-1}\right|^2\right)^{1/2}\\
&\leqslant M'_{K}\left(\sum_{k=c_2\ell}^{c_2(\ell+1)-1}
e^{-2\tilde{\alpha}(k-1)}\right)^{1/2}\\
&\leqslant M'_{K}\times\left(\frac{e^{-2\tilde{\alpha}(c_2\ell-2)}}{2\tilde{\alpha}}\left[1-e^{-2\tilde{\alpha}c_2}\right]\right)^{1/2}\quad\text{ (by integral upper-bound)}\\
&\leqslant e^{-\tilde{\alpha}\ell}\quad\text{ (by choosing $c_2\geqslant 2$ large enough)}\\
&\leqslant 2^{-\tilde{\alpha}\ell}.
\end{align*}

For $\ell=1$, in both polynomial and exponential cases, the same approach gives us the existence of $C_K>0$ such that
\[\|g^{(s)}\|_{I_{\tau,1}}\leqslant C_K.\]
\end{proof}

By combining Lemma \ref{lem:success_proba_step1}, Lemma \ref{lem:success_proba_step2} and the expression 
\eqref{eq:new_expr_success_proba} we finally get Proposition \ref{prop:lower-bound_success_proba}.

\section{$(K,\alpha)$-admissibility}\label{section:admissibility}
\subsection{On condition \eqref{eq:adm_1}}

Let $\Delta t_3^{(j)}$ be the duration of Step 3 of trial $j$ for $j\geqslant1$.
The purpose of the next proposition is to prove that thanks to a calibration of $\Delta t_3^{(j)}$, one satisfies almost surely condition \eqref{eq:adm_1} at time $\tau_j$.

\begin{propo}\label{propo:adm1} 
Assume $(\mathbf{H_{1}})$ and $(\mathbf{H_{2}})$.
Let $\alpha\in\left(\frac{1}{2}\vee\left(\frac{3}{2}-\beta\right),\rho\right)$ if we are under $(\mathbf{H_{poly}})$ and $\alpha\in(0,\lambda)$ different from $\zeta$ if we are under $(\mathbf{H_{exp}})$. Assume that for all $j\geqslant 1$, 
\[\Delta t_3^{(j)}=\left\{\begin{array}{lll}
t_*\varsigma^j 2^{\theta \ell^*_j}&\text{ with }\theta>\left(2(\rho-\alpha)\right)^{-1}&\text{ under }(\mathbf{H_{poly}})\\
t_*+\varsigma^j+ \theta \ell^*_j&\text{ with }\theta>0&\text{ under }(\mathbf{H_{exp}})\\
\end{array}\right.\] 
where $\ell^*_j$ is defined in \eqref{def:failure_interval_number} and with $\varsigma>1$ arbitrary. Then for every $K>0$, there exists a choice of $t_*$ such that, for all $j\geqslant0$, condition \eqref{eq:adm_1} is a.s. true at time $\tau^{}_{j}$ on the event $\{\tau^{}_j<+\infty\}$.
In other words, for all $j\geqslant 0$,
\[\PP(\Omega^1_{\alpha,\tau^{}_j}|\{\tau^{}_j<+\infty\})=1.\]
\end{propo}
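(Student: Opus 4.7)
The plan is to establish, on the event $\{\tau_j < +\infty\}$, a deterministic upper bound on
\[W_j(n) \;:=\; \sum_{k \geqslant n+1} a_k\, g_{\tau_j + n - k}\]
depending only on the realization of the trial outcomes $(\tau_i, \ell^*_i)_{i \leqslant j}$. The case $j = 0$ is immediate, as $g_m = 0$ for $m < \tau_0$ by \eqref{eq:drift_before_coupling}, so $W_0(n) \equiv 0 \leqslant v_n$. For $j \geqslant 1$ I would decompose the support of $g$ by trial: let $J_i := \{\tau_{i-1}\} \cup I_{i,1} \cup \cdots \cup I_{i,\ell^*_i}$ (with $J_i = \{\tau_{i-1}\}$ when $\ell^*_i = 0$) be the ``active'' indices of trial $i$, i.e.\ those where $g$ can be nonzero. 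Since $g \equiv 0$ on every Step~3 window,
\[W_j(n) \;=\; \sum_{i=1}^{j} S_i(n), \qquad S_i(n) := \sum_{m \in J_i} a_{\tau_j + n - m}\, g_m.\]
The crucial geometric observation is that for every $m \in J_i$,
\[\tau_j - m \;\geqslant\; D_{i,j} \;:=\; \sum_{l = i}^{j} \Delta t_3^{(l)},\]
because $J_i \subset [\tau_{i-1},\, \tau_i - \Delta t_3^{(i)})$ and each subsequent trial contributes at least its own Step~3 duration to $\tau_j - \tau_i$.

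Each term $|S_i(n)|$ I would bound pointwise by $\bigl(\sup_{m \in J_i}|a_{\tau_j+n-m}|\bigr)\cdot \sum_{m \in J_i}|g_m|$. The first factor is controlled by the decay of $(|a_k|)$ applied at $k \geqslant D_{i,j}+n$, and Cauchy--Schwarz yields $\sum_{m \in J_i}|g_m| \leqslant |J_i|^{1/2}\|g\|_{J_i}$; Lemmas \ref{lem:success_proba_step1}(ii) and \ref{lem:success_proba_step2} then give $\|g\|_{J_i} \leqslant C(\ell^*_i+1)$ (with the failed interval $I_{i,\ell^*_i}$ providing the dominant contribution), while $|J_i| \leqslant C\, 2^{\ell^*_i}$ under $(\mathbf{H_{poly}})$ and $|J_i| \leqslant C(\ell^*_i+1)$ under $(\mathbf{H_{exp}})$. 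In the polynomial case the core computation uses the elementary interpolation
\[(D + n + 1)^{-\rho} \;\leqslant\; D^{-\eta\rho}(n+1)^{-(1-\eta)\rho} \qquad (\eta \in [0,1]),\]
derived from $D + n + 1 \geqslant D^{\eta}(n+1)^{1-\eta}$, with $\eta$ chosen strictly in the window $\bigl(\tfrac{1}{2\rho\theta},\, 1 - \tfrac{\alpha}{\rho}\bigr)$. This window is non-empty \emph{exactly} under the hypothesis $\theta > 1/(2(\rho-\alpha))$, and the two endpoints play complementary roles: $\eta < 1 - \alpha/\rho$ yields $(1-\eta)\rho > \alpha$ and hence the target decay $(n+1)^{-\alpha}$, while $\eta > 1/(2\rho\theta)$ forces $1/2 - \eta\rho\theta < 0$, absorbing the polynomial factor in $\ell^*_i$. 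Combined with $D_{i,j} \geqslant t_* \varsigma^i 2^{\theta\ell^*_i}$, this gives
\[|S_i(n)| \;\leqslant\; C\, t_*^{-\eta\rho}\,(n+1)^{-\alpha}\,\varsigma^{-\eta\rho i}\,\Bigl[2^{(1/2 - \eta\rho\theta)\ell^*_i}(\ell^*_i+1)\Bigr],\]
in which the bracketed factor is uniformly bounded in $\ell^*_i$ and $\sum_{i \geqslant 1}\varsigma^{-\eta\rho i}$ converges. Summing over $i$ and choosing $t_*$ large enough (once and for all, independently of $j$) produces $|W_j(n)| \leqslant (n+1)^{-\alpha} = v_n$.

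The exponential case follows the same scheme and is softer: the bound $|a_k| \leqslant C_\lambda e^{-\lambda k}$ together with $D_{i,j} \geqslant t_* + \varsigma^i + \theta\ell^*_i$ factorises the estimate into $e^{-\lambda n}$ (absorbed by $e^{-\alpha n}$ thanks to $\lambda > \alpha$), a super-geometrically summable $e^{-\lambda\varsigma^i}$, and $e^{-\lambda\theta\ell^*_i}(\ell^*_i+1)^{3/2}$ (uniformly bounded since $\lambda\theta > 0$), with a global prefactor $e^{-\lambda t_*}$ that vanishes as $t_* \to \infty$. The main technical obstacle is the polynomial interpolation: the window for $\eta$ closes precisely at $\theta = 1/(2(\rho-\alpha))$, so the hypothesis on $\theta$ is sharp for this argument, and any attempt to replace the pointwise supremum of $(|a_k|)$ on $J_i$ by an $\ell^2$ Cauchy--Schwarz bound would instead require $\alpha < \rho - 1/2$, a strictly stronger assumption.
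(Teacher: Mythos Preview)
Your argument is correct and follows essentially the same route as the paper. Both proofs decompose the memory sum by trial, use that $g$ vanishes on each Step~3 window, bound the contribution of trial $i$ by (size of active window)$^{1/2}\times\|g\|_{\ell^2}\times(\sup|a_k|)$ over the relevant index range, invoke Lemmas~\ref{lem:success_proba_step1}(ii) and~\ref{lem:success_proba_step2} to control $\|g\|_{\ell^2}$ by $C_K(\ell^*_i+1)$, and then split the decay of $a$ between $n$ and the accumulated Step~3 delay. The only cosmetic differences are: the paper applies Cauchy--Schwarz directly to $\sum a_k g_k$ and then bounds $(\sum a_k^2)^{1/2}$ by $|J_i|^{1/2}\sup|a_k|$ (which is literally the same estimate you obtain via triangle inequality plus Cauchy--Schwarz on $\sum|g_m|$); and the paper's interpolation $(x+y)^{-\rho}\leqslant x^{-(\rho-\alpha)}y^{-\alpha}$ is exactly your $\eta$-interpolation at the endpoint $\eta=1-\alpha/\rho$, where the window condition $1/2-\eta\rho\theta<0$ reduces to $\theta>1/(2(\rho-\alpha))$ directly. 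Your closing remark about an ``$\ell^2$ Cauchy--Schwarz bound'' requiring $\alpha<\rho-1/2$ is therefore slightly off-target: the paper \emph{does} use Cauchy--Schwarz, just not the tail estimate $\sum_{k\geqslant D}a_k^2\leqslant CD^{1-2\rho}$ you seem to have in mind, and its effective bound coincides with yours.
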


\begin{rem}\label{rem:exp_rate} $\rhd$ With Proposition \ref{propo:adm1} in hand, we are now in position to discuss the statement of Theorem \ref{thm:principal} (iii) as mentioned in Remark \ref{rem:thm_exp}. Adapting the proof of \eqref{bound_speed_p>1} (by taking an exponential Markov inequality), it appears that a fundamental tool to get exponential rate of convergence to equilibrium would be:
for all $j\geqslant1$, there exist $\lambda,C>0$ such that 
\begin{equation}\label{eq:control_moment_exp_rate}
\E[e^{\lambda\Delta\tau_j}|\mathcal{F}_{\tau_{j-1}}]\leqslant C
\end{equation} 
where $\Delta\tau_j:=\tau_j-\tau_{j-1}$. But, under $(\mathbf{H_{exp}})$, Proposition \ref{propo:adm1} shows that $\Delta\tau_j\geqslant\varsigma^j$ with $\varsigma>1$. This dependency on $j$ conflicts with the necessary control of the conditional expectation previously cited. \\
$\rhd$ Now, let us focus on the particular case of a finite memory: there exists $m\geqslant1$ such that $a_k=0$ for all $k> m$. As we saw in the second part of Subsection \ref{subsection:exponantial_rate}, $(|b_k|)$ decays exponentially fast. Then, an adjustment\footnote{In this case, we can fix the duration of Step 3 equal to $m$ since the memory only involves the $m$ previous times.} of the proof of Proposition \ref{propo:adm1} would lead to \eqref{eq:control_moment_exp_rate} and perhaps in such case, our robust but general approach could be simplified.
\end{rem}

\begin{proof}
Let us begin by the first coupling trial, in other words for $j=0$.
We recall that $g_k=0$ for all $k<\tau_0$ (see \eqref{eq:drift_before_coupling}), therefore 
\[\forall n\geqslant0,\left|\sum_{k=n+1}^{+\infty}a_kg_{\tau_0+n-k}\right|=0\leqslant v_n\]
and then condition \eqref{eq:adm_1} is a.s. true at time $\tau^{}_{0}$.
Now, we assume $j\geqslant1$ and we work on the event \[\{\tau^{}_j<+\infty\}\quad(\supset\{\tau^{}_m<+\infty\}\text{ for all }0\leqslant m\leqslant j-1).\]
Let us prove that on this event we have for all $n\geqslant0,~\left|\sum_{k=n+1}^{+\infty}a_kg_{\tau^{}_{j}+n-k}\right|\leqslant v_n.$
Set $u_n:=\sum_{k=n+1}^{+\infty}a_kg_{\tau^{}_{j}+n-k}$. Since $g_k=0$ for all $k<\tau_0$, we get
$~u_n=\sum_{k=n+1}^{n+\tau^{}_j-\tau_0}a_kg_{\tau^{}_{j}+n-k}.$
Let us now separate the right term into the contributions of the different coupling trials.
We get

\begin{align*}
u_n=\sum_{m=1}^{j}\left(\sum_{k=n+\tau^{}_j-\tau^{}_m+1}^{n+\tau^{}_j-\tau^{}_{m-1}}a_k~g_{\tau^{}_{j}+n-k}\right)
=\sum_{m=1}^{j}\underbrace{\left(\sum_{k=\tau^{}_{m-1}}^{\tau^{}_{m}-1}a_{n+\tau^{}_j-k}~g_{k}\right)
}_{(\star)_m}
\end{align*}

and $(\star)_m$ corresponds to the contribution of trial $m$, divided into two parts: success and failure.
We have now to distinguish two cases: \\

\underline{\textbf{First case:}} $\ell^*_m\geqslant1$, in other words the failure occurs during Step 2. We recall that in this case the system was automatically $(K,\alpha)$-admissible at time $\tau_{m-1}$, which will allow us to use Lemma \ref{lem:success_proba_step2} on $\tau_{m-1}$.

Then, since $g_k=0$ on $\llbracket\tau_{m-1}+c_2s_{\ell_m^*+1},\tau_m-1\rrbracket$ by definition of Step 3 of the coupling procedure,
\begin{align*}
(\star)_m&=\sum_{k=\tau^{}_{j
m-1}}^{\tau^{}_{m-1}+c_2s_{\ell_m^*+1}-1}a_{n+\tau^{}_j-k}~g_{k}\\
&=\underbrace{\sum_{k=0}^{c_2s_{\ell_m^*}-1}a_{n+\tau^{}_j-\tau^{}_{m-1}-k}~g_{\tau^{}_{m-1}+k}}_{\text{success}}
+\underbrace{\sum_{k=c_2s_{\ell_m^*}}^{c_2s_{\ell_m^*+1}-1}a_{n+\tau^{}_j-\tau^{}_{m-1}-k}~g_{\tau^{}_{m-1}+k}}_{\text{failure}}.
\end{align*}

We have now to make the distinction between the polynomial and the exponential case.\\

$\rhd$ Under $(\mathbf{H_{poly}})$: $s_\ell=2^{\ell}$, $|a_k|\leqslant C_\rho(k+1)^{-\rho}$ and then $a_k^2\leqslant C_\rho^2(k+1)^{-2\rho}$.\\

Using Cauchy-Schwarz inequality, the domination assumption on $(a_k)$, and the fact that \[n+\tau^{}_j-\tau^{}_{m-1}-k+1\geqslant n+\tau^{}_{m}-\tau^{}_{m-1}-k+1=\Delta t_3^{(m)}+c_22^{\ell_m^*+1}+n-k\] we get,
\begin{align*}
|\text{success}|&\leqslant\left(\sum_{k=0}^{c_22^{\ell_m^*}-1}a^2_{n+\tau^{}_j-\tau^{}_{m-1}-k}\right)^{1/2}\left(\sum_{k=0}^{c_22^{\ell_m^*}-1}\left|g_{\tau_{m-1}+k}\right|^2\right)^{1/2}\\
&\leqslant C_\rho\left(\sum_{k=0}^{c_22^{\ell_m^*}-1}(n+\tau^{}_j-\tau^{}_{m-1}-k+1)^{-2\rho}\right)^{1/2}\left(\sum_{k=0}^{c_22^{\ell_m^*}-1}\left|g_{\tau_{m-1}+k}\right|^2\right)^{1/2}\\
&\leqslant C_\rho\left(\sum_{k=0}^{c_22^{\ell_m^*}-1}(\Delta t_3^{(m)}+c_22^{\ell_m^*+1}+n-k)^{-2\rho}\right)^{1/2}\left(\sum_{k=0}^{c_22^{\ell_m^*}-1}\left|g_{\tau_{m-1}+k}\right|^2\right)^{1/2}\\
&\quad= C_\rho\left(\sum_{k=\Delta t_3^{(m)}+c_22^{\ell_m^*}+n+1}^{\Delta t_3^{(m)}+c_22^{\ell_m^*+1}+n}k^{-2\rho}\right)^{1/2}\left(\sum_{k=0}^{c_22^{\ell_m^*}-1}\left|g_{\tau_{m-1}+k}\right|^2\right)^{1/2}\\
&\leqslant C_\rho\left(c_22^{\ell^*_m}(n+\Delta t_3^{(m)})^{-2\rho}\right)^{1/2}\left(\sum_{k=0}^{c_22^{\ell_m^*}-1}\left|g_{\tau_{m-1}+k}\right|^2\right)^{1/2}.\\
\end{align*}
By the same arguments, we obtain
\begin{equation*}
|\text{failure}|
\leqslant C_\rho\left(c_22^{\ell^*_m}(n+\Delta t_3^{(m)})^{-2\rho}\right)^{1/2}\left(\sum_{k=c_22^{\ell_m^*}}^{c_22^{\ell_m^*+1}-1}\left|g_{\tau^{}_{m-1}+k}\right|^2\right)^{1/2}.
\end{equation*}

Hence, by the triangular inequality, we have 
\[\left|(\star)_m\right|\leqslant C_\rho\sqrt{c_2}2^{\ell^*_m/2}(n+\Delta t_3^{(m)})^{-\rho}\left[
\underset{\textbf{(1)}}{\left(\sum_{k=0}^{c_22^{\ell_m^*}-1}\left|g_{\tau^{}_{m-1}+k}\right|^2\right)^{1/2}}+
\underset{\textbf{(2)}}{\left(\sum_{k=c_22^{\ell_m^*}}^{c_2^{\ell_m^*+1}-1}\left|g_{\tau^{}_{m-1}+k}\right|^2\right)^{1/2}}\right].\]

Since $\ell^*_j\geqslant1$, by Lemma \ref{lem:success_proba_step1} (ii) and Lemma \ref{lem:success_proba_step2}, we have
\[\textbf{(1)}\leqslant M_K + \sum_{\ell=1}^{\ell_m^*-1}\|g^{(s)}\|_{I_\ell}\leqslant M_K+C_K+\sum_{\ell=2}^{+\infty}2^{-\tilde{\alpha}\ell}=:\tilde{C}_K\]
and
\[\textbf{(2)}\leqslant \max(C_\alpha,C'_K)(\ell^*_m+1).\]

Therefore

\begin{equation}\label{majo_poly}
\left|(\star)_m\right|\leqslant C^{(2)}_K~2^{\ell^*_m/2} (\ell^*_m+1)\left(n+\Delta t_3^{(m)}\right)^{-\rho}  
\end{equation}
where $C^{(2)}_K:=\max(C_\alpha,C'_K,\tilde{C}_K)$.

Moreover, recall that under $(\mathbf{H_{poly}})$
\begin{equation}\label{def:duration_step3}
\Delta t_3^{(m)}=t_*\varsigma^m2^{\theta\ell_m^*}~\text{ with }~\theta>(2(\rho-\alpha))^{-1}\text{ and }\varsigma>1.
\end{equation}

Plugging the definition of $\Delta t_3^{(m)}$ into \eqref{majo_poly} and using that for all $x,y>0,\quad (x+y)^{-\rho}\leqslant x^{-(\rho-\alpha)}y^{-\alpha}$ 
\begin{align}
\left|(\star)_m\right|&\leqslant C^{(2)}_K2^{\ell^*_m/2} (\ell^*_m+1)(n+\Delta t_3^{(m)})^{-\rho}  \nonumber\\
&\leqslant C^{(2)}_K(\ell_m^*+1)2^{(1/2-\theta(\rho-\alpha))\ell_m^*}(t_*\varsigma^m)^{-(\rho-\alpha)}(n+1)^{-\alpha}\label{eq:majo_poly2}.
 \end{align}

Since $\theta>(2(\rho-\alpha))^{-1}$ we have
\[C_{\alpha,K}=\sup_{\ell^*>0}C^{(2)}_K(\ell^*+1)2^{(1/2-\theta(\rho-\alpha))\ell^*}<+\infty,\]
and \eqref{eq:majo_poly2} yields
\begin{equation}\label{final_majo_poly}
\left|(\star)_m\right|\leqslant 
C_{\alpha,K} (t_*\varsigma^m)^{-(\rho-\alpha)}(n+1)^{-\alpha}\quad\text{ under }(\mathbf{H_{poly}}).
\end{equation}

\vspace{1cm}

$\rhd$ Under $(\mathbf{H_{exp}})$: $s_\ell=\ell$, $|a_k|\leqslant C_\lambda e^{-\lambda k}$ and then $a_k^2\leqslant C_\lambda^2e^{-2\lambda k}$.\\

Since the proof is almost the same in the exponential case, we will go faster and skip some details.\\
Using again Cauchy-Schwarz inequality, the domination assumption on $(a_k)$, and the fact that \[n+\tau^{}_j-\tau^{}_{m-1}-k\geqslant n+\tau^{}_{m}-\tau^{}_{m-1}-k=\Delta t_3^{(m)}+c_2(\ell_m^*+1)+n-k-1\] we get

\begin{align*}
|\text{success}|\leqslant\frac{C_\lambda e^\lambda}{\sqrt{2\lambda}}e^{-\lambda(n+\Delta t_3^{(m)})}\left(\sum_{k=0}^{c_22^{\ell_m^*}-1}\left|g_{\tau_{m-1}+k}\right|^2\right)^{1/2}
\end{align*}
and by the same arguments,
\begin{align*}
|\text{failure}|&\leqslant\frac{C_\lambda e^\lambda}{\sqrt{2\lambda}}e^{-\lambda(n+\Delta t_3^{(m)})}\left(\sum_{k=c_2\ell_m^*}^{c_2(\ell_m^*+1)-1}\left|g_{\tau^{}_{m-1}+k}\right|^2\right)^{1/2}.
\end{align*}

As in the polynomial case, by using Lemma \ref{lem:success_proba_step1} and \ref{lem:success_proba_step2} we get the existence of $C_K^{(3)}>0$ such that

\begin{equation}\label{majo_exp}
\left|(\star)_m\right|\leqslant C^{(3)}_K (\ell^*_m+1)e^{-\lambda(n+\Delta t_3^{(m)})}.
\end{equation}

Moreover, recall that under $(\mathbf{H_{exp}})$
\begin{equation}\label{def:duration_step3_exp}
\Delta t_3^{(m)}=t_*+\varsigma^m+\theta\ell_m^*~\text{ with }~\theta>0\text{ and }\varsigma>1.
\end{equation}

Plugging the definition of $\Delta t_3^{(m)}$ into \eqref{majo_exp}, we get
\begin{align*}
\left|(\star)_m\right|&\leqslant C^{(3)}_K (\ell^*_m+1)e^{-\lambda(n+\Delta t_3^{(m)})}  \\
&\leqslant C^{(3)}_K(\ell_m^*+1)e^{-(\lambda-\alpha)\Delta t_3^{(m)}}e^{-\alpha n}\\
 &\quad= C^{(3)}_K(\ell_m^*+1)e^{-(\lambda-\alpha)(t_*+\varsigma^m+\theta\ell_m^*)}e^{-\alpha n}.
 \end{align*}

We set 
\[C'_{\alpha,K}=\sup_{\ell^*>0}C^{(3)}_K(\ell^*+1)e^{-\theta(\lambda-\alpha)\ell^*}<+\infty.\]
And this gives us
\begin{equation}\label{final_majo_exp}
\left|(\star)_m\right|\leqslant 
C'_{\alpha,K} e^{-(\lambda-\alpha)(t_*+\varsigma^m)}e^{-\alpha n}\quad\text{ under }(\mathbf{H_{exp}}).
\end{equation}

\vspace{1cm}

\underline{\textbf{Second case:}} $\ell^*_m=0$, in other words, failure occurs during Step 1. This includes the case when the system is not $(K,\alpha)$-admissible at time $\tau_{m-1}$.\\

We have
\[(\star)_m=a_{n+\tau^{}_j-\tau^{}_{m-1}}~g_{\tau^{}_{m-1}}.\]
By Lemma \ref{lem:success_proba_step1} (ii), $|g_{\tau^{}_{m-1}}|\leqslant M_K$.\\
Moreover, since $n+\tau^{}_j-\tau^{}_{m-1}\geqslant n+\tau^{}_m-\tau^{}_{m-1}=n+\Delta t_3^{(m)}$, we obtain by using the same method as in the first case,
\begin{align}\label{case2}
\left|(\star)_m\right|&\leqslant \left\{\begin{array}{ll}
M_K(t_*\varsigma^m)^{-(\rho-\alpha)}(n+1)^{-\alpha}&\text{ under }(\mathbf{H_{poly}})\\
M_Ke^{-(\lambda-\alpha)(t_*+\varsigma^m)}e^{-\alpha n}&\text{ under }(\mathbf{H_{exp}})\\
\end{array}\right..
\end{align} 

\vspace{1cm}

By putting \eqref{final_majo_poly}, \eqref{final_majo_exp} and \eqref{case2} together, we finally get 
\begin{align*}
\left|(\star)_m\right|&\leqslant \left\{\begin{array}{ll}
\max(M_K,C_{\alpha,K})(t_*\varsigma^m)^{-(\rho-\alpha)}(n+1)^{-\alpha}&\text{ under }(\mathbf{H_{poly}})\\
\max(M_K,C'_{\alpha,K})e^{-(\lambda-\alpha)(t_*+\varsigma^m)}e^{-\alpha n}&\text{ under }(\mathbf{H_{exp}})\\
\end{array}\right..
\end{align*} 

Set $S_1=\sum_{m=1}^{+\infty}\varsigma^{-(\rho-\alpha)m}$ and $S_2=\sum_{m=1}^{+\infty}e^{-(\lambda-\alpha)\varsigma^m}$. By choosing $t_*$ large enough, we obtain for all $1\leqslant m\leqslant j$:
\begin{equation}\label{eq:majo_tentative_m}
\left|(\star)_m\right| \leqslant \left\{\begin{array}{ll}
\frac{1}{S_1}\varsigma^{-(\rho-\alpha)m}(n+1)^{-\alpha}&\text{ under }(\mathbf{H_{poly}})\\
\frac{1}{S_2}e^{-(\lambda-\alpha)\varsigma^m}e^{-\alpha n}&\text{ under }(\mathbf{H_{exp}})\\
\end{array}\right..
\end{equation}

Finally, by adding \eqref{eq:majo_tentative_m} for $m=1,..,j$ we have
\[\forall n\geqslant0,\quad|u_n|\leqslant\left\{\begin{array}{ll}
(n+1)^{-\alpha}&\text{ under }(\mathbf{H_{poly}})\\
e^{-\alpha n}&\text{ under }(\mathbf{H_{exp}})\\
\end{array}\right.\]
which concludes the proof.

\end{proof}

\subsection{Compact return condition \eqref{eq:adm_2}}\label{subsection:compact_return}
In the sequel, we set 
\begin{equation}\label{event:failure_trial_j}
\mathcal{E}_j:=\{\tau_j<\infty\}(=\{\tau_1<\infty,\dots,\tau_j<\infty\}).
\end{equation}
The aim of this subsection is to prove the following proposition: 

\begin{propo}\label{propo:compact_return} Assume $(\mathbf{H_1})$ and $(\mathbf{H_2})$. 
For all $\varepsilon>0$, there exists $K_\varepsilon>0$ such that
\begin{equation}\label{eq:compact_return}
\PP(\Omega_{K_\varepsilon,\tau_j}^2|\mathcal{E}_j)\geqslant 1-\varepsilon.
\end{equation}
\end{propo}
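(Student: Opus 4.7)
The plan is to establish uniform-in-$j$ bounds on
\[\E[V(X^i_{\tau_j}) \,|\, \mathcal{E}_j] \quad \text{and} \quad \E\Bigl[\bigl|\textstyle\sum_{k=1}^{\infty} a_k \xi^i_{\tau_j+1-k}\bigr|^2 \,\big|\, \mathcal{E}_j\Bigr]\]
(for $i=1,2$), and then conclude by Markov's inequality: given $\varepsilon>0$, the choice of $K_\varepsilon$ large enough makes both events $\{|X^i_{\tau_j}|\leq K_\varepsilon\}$ and $\{|\sum_k a_k\xi^i_{\tau_j+1-k}|\leq K_\varepsilon\}$ have conditional probability at least $1-\varepsilon/(4)$.

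The first ingredient is an elementary but crucial observation: since $\mathcal{E}_j \subseteq \mathcal{E}_{j-1}$ and $\PP(\mathcal{E}_j \,|\, \mathcal{E}_{j-1}) \geq \delta_1$ by the second part of Proposition \ref{prop:lower-bound_success_proba}, any nonnegative random variable $Y$ satisfies
\[\E[Y \,|\, \mathcal{E}_j] \;\leq\; \frac{\PP(\mathcal{E}_{j-1})}{\PP(\mathcal{E}_j)} \E[Y \,|\, \mathcal{E}_{j-1}] \;\leq\; \delta_1^{-1}\, \E[Y \,|\, \mathcal{E}_{j-1}].\]
The second ingredient is the Lyapunov contraction: iterating $(\mathbf{H_1})$ over $(\tau_{j-1},\tau_j]$ gives
\[V(X^i_{\tau_j}) \;\leq\; \gamma^{\Delta\tau_j}\,V(X^i_{\tau_{j-1}}) \;+\; C \sum_{k=\tau_{j-1}+1}^{\tau_j} \gamma^{\tau_j-k}\bigl(1+|\Delta^i_k|\bigr),\]
and under the $t_\ast$-calibration of Proposition \ref{propo:adm1} one has $\Delta\tau_j \geq \Delta t_3^{(j)} \geq t_\ast$ in both cases $(\mathbf{H_{poly}})$, $(\mathbf{H_{exp}})$, so $\gamma^{\Delta\tau_j} \leq \gamma^{t_\ast}$.

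Combining these two ingredients, together with a Gaussian moment estimate on the noise sum (which uses only the uniform bound $\E|\Delta^i_k|^2=\sum_l a_l^2 < \infty$ and the fact that the drift $g$ between $\xi^1$ and $\xi^2$ is bounded on each step by Lemma \ref{lem:success_proba_step1}(ii) and Corollary \ref{cor:coupling_step2}), I arrive at a recursion of the form
\[\E[V(X^i_{\tau_j}) \,|\, \mathcal{E}_j] \;\leq\; \frac{\gamma^{t_\ast}}{\delta_1}\, \E[V(X^i_{\tau_{j-1}}) \,|\, \mathcal{E}_{j-1}] \;+\; \frac{C'}{\delta_1},\]
where $C'$ is independent of $j$. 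Provided $t_\ast$ is chosen large enough that $\gamma^{t_\ast}/\delta_1<1$ (which is compatible with the lower bound required by Proposition \ref{propo:adm1}, since making $t_\ast$ larger only reinforces admissibility), this recursion yields a uniform-in-$j$ bound. The past-noise quantity is treated analogously, by splitting
\[\sum_{k=1}^{\infty} a_k \xi^i_{\tau_j+1-k} \;=\; \sum_{k=1}^{\Delta\tau_j} a_k \xi^i_{\tau_j+1-k} \;+\; \sum_{k>\Delta\tau_j} a_k \xi^i_{\tau_j-k+1},\]
where the second sum is of the same type as the corresponding object at time $\tau_{j-1}$ with an additional decaying weight (using $(\mathbf{H_{poly}})$ or $(\mathbf{H_{exp}})$), while the first sum has second moment bounded by $\sum_{k\geq 1}a_k^2<\infty$ using the controlled size of $|\xi^i_m|$ on $\mathcal{E}_j$.

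The main obstacle is the balance between the Lyapunov contraction $\gamma^{t_\ast}$ and the conditioning amplification $\delta_1^{-1}$: one must simultaneously satisfy $\gamma^{t_\ast} < \delta_1$ and the admissibility lower bound on $t_\ast$ from Proposition \ref{propo:adm1}. A secondary subtlety is that the joint law of $(\xi^1,\xi^2)$ built by the coupling procedure depends on the event $\mathcal{E}_j$, so conditional moments of the Gaussian noise must be controlled via the explicit construction (the bounds on $|g_k|$) rather than by the unconditional Gaussian law.
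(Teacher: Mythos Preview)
Your skeleton is exactly the paper's: the Lyapunov iteration over $(\tau_{j-1},\tau_j]$, the conditioning inequality $\E[Y\mid\mathcal{E}_j]\le\delta_1^{-1}\E[Y\mid\mathcal{E}_{j-1}]$, and the calibration $\gamma^{t_*}<\delta_1$ are precisely Proposition~\ref{propo:majo_expectation} in the text. So the reduction to a recursion of the form
\[
\E[V(X^i_{\tau_j})\mid\mathcal{E}_j]\le \tfrac12\,\E[V(X^i_{\tau_{j-1}})\mid\mathcal{E}_{j-1}]+(\text{noise term})
\]
is correct and matches the paper.

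The gap is in the noise term, and it is the bulk of the work. Your sentence ``a Gaussian moment estimate on the noise sum (which uses only $\E|\Delta^i_k|^2<\infty$ and the bounds on $g$)'' and the later claim that ``the first sum has second moment bounded \dots\ using the controlled size of $|\xi^i_m|$ on $\mathcal{E}_j$'' do not go through. The coupling controls only the \emph{difference} $g_m=\xi^1_{m+1}-\xi^2_{m+1}$, never $|\xi^i_m|$ itself; each $\xi^i_m$ remains marginally $\mathcal N(0,I_d)$ and is unbounded. More importantly, the conditioning on $\mathcal{E}_j$ depends on the entire noise history (through the coupling events $\mathcal{A}_{m,\ell}$), so the unconditional bound $\E|\Delta^i_k|^2<\infty$ gives no direct control: the naive route $\E[|\Delta^i_k|\mid\mathcal{E}_j]\le \E|\Delta^i_k|/\PP(\mathcal{E}_j)$ blows up since $\PP(\mathcal{E}_j)$ decays geometrically in $j$.

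In the paper this is isolated as the separate hypothesis $(\mathbf{H'_1})$, proved in Appendix~\ref{appendix:H'_1}. The argument is not a one-line moment bound: one decomposes $\Delta^i_u=\sum_{m=0}^{j}\Lambda^i_m(u)$ into the contributions of each past trial, applies a summation by parts (Lemma~\ref{lem:IPP_majo}) that crucially uses the discrete-derivative bound $|a_k-a_{k+1}|\le C_\kappa(k+1)^{-\kappa}$ in $(\mathbf{H_{poly}})$, and then controls $\E[\sup_u|\Lambda^i_m(u)|^p\mid\mathcal{E}_m]$ by further conditioning on the events $\mathcal{A}_{m,\ell}$ where $\Delta\tau_m$ is deterministic and the relevant block of $\xi^i$ becomes independent of $\mathcal{E}_{m-1}$. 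The outcome (Lemma~\ref{lem2_preuve_H'1}) is a bound $\E[\sup_u|\Lambda^i_m(u)|\mid\mathcal{E}_j]\le C\eta^{\,j-m}$ with $\eta<1$, which is what makes the sum over $m$ uniform in $j$. The past-noise quantity $\sum_{k\ge1}a_k\xi^i_{\tau_j+1-k}$ is handled by the same machinery (Remark~\ref{rem:second_part_compact_return}), not by the recursive splitting you propose. Your outline would be complete if you replaced the hand-waved Gaussian estimate by a proof of $(\mathbf{H'_1})$ along these lines.
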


At this stage, we assume that $(\mathbf{H_{poly}})$ is true. Indeed,
the exponential case will immediately follow from the polynomial one since $(\mathbf{H_{exp}})$ implies $(\mathbf{H_{poly}})$.\\

Since for every events $A_1,A_2,A_3$ and $A_4$, we have $\PP(A_1\cap A_2\cap A_3\cap A_4)\geqslant\sum_{i=1}^4\PP(A_i)-3$
, it is enough to prove that for all $\varepsilon>0$, there exists $K_\varepsilon>0$ such that

\begin{equation}\label{eq:compact_return1}
\PP(|X^i_{\tau_j}|\leqslant K_\varepsilon|\mathcal{E}_j)\geqslant 1-\varepsilon\quad\text{ and }\quad \PP\left(\left.\left|\sum_{k=1}^{+\infty}a_k\xi^i_{\tau_j+1-k}\right|\leqslant K_\varepsilon\right|\mathcal{E}_j\right)\geqslant1-\varepsilon\quad\text{ for }i=1,2
\end{equation}
to get \eqref{eq:compact_return}. 
Let us first focus on the first part of \eqref{eq:compact_return1} concerning $|X^i_{\tau_j}|$ for $i=1,2$. Recall that the function $V:\R^d\to\R^*_+$ appearing in $(\mathbf{H_1})$ is such that $\lim\limits_{|x|\to+\infty}V(x)=+\infty$. For $K>0$ large enough we then have: $|x|\geqslant K$ $\Rightarrow$ $V(x)\geqslant K$. Therefore, for $i=1,2$ and $K_\varepsilon$ large enough, using Markov inequality we get 
\begin{align}\label{eq:majo_tempo}
&\PP(|X^i_{\tau_j}|\geqslant K_\varepsilon|\mathcal{E}_j)\leqslant\PP(V(X^i_{\tau_j})\geqslant K_\varepsilon|\mathcal{E}_j)\leqslant\frac{\E(V(X^i_{\tau_j})~|\mathcal{E}_j)}{K_\varepsilon}.
\end{align}

Hence, the first part of \eqref{eq:compact_return1} is true if there exists a constant $C$ such that for every $j\in\N$ and for every $K>0$,
\begin{equation}\label{eq:compact_return2}
\E(V(X^i_{\tau_j})~|\mathcal{E}_j)\leqslant C\quad\text{for }i=1,2.
\end{equation}

Indeed, plugging \eqref{eq:compact_return2} into \eqref{eq:majo_tempo} and taking $K_\varepsilon\geqslant\frac{C}{\varepsilon}$ yield the desired inequality. We see here that the independence of $C$ with respect to $K$ is essential. \\
For the sake of simplicity, we we will first use the following hypothesis to prove \eqref{eq:compact_return2}:

\begin{center}
$(\mathbf{H'_1})$: Let $\gamma\in(0,1)$. There exists $C_\gamma>0$ such that for all $j\in\N$, for every $K>0$ and for $i=1,2$,
\[\E\left[\left.\sum_{l=1}^{\Delta\tau_j}\gamma^{\Delta\tau_j-l}|\Delta^i_{\tau_{j-1}+l}|\quad\right|\mathcal{E}_j\right]<C_\gamma\]
where $\Delta\tau_j:=\tau_j-\tau_{j-1}$ and $\Delta^i$ is the stationary Gaussian sequence defined by \eqref{eq:moving_average}.
\end{center}

\begin{propo}\label{propo:majo_expectation}
Assume $(\mathbf{H_1})$, $(\mathbf{H_2})$ and $(\mathbf{H'_1})$. Let $(X^1_n,X^2_n)_{n\in\N}$ be a solution of \eqref{syst_couplage} with initial condition $(X^1_0,X^2_0)$ satisfying $\E(V(X^i_0))<\infty$ for $i=1,2$. Moreover, assume that $\tau_0=0$ and that $(\tau_j)_{j\geqslant1}$ is built in such a way that for all $j\geqslant1$,
$\PP(\mathcal{E}_j|\mathcal{E}_{j-1})\geqslant\delta_1>0$ (where $\delta_1$ is not depending on $j$) and 
$\Delta\tau_j\geqslant\frac{\log(\delta_1/2)}{\log(\gamma)}$. Then, there esists a constant $C$ such that for all $j\in\N$ and for every $K>0$,
\begin{equation*}
\E(V(X^i_{\tau_j})~|\mathcal{E}_j)\leqslant C\quad\text{for }i=1,2.
\end{equation*}
\end{propo}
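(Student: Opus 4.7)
The strategy is to set $u_j^i := \E[V(X^i_{\tau_j})\mid\mathcal{E}_j]$ and show that $(u_j^i)_{j\geq 0}$ satisfies an affine contraction $u_j^i \leq \frac{1}{2} u_{j-1}^i + \tilde{C}$ with $\tilde{C}$ independent of $j$ and $K$, from which boundedness follows immediately by induction. The key ingredients are: (a) a pathwise iteration of the Lyapunov inequality $(\mathbf{H_1})$ on the random interval $\llbracket\tau_{j-1}+1,\tau_j\rrbracket$; (b) the moment bound $(\mathbf{H'_1})$ to handle the accumulated noise contribution; (c) the calibration $\Delta\tau_j\geq \log(\delta_1/2)/\log(\gamma)$ to force a deterministic contraction factor; and (d) the lower bound $\PP(\mathcal{E}_j\mid\mathcal{E}_{j-1})\geq\delta_1$ to switch the conditioning from $\mathcal{E}_j$ to $\mathcal{E}_{j-1}$.

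\textbf{Step 1: Lyapunov iteration.} Fix $i\in\{1,2\}$. Iterating \eqref{ineq:Lyapunov_type_condition} along the trajectory between $\tau_{j-1}$ and $\tau_j$, on the event $\mathcal{E}_j=\{\tau_j<\infty\}$ one obtains the pathwise bound
\begin{equation*}
V(X^i_{\tau_j})\;\leq\;\gamma^{\Delta\tau_j}\, V(X^i_{\tau_{j-1}})\;+\;C\sum_{l=1}^{\Delta\tau_j}\gamma^{\Delta\tau_j-l}\bigl(1+|\Delta^i_{\tau_{j-1}+l}|\bigr).
\end{equation*}
Taking $\E[\,\cdot\mid\mathcal{E}_j]$ and splitting the right-hand side yields two terms to control.

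\textbf{Step 2: The noise term.} The constant part gives $C\sum_{l=1}^{\Delta\tau_j}\gamma^{\Delta\tau_j-l}\leq C/(1-\gamma)$ pathwise, while the Gaussian part is controlled by assumption $(\mathbf{H'_1})$:
\begin{equation*}
\E\!\left[\sum_{l=1}^{\Delta\tau_j}\gamma^{\Delta\tau_j-l}|\Delta^i_{\tau_{j-1}+l}|\,\Big|\,\mathcal{E}_j\right]\leq C_\gamma.
\end{equation*}
Combining these gives a uniform constant $\tilde{C}:=C\bigl(C_\gamma+1/(1-\gamma)\bigr)$, independent of $j$ and $K$.

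\textbf{Step 3: The contraction term.} The assumption $\Delta\tau_j\geq\log(\delta_1/2)/\log(\gamma)$ yields the \emph{deterministic} bound $\gamma^{\Delta\tau_j}\leq \delta_1/2$ on $\mathcal{E}_j$ (using $\gamma\in(0,1)$ and $\delta_1\in(0,1)$). Hence
\begin{equation*}
\E\!\left[\gamma^{\Delta\tau_j}V(X^i_{\tau_{j-1}})\,\Big|\,\mathcal{E}_j\right]\;\leq\;\frac{\delta_1}{2}\,\E\!\left[V(X^i_{\tau_{j-1}})\mid\mathcal{E}_j\right].
\end{equation*}
The remaining obstacle---the one genuinely delicate step---is that $V(X^i_{\tau_{j-1}})$ is naturally measured against $\mathcal{E}_{j-1}$, not $\mathcal{E}_j$. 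Since $\mathcal{E}_j\subset\mathcal{E}_{j-1}$ and $V\geq 0$, I can write
\begin{equation*}
\E\!\left[V(X^i_{\tau_{j-1}})\mid\mathcal{E}_j\right]=\frac{\E[V(X^i_{\tau_{j-1}})\mathds{1}_{\mathcal{E}_j}]}{\PP(\mathcal{E}_j)}\leq\frac{\E[V(X^i_{\tau_{j-1}})\mathds{1}_{\mathcal{E}_{j-1}}]}{\PP(\mathcal{E}_j)}=\frac{\PP(\mathcal{E}_{j-1})}{\PP(\mathcal{E}_j)}\,u_{j-1}^i\leq\frac{1}{\delta_1}\,u_{j-1}^i,
\end{equation*}
using $\PP(\mathcal{E}_j)\geq\delta_1\PP(\mathcal{E}_{j-1})$ in the last step.

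\textbf{Step 4: Conclusion.} Putting Steps 2 and 3 together gives the recursion
\begin{equation*}
u_j^i \;\leq\; \frac{\delta_1}{2}\cdot\frac{1}{\delta_1}\,u_{j-1}^i\;+\;\tilde{C}\;=\;\frac{1}{2}\,u_{j-1}^i+\tilde{C},\qquad j\geq 1.
\end{equation*}
By a trivial induction, $u_j^i\leq \max\bigl(u_0^i,2\tilde{C}\bigr)\leq \E[V(X^i_0)]+2\tilde{C}=:C$, which is the desired uniform bound. The only real subtlety lies in Step 3 (the change of conditioning), and all constants produced are independent of $K$, as required.
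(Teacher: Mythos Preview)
Your proof is correct and follows essentially the same approach as the paper: iterate the Lyapunov inequality pathwise on $\llbracket\tau_{j-1}+1,\tau_j\rrbracket$, use the calibration of $\Delta\tau_j$ to get the deterministic factor $\delta_1/2$, switch conditioning from $\mathcal{E}_j$ to $\mathcal{E}_{j-1}$ via the ratio $\PP(\mathcal{E}_{j-1})/\PP(\mathcal{E}_j)\leq\delta_1^{-1}$, and invoke $(\mathbf{H'_1})$ for the noise term, arriving at the same recursion $u_j^i\leq\frac{1}{2}u_{j-1}^i+\tilde C$. The only cosmetic difference is that the paper writes the final bound as $(1/2)^j\E[V(X_0^i)]+\tilde C_\gamma$ rather than $\max(u_0^i,2\tilde C)$.
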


\vspace{1cm}

\begin{rem}
$\rhd$ Actually, hypothesis $(\mathbf{H'_1})$ is true under $(\mathbf{H_{poly}})$ and will be proven Appendix \ref{appendix:H'_1}.\\
$\rhd$ The existence of $\delta_1>0$ independent from $j$ is proven in Subsection \ref{subsection:lower-bound_coupling_proba}.\\
 $\rhd$ To get $\Delta\tau_j\geqslant\frac{\log(\delta_1/2)}{\log(\gamma)}$, it is sufficient to choose $t_*$ large enough in the expression of $\Delta t_3^{(j)}$ (see Proposition \ref{propo:adm1}).\\
$\rhd$ Since in Theorem \ref{thm:principal} we made the assumption $\int_\X V(x)\Pi^*_\X\mu_0(dx)<+\infty$ and since an invariant distribution (extracted thanks to Theorem \ref{thm:existence_invariant_dist}) also satisfies $\int_\X V(x)\Pi^*_\X\mu_\star(dx)<+\infty$, we get that $\E(V(X^i_0))<\infty$ for $i=1,2$. Hence, we can set $\tau_0=0$.
\end{rem}

\begin{proof}
By $(\mathbf{H_1})$, there exist $\gamma\in(0,1)$ and $C>0$ such that for all $n\geqslant0$ and for $i=1,2$ we have 
\[V(X^i_{n+1})\leqslant\gamma V(X^i_n)+C(1+|\Delta^i_{n+1}|).\]
By applying this inequality at time $n=\tau_j-1$, and by induction, we immediately get:
\begin{equation}\label{eq:proof_majo_expectation}
V(X^i_{\tau_j})\leqslant\gamma^{\Delta\tau_j}V(X^i_{\tau_{j-1}})+C\sum_{l=1}^{\Delta\tau_j}\gamma^{\Delta\tau_j-l}(1+|\Delta^i_{\tau_{j-1}+l}|).
\end{equation}
By assumption $\Delta\tau_j\geqslant\frac{\log(\delta_1/2)}{\log(\gamma)}$ then $\gamma^{\Delta\tau_j}\leqslant\frac{\delta_1}{2}$. Moreover, since $\mathcal{E}_j\subset\mathcal{E}_{j-1}$
and $\PP(\mathcal{E}_j|\mathcal{E}_{j-1})\geqslant\delta_1$, we get 
\begin{align*}
\E[V(X^i_{\tau_{j-1}})~|\mathcal{E}_j]=\frac{1}{\PP(\mathcal{E}_j)}\E[V(X^i_{\tau_{j-1}})\mathds{1}_{\mathcal{E}_j}]
&\leqslant\underbrace{\frac{\PP(\mathcal{E}_{j-1})}{\PP(\mathcal{E}_j)}}_{=\PP(\mathcal{E}_j|\mathcal{E}_{j-1})^{-1}}\E[V(X^i_{\tau_{j-1}})~|\mathcal{E}_{j-1}]\\
&\leqslant \delta_1^{-1}\E[V(X^i_{\tau_{j-1}})~|\mathcal{E}_{j-1}].
\end{align*}
Therefore
\[\E[\gamma^{\Delta\tau_j}V(X^i_{\tau_{j-1}})~|\mathcal{E}_j]\leqslant\frac{\delta_1}{2}\delta_1^{-1}\E[V(X^i_{\tau_{j-1}})~|\mathcal{E}_{j-1}]=\frac{1}{2}\E[V(X^i_{\tau_{j-1}})~|\mathcal{E}_{j-1}].\]
Hence, by taking \eqref{eq:proof_majo_expectation}, we have 
\begin{align*}
\E[V(X^i_{\tau_j})~|\mathcal{E}_j]&\leqslant\frac{1}{2}\E[V(X^i_{\tau_{j-1}})~|\mathcal{E}_{j-1}]+C\sum_{l=0}^{+\infty}\gamma^{l}+C~\E\left[\left.\sum_{l=1}^{\Delta\tau_j}\gamma^{\Delta\tau_j-l}|\Delta^i_{\tau_{j-1}+l}|\quad\right|\mathcal{E}_j\right]\\
&=\frac{1}{2}\E[V(X^i_{\tau_{j-1}})~|\mathcal{E}_{j-1}]+\frac{C}{1-\gamma}+C~\E\left[\left.\sum_{l=1}^{\Delta\tau_j}\gamma^{\Delta\tau_j-l}|\Delta^i_{\tau_{j-1}+l}|\quad\right|\mathcal{E}_j\right].
\end{align*}
Hypothesis $(\mathbf{H'_1})$ allows us to say
\[\E[V(X^i_{\tau_j})~|\mathcal{E}_j]\leqslant\frac{1}{2}\E[V(X^i_{\tau_{j-1}})~|\mathcal{E}_{j-1}]+C\left(\frac{1}{1-\gamma}+C_\gamma\right).\]

By induction, we get the existence of a constant $\tilde{C}_\gamma>0$ such that
\[\E[V(X^i_{\tau_j})~|\mathcal{E}_j]\leqslant\left(\frac{1}{2}\right)^{j}\E[V(X^i_{\tau_{0}})~|\mathcal{E}_{0}]+\tilde{C}_\gamma.\] 
Since $\PP(\mathcal{E}_0)=1$, it comes
\[\E[V(X^i_{\tau_j})~|\mathcal{E}_j]\leqslant\left(\frac{1}{2}\right)^{j}\E[V(X^i_{\tau_{0}})]+\tilde{C}_\gamma.\] 
Since $\tau_0=0$ and we assumed that $\E[V(X^i_{0})]<\infty$, the proof is over.
\end{proof}

As a result of the proof of $(\mathbf{H'_1})$ (see Appendix \ref{appendix:H'_1}), we get the first part of \eqref{eq:compact_return1}. The second part can be also deduced from the proof of $(\mathbf{H'_1})$ thanks to Remark \ref{rem:second_part_compact_return}.

\section{Proof of Theorem \ref{thm:principal}}\label{section:proof_thm_principal}

Now we have all the necessary elements to prove the second part of the main theorem \ref{thm:principal} concerning the convergence in total variation to the unique invariant distribution (where the uniqueness will immediately follow from this convergence).\\

We recall that $\Delta\tau^{}_{j}$ denotes the duration of coupling trial $j$ and we set
\begin{equation}\label{eq:coupling_trial_successful}
j^{(s)}:=\inf\{j>0,\Delta\tau^{}_{j}=+\infty\}. 
\end{equation}
 $j^{(s)}$ corresponds to the trial where the coupling procedure is successful.
 The aim of this section is to bound $\PP(\tau^{}_{\infty}>n)~$ where
 $~\tau^{}_{\infty}=\inf\{n\geqslant0~|~X^1_k=X^2_k,~\forall k\geqslant n\},~$ since $~\|\mathcal{L}((X^1_k)_{k\geqslant n})-\mathcal{S}\mu_\star\|_{TV}\leqslant\PP(\tau^{}_{\infty}>n)$.
But, we have
 \[\PP(\tau^{}_\infty>n)=\PP\left(\sum_{k=1}^{+\infty}\Delta\tau^{}_k\mathds{1}_{\{j^{(s)}>k\}}>n\right)\]
 where $j^{(s)}$ is defined in \eqref{eq:coupling_trial_successful}.
It remains to bound the right term. Let $p\in(0,+\infty)$. \\
If $p\in(0,1)$, then by the Markov inequality and the simple inequality $|a+b|^p\leqslant|a|^p+|b|^p$, we get 
 \begin{align}\label{bound_speed_p<1}
 \PP\left(\sum_{k=1}^{+\infty}\Delta\tau^{}_k\mathds{1}_{\{j^{(s)}>k\}}>n\right)
 &\leqslant\frac{1}{n^p}\sum_{k=1}^{+\infty}\E[|\Delta\tau^{}_k|^p\mathds{1}_{\{j^{(s)}>k\}}]\nonumber\\
 &\leqslant\frac{1}{n^p}\sum_{k=1}^{+\infty}\E[\E[|\Delta\tau^{}_k|^p\mathds{1}_{\{\Delta\tau^{}_k<+\infty\}}~|~\{\tau^{}_{k-1}<+\infty\}]
 \mathds{1}_{\{\tau^{}_{k-1}<+\infty\}}].
 \end{align}
Else, if $p\geqslant1$, by Markov inequality and Minkowski inequality, we have
 \begin{align}\label{bound_speed_p>1}
 \PP\left(\sum_{k=1}^{+\infty}\Delta\tau^{}_k\mathds{1}_{\{j^{(s)}>k\}}>n\right)
 &\leqslant\frac{1}{n^p}\E\left[\left(\sum_{k=1}^{+\infty}\Delta\tau^{}_k\mathds{1}_{\{j^{(s)}>k\}}\right)^{p}\right]\nonumber\\
 &\leqslant\frac{1}{n^p}\left(\sum_{k=1}^{+\infty}\E[|\Delta\tau^{}_k|^p\mathds{1}_{\{j^{(s)}>k\}}]^{1/p}\right)^p\nonumber\\
 &\leqslant\frac{1}{n^p}\left(\sum_{k=1}^{+\infty}\E[\E[|\Delta\tau^{}_k|^p\mathds{1}_{\{\Delta\tau^{}_k<+\infty\}}~|~\{\tau^{}_{k-1}<+\infty\}]
 \mathds{1}_{\{\tau^{}_{k-1}<+\infty\}}]^{1/p}\right)^p.
 \end{align}

We define the event $\mathcal{A}_{k,\ell}:=\mathcal{B}^c_{k,\ell}\cap\mathcal{B}_{k,\ell-1}$ which corresponds to the failure of Step 2 after $\ell$ attempts at trial $k$.
Both in \eqref{bound_speed_p<1} and \eqref{bound_speed_p>1}, we separate the term $\E[|\Delta\tau^{}_k|^p\mathds{1}_{\{\Delta\tau^{}_k<+\infty\}}~|~\{\tau^{}_{k-1}<+\infty\}]$ through the events $\mathcal{A}_{k,\ell}$ which gives 
 \begin{equation}\label{majo_sur_A_k_l}
 \E[|\Delta\tau^{}_k|^p\mathds{1}_{\{\Delta\tau^{}_k<+\infty\}}~|~\{\tau^{}_{k-1}<+\infty\}]
 =\sum_{\ell=1}^{+\infty}\E[\mathds{1}_{\mathcal{A}_{k,\ell}}|\Delta\tau^{}_k|^p\mathds{1}_{\{\Delta\tau^{}_k<+\infty\}}~|~\{\tau^{}_{k-1}<+\infty\}].
 \end{equation}
Moreover, thanks to Lemma \ref{lem:success_proba_step2} and the definition of the events $\mathcal{A}_{k,\ell}$, we deduce that for $\ell\geqslant2$,
\begin{equation}\label{eq:majo_proba_A_k_l}
\PP(\mathcal{A}_{k,\ell}~|~\{\tau^{}_{k-1}<+\infty\})\leqslant
 2^{-\tilde{\alpha}\ell}
 \end{equation}
 where $\tilde{\alpha}:=\left\{\begin{array}{ccc}
\min\{\alpha,\beta,\alpha+\beta-1\}-1/2-\varepsilon & \text{for all }\varepsilon>0 &\text{ under } (\mathbf{H_{poly}})\\
\min(\alpha,\zeta) & &\text{ under } (\mathbf{H_{exp}}).
\end{array}\right.$\\

We have now to distinguish the polynomial case from the exponential one.\\

$\rhd$ Under $(\mathbf{H_{poly}})$: \\

We have a bound of type $\Delta\tau^{}_k\leqslant C_1\varsigma^k2^{\theta\ell}$ (due to the value of $\Delta t_3^{(k)}$, see Proposition \ref{propo:adm1})
 on the event $\mathcal{A}_{k,\ell}$ where $\varsigma>1$ is arbitrary. Indeed, on $\mathcal{A}_{k,\ell}$, we have
 \begin{align*}
 \Delta\tau^{}_k=\tau^{}_{k}-\tau^{}_{k-1} &\leqslant c_22^{\ell+1}+\Delta t_3^{(k)}\\
 &=c_22^{\ell+1}+t_*\varsigma^{k}2^{\theta\ell}\\
 &\leqslant C_1\varsigma^k2^{(\theta\vee1)\ell}~\text{ (for }C_1
 \text{ large enough)}.
 \end{align*}
 Hence, in \eqref{majo_sur_A_k_l} we get
 \begin{align*}\E[|\Delta\tau^{}_k|^p\mathds{1}_{\{\Delta\tau^{}_k<+\infty\}}~|~\{\tau^{}_{k-1}<+\infty\}]
 &\leqslant C_1^p\varsigma^{kp}\left(\sum_{\ell=1}^{+\infty}2^{(\theta\vee1) p\ell}~
 \PP(\mathcal{A}_{k,\ell}~|~\{\tau^{}_{k-1}<+\infty\})\right)\nonumber\\
 &\leqslant C_1^p\varsigma^{kp}\left(2^{(\theta\vee1) p}
 +\sum_{\ell=2}^{+\infty}2^{((\theta\vee1) p-\tilde{\alpha})\ell}\right)\quad\text{ using \eqref{eq:majo_proba_A_k_l}}\nonumber\\
 &\leqslant C\varsigma^{kp}\quad\text{ if $p\in\left(0,\frac{\tilde{\alpha}}{\theta\vee1}\right)$.}
 \end{align*}
Then for $p\in\left(0,\frac{\tilde{\alpha}}{\theta\vee1}\right)$,
 \begin{equation}\label{majo_k}
 \E[\E[|\Delta\tau^{}_k|^p\mathds{1}_{\{\Delta\tau^{}_k<+\infty\}}~|~\{\tau^{}_{k-1}<+\infty\}]
 \mathds{1}_{\{\tau^{}_{k-1}<+\infty\}}]\leqslant C\varsigma^{kp}\PP(j^{(s)}>k-1)
 \end{equation}
 and it remains to control $\PP(j^{(s)}>k-1)$. We have \[\PP(j^{(s)}>k-1)=\prod_{j=1}^{k-1}\PP(\mathcal{E}_j|\mathcal{E}_{j-1})=\prod_{j=1}^{k-1}(1-\PP(\mathcal{E}^c_j|\mathcal{E}_{j-1}))\]
where $\mathcal{E}_j$ is defined in \eqref{event:failure_trial_j}. By Proposition \ref{prop:lower-bound_success_proba} and \ref{propo:compact_return} applied for $\varepsilon=1/2$, we get for every $j\geqslant2$,
 \[\PP(\mathcal{E}^c_j|\mathcal{E}_{j-1})\geqslant\PP(\Delta\tau_j=+\infty|\Omega_{K_{1/2},\alpha,\tau_{j-1}})\PP(\Omega_{K_{1/2},\alpha,\tau_{j-1}}|\mathcal{E}_{j-1})\geqslant\frac{\delta_0}{2}\]
where $\delta_0>0$ depends on $K_{1/2}$. 
Therefore,
 $~\PP(j^{(s)}>k-1)\leqslant \left(1-\frac{\delta_0}{2}\right)^{k-1}~$
and by \eqref{majo_k}
 \begin{equation}
 \E[\E[|\Delta\tau^{}_k|^p\mathds{1}_{\{\Delta\tau^{}_k<+\infty\}}~|~\{\tau^{}_{k-1}<+\infty\}]
 \mathds{1}_{\{\tau^{}_{k-1}<+\infty\}}]\leqslant C\varsigma^{kp}\left(1-\frac{\delta_0}{2}\right)^{k-1}.
 \end{equation}
 Finally, by choosing $1<\varsigma<\left(1-\frac{\delta_0}{2}\right)^{-1/p}$, we get using \eqref{bound_speed_p<1} or \eqref{bound_speed_p>1} that for all $p\in\left(0,\frac{\tilde{\alpha}}{\theta\vee1}\right)$, there exists $C_p>0$ such that 
\begin{equation} \label{last_majo}
 \PP(\tau_\infty>n)\leqslant\PP\left(\sum_{k=1}^{+\infty}\Delta\tau^{}_k\mathds{1}_{\{j^{(s)}>k\}}>n\right)\leqslant C_pn^{-p}.
 \end{equation}
 It remains to optimize the upper-bound $\frac{\tilde{\alpha}}{\theta\vee1}$ for $p$. Since $\tilde{\alpha}:=\min\{\alpha,~\beta,~\alpha+\beta-1\}-1/2-\varepsilon$ with $\varepsilon>0$ as small as necessary and since by Proposition \ref{propo:adm1}: $\theta>(2(\rho-\alpha))^{-1} \text{ and } \alpha\in\left(\frac{1}{2}\vee\left(\frac{3}{2}-\beta\right),\rho\right),$ we finally get \eqref{last_majo} for all $p\in\left(0,v(\beta,\rho)\right)$ where 
 \[v(\beta,\rho)=\sup\limits_{\alpha\in\left(\frac{1}{2}\vee\left(\frac{3}{2}-\beta\right),\rho\right)}\min\{1,2(\rho-\alpha)\}(\min\{\alpha,~\beta,~\alpha+\beta-1\}-1/2).\]
This concludes the proof of Theorem \ref{thm:principal} in the polynomial case.\\

$\rhd$ Under $(\mathbf{H_{exp}})$: \\

The proof is almost the same. The only differences are that we use the following bound 
 \begin{align*}
 \Delta\tau^{}_k=\tau^{}_{k}-\tau^{}_{k-1} &\leqslant c_2(\ell+1)+\Delta t_3^{(k)}\\
 &=c_2(\ell+1)+t_*+\varsigma^{k}+\theta\ell\\
 &\leqslant C_1\varsigma^k\theta\ell~\text{ (for }C_1
 \text{ large enough)}
 \end{align*}
 on the events $\mathcal{A}_{k,\ell}$ and the upperbound $\PP(\mathcal{A}_{k,\ell}~|~\{\tau^{}_{k-1}<+\infty\})\leqslant
 2^{-\tilde{\alpha}\ell}$ given in \eqref{eq:majo_proba_A_k_l}.
 And then we get for all $p>0$ the existence of $C_p>0$ such that
 \begin{equation} 
 \PP(\tau_\infty>n)\leqslant\PP\left(\sum_{k=1}^{+\infty}\Delta\tau^{}_k\mathds{1}_{\{j^{(s)}>k\}}>n\right)\leqslant C_pn^{-p}.
 \end{equation}
 by choosing $1<\varsigma<\left(1-\frac{\delta_0}{2}\right)^{-1/p}$
and the proof of Theorem \ref{thm:principal} is over.\\

%To end this paper, let us make a comment about the particular case where the dynamical system \eqref{SDS} is reduced to: $X_{n+1}=AX_{n}+\sigma\Delta_{n+1}$ where $A$ and $\sigma$ are some given matrices. As for (fractional) Ornstein-Uhlenbeck processes in a continuous setting, the study of linear dynamics can be achieved with specific methods. Here, the sequence $X$ benefits of a Gaussian structure so that the convergence in distribution could be studied through the covariance of the process. One can also remark that since for two paths $X$ and $\tilde{X}$ built with the same noise, we have: $\tilde{X}_{n+1}-X_{n+1}=A(\tilde{X}_{n}-X_{n})$, a simple induction leads to $\E[|\tilde{X}_{n}-X_{n}|^2]\leqslant\rho(A^*A)^n\E[|\tilde{X}_{0}-X_{0}|^2]$ where $\rho(A^*A)$ is the spectral radius of the Hermitian matrix $A^*A$. So without going into the details, if $\rho(A^*A)<1$, such bounds lead to geometric rates of convergence in Wasserstein distance and also in total variation distance (on this topic, see e.g. \cite{panloup2018sub}). To conclude, it is worth noting that our coupling strategy (and the related rates in Theorem \ref{thm:principal}) is more adapted to a setting where the drift term in $F$ does not contract everywhere as in linear dynamics.

\section*{Acknowledgements}

I am grateful to my PhD advisors Fabien Panloup and Laure Coutin for suggesting the problem, for helping me in the research process and for their valuable comments. I also gratefully acknowledge the reviewers for their helpful suggestions for improving the paper.

\appendix

\section{Proof of Theorem \ref{thm:euler_scheme}}\label{appendix:proof_euler_scheme}

The beginning of the following proof makes use of ideas developped in \cite{cohen2011approximation, cohen2014approximation}. \\Let us recall that $F_h(x,w):=x+hb(x)+\sigma(x)w$.

\begin{proof}
Set $V(x)=|x|$.
Let us begin by proving that $(\mathbf{H_1})$ holds with $V$ for $F_h$ with $h>0$ small enough.
We have:
\[|F_h(x,w)|^2=|x|^2+h^2|b(x)|^2+2h\langle x,b(x)\rangle+2\langle x,\sigma(x)w\rangle+2h\langle b(x),\sigma(x)w\rangle+|\sigma(x)w|^2.\]
Then, using the inequality $|\langle a,b\rangle|\leqslant\frac{1}{2}(\varepsilon|a|^2+\frac{1}{\varepsilon}|b|^2)$ for all $\varepsilon>0$, we get
\[
|\langle x,\sigma(x)w\rangle|\leqslant\frac{1}{2}(\varepsilon|x|^2+\frac{1}{\varepsilon}|\sigma(x)w|^2)
~\text{ and }~|\langle b(x),\sigma(x)w\rangle|\leqslant\frac{1}{2}(\varepsilon|b(x)|^2+\frac{1}{\varepsilon}|\sigma(x)w|^2).
\]
Moreover, assumptions \textbf{(L1)} and \textbf{(L2)} on $b$ give the existence of $\tilde{\beta}\in\R,~\alpha>0$ and $\tilde{C}>0$ such that
\begin{equation*}
|\langle b(x),x\rangle|\leqslant\tilde{\beta}-\tilde{\alpha}|x|^2~\text{ et }~
|b(x)|^2\leqslant\tilde{C}(1+|x|^2).
\end{equation*}
Hence, we finally have
\begin{align*}
|F_h(x,w)|^2&\leqslant |x|^2+\tilde{C}h^2(1+|x|^2)+2h(\tilde{\beta}-\tilde{\alpha}|x|^2)+\varepsilon|x|^2+\frac{1}{\varepsilon}|\sigma(x)w|^2\\
&~~~~~+\tilde{C}h\varepsilon(1+|x|^2)+\frac{h}{\varepsilon}|\sigma(x)w|^2+|\sigma(x)w|^2\\
&\leqslant|x|^2+2h(\tilde{\beta}-\tilde{\alpha}|x|^2)+\tilde{C}(\varepsilon+h\varepsilon+h^2)(1+|x|^2)+
\left(1+\frac{h+1}{\varepsilon}\right)|\sigma(x)w|^2.\\
\end{align*}
Now, set $\varepsilon=h^2$ and choose $0<h<\min\left\{\sqrt{1+\frac{\tilde{\alpha}}{\tilde{C}}}-1,~\frac{1}{\tilde{\alpha}}\right\}$. Then, we have $\tilde{C}(\varepsilon+h\varepsilon+h^2)\leqslant\tilde{\alpha}h$ and $0<1-\tilde{\alpha}h<1$.
Therefore,  
\[|F_h(x,w)|^2\leqslant|x|^2+h(\tilde{\gamma}-\tilde{\alpha}|x|^2)+\left(1+\frac{h+1}{\varepsilon}\right)|\sigma(x)w|^2\]
where $\tilde{\gamma}=2\tilde{\beta}+\tilde{\alpha}$.
Then
\begin{equation}\label{eq:1proof_H1}
|F_h(x,w)|^2\leqslant(1-\tilde{\alpha} h)|x|^2+h\tilde{\gamma}+\left(1+\frac{h+1}{\varepsilon}\right)|\sigma(x)w|^2.
\end{equation}
By assumption $\sigma$ is a bounded function on $\R^d$. Then, there exists $C'>0$ depending on $h$ and $\sigma$ such that
\[|F_h(x,w)|^2\leqslant(1-\tilde{\alpha} h)|x|^2+C'\left(1+|w|^2\right).\]
Using the classical inequality $\sqrt{a+b}\leqslant \sqrt{a}+\sqrt{b}$, we finally get the existence of 
$\gamma\in(0,1)$ and $C>0$ such that for all $(x,w)\in\R^d\times\R^d$
\begin{equation}
|F_h(x,w)|\leqslant\gamma|x|+C\left(1+|w|\right)
\end{equation}
which achieves the proof of $(\mathbf{H_1})$.\\
We now turn to the proof of $(\mathbf{H_2})$.
Let $K>0$ and take $\mathbf{x}=(x,x',y,y')\in B(0,K)^4$. Here we take $\tilde{K}=K$.
Hence, let us now define $\Lambda_{\mathbf{x}}$.
For all $u\in B(0,K)$, we set
\begin{equation}\label{eq:Lambda_center_space}
\Lambda_{\mathbf{x}}(u)=A(\mathbf{x})u+B(\mathbf{x})
\end{equation}
with $A(\mathbf{x}):=\sigma^{-1}(x')\sigma(x)$ and $B(\mathbf{x}):=\sigma^{-1}(x')(x-x'+h(b(x)-b(x')))+\sigma^{-1}(x')\sigma(x)y-y'$.\\
Then, \eqref{eq:Lambda_center_space} is equivalent to $\tilde{F}_h(x,u,y)=\tilde{F}_h(x',\Lambda_{\mathbf{x}}(u),y')$ for all $u\in B(0,K)$.
Hence, for all $u\in B(0,K)$,
\begin{equation}
J_{\Lambda_{\mathbf{x}}}(u)=\sigma^{-1}(x')\sigma(x).
\end{equation}
Since $\sigma,\sigma^{-1}$ and $b$ are continuous, there exist $C_K>0$ and $m_K>0$ independent from $\mathbf{x}$ such that for all $u\in B(0,K)$,
\begin{align*}
|\det(J_{\Lambda_{\mathbf{x}}}(u))|\geqslant C_{K}\\
|\Lambda_{\mathbf{x}}(u)-u|\leqslant m_K.
\end{align*}
For the sake of simplicity, let us set $\tilde{\Lambda}_{\mathbf{x}}(u)=\Lambda_{\mathbf{x}}(u)-B(\mathbf{x})$ and extend $\tilde{\Lambda}_{\mathbf{x}}$ to $\R^d$.
Now, let $K_1>0$ be independent of $\mathbf{x}$ such that $\sup\limits_{u\in B(0,K)}|A(\mathbf{x})u|<K_1$ and set for all $u\in \R^d\setminus B(0,K_1)$, $\tilde{\Lambda}_{\mathbf{x}}(u)=u$. Hence, $\tilde{\Lambda}_{\mathbf{x}}$ is a $\mathcal{C}^1$-diffeomorphism from $B(0,K)$ to $\mathcal{E}_K:=\{A(\mathbf{x})u~|u\in B(0,K)\}$ and from $\R^d\setminus B(0,K_1)$ to itself. It remains to extend it with a $\mathcal{C}^1$-diffeomorphism from $B(0,K_1)\setminus B(0,K)$ to $B(0,K_1)\setminus\mathcal{E}_K$. To this end, we consider $q$ the positive definite quadratic form associated to the ellipsoid $\mathcal{E}_K$ and we denote by $\mathcal{B}':=(e'_1,\dots,e'_d)$ the orthonormal basis which diagonalizes $q$, so that if $x=(x'_1,\dots,x'_d)$ in $\mathcal{B}'$ we have $q(x)=\sum_{i=1}^d \lambda_i (x'_i)^2$ with $\lambda_i>0$ for all $i\in\{1,\dots,d\}$. Let $\mathcal{B}:=(e_1,\dots,e_d)$ be the canonical basis and $\varphi:\R^d\to\R^d$ be the linear application such that $\varphi(e_i)=\frac{1}{\sqrt{\lambda_i}}e'_i$ for all $i\in\{1,\dots,d\}$.
\begin{rem} This application $\varphi$ gives also a $\mathcal{C}^1$-diffeomorphism from $B(0,K)$ to $\mathcal{E}_K$ by construction and $|u|=K\Longleftrightarrow \sqrt{|q(\varphi(u))|}=K$.
\end{rem}
Now, set for all $u\in B(0,K_1)\setminus B(0,K)$,
\[\tilde{\Lambda}_{\mathbf{x}}(u):=\left[\left(1-\frac{|u|-K}{K_1-K}\right)\frac{K}{|u|}+\left(\frac{|u|-K}{K_1-K}\frac{K_1}{|\varphi(u)|}\right)\right]\varphi(u).\]
This is just an interpolation between $\frac{K}{|u|}\varphi(u)$ and $\frac{K_1}{|\varphi(u)|}\varphi(u)$.
It is a $\mathcal{C}^1$-diffeomorphism from $B(0,K_1)\setminus B(0,K)$ to $B(0,K_1)\setminus\mathcal{E}_K$ and the inverse is given by, for all $v\in B(0,K_1)\setminus \mathcal{E}_K$:
\[\tilde{\Lambda}^{-1}_{\mathbf{x}}(v):=\left[\left(1-\frac{|v|-\alpha(|v|)}{K_1-\alpha(|v|)}\right)\frac{K}{\sqrt{q(v)}}+\left(\frac{|v|-\alpha(|v|)}{K_1-\alpha(|v|)}\frac{K_1}{|\varphi^{-1}(v)|}\right)\right]\varphi^{-1}(v)\]
with $\alpha(|v|):=|v|/\sqrt{q(v)}$.
Finally, one can check that we have all the elements to conclude that $\tilde{F}_h$ satisfies $(\mathbf{H_2})$.
\end{proof}

\begin{rem}\label{rem:relax_sigma_assumption}
$\rhd$ If we relax the boundedness assumption on $\sigma$ and assume that $|\sigma(x)|\leqslant C(1+|x|^{\kappa})$ for some $\kappa\in(0,1)$, only the proof of $(\mathbf{H_1})$ is changed. The beginning of the proof is exactly the same. From \eqref{eq:1proof_H1}, we use the classical Young inequality $|ab|\leqslant\frac{1}{p}\frac{|a|^p}{\varepsilon^p}+\frac{1}{q}|b|^q\varepsilon^q$ with $p=\frac{1}{\kappa}$ and $q=\frac{1}{1-\kappa}$ for the term $|\sigma(x)w|^2$. Then, it sufficies to use $|\sigma(x)|\leqslant C(1+|x|^{\kappa})$ and to calibrate $\varepsilon$ to get an inequality of the type: $|F(x,w)|^2\leqslant\gamma|x|^2+C(1+|w|^{\frac{2}{1-\kappa}})$. We conclude that $(\mathbf{H_1})$ holds with $V(x):=|x|^{1-\kappa}$.\\
$\rhd$ Let us consider the family of functions given by $F(x,w)=f(b(x)+\sigma(x)w)$. Provided that $f$ is well defined on $\R^d$, $\sigma$ is continuously invertible and $\sigma^{-1}$ and $b$ are continuous on $\R^d$, we can build a function $\Lambda_{\mathbf{x}}$ which satisfies $(\mathbf{H_2})$ exactly in the same way as in the preceding proof.
\end{rem}

\section{Explicit formula for the sequence $(b_k)_{k\geqslant0}$ defined in Proposition \ref{prop:inverse_T}}\label{appendix:formula_b}

\begin{thm}\label{thm:inversion}
Let $(u_n)_{n\in\N}$ and $(v_n)_{n\in\N}$ be two sequences such that for $n\in\N$,
\begin{equation}\label{f_fonction_g}
u_n=\sum_{k=0}^{n}a_kv_{n-k}
\end{equation}
then we have:
\begin{equation}\label{g_fonction_f}
v_n=\sum_{k=0}^{n}b_ku_{n-k}
\end{equation}
where 
\[b_0:=\frac1{a_0}~~\text{ and }~~\forall k\geqslant1,~b_k:=\sum_{p=1}^k\frac{(-1)^p}{a_0^{p+1}}\left(\sum_{\substack{k_1,\dots,k_p\geqslant1\\ k_1+\dots+k_p=k}}
\prod_{i=1}^pa_{k_i}\right).\]
\end{thm}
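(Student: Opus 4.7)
My plan is to prove the identity via formal power series, since everything reduces to verifying an inverse in the ring $\R[[z]]$. I would associate to the sequences the generating series $A(z):=\sum_{k\ge 0}a_k z^k$, $U(z):=\sum_{n\ge 0}u_n z^n$, $V(z):=\sum_{n\ge 0}v_n z^n$, and note that the hypothesis \eqref{f_fonction_g} is precisely the Cauchy product relation $U(z)=A(z)V(z)$ in $\R[[z]]$. Since $a_0\neq 0$, the series $A$ is invertible in $\R[[z]]$, so setting $B(z):=A(z)^{-1}=\sum_{k\ge 0}b_k z^k$ yields $V(z)=B(z)U(z)$, which is exactly the claim \eqref{g_fonction_f}. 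Everything then boils down to checking that the coefficients of $B$ coincide with the explicit formula stated in the theorem.

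To identify the coefficients of $B$, I would write $A(z)=a_0\bigl(1-h(z)\bigr)$ with $h(z):=-\sum_{k\ge 1}(a_k/a_0)z^k$, and use the formal geometric series
\[
B(z)=\frac{1}{a_0}\sum_{p\ge 0}h(z)^p=\frac{1}{a_0}\sum_{p\ge 0}(-1)^p\Bigl(\sum_{k\ge 1}\tfrac{a_k}{a_0}z^k\Bigr)^p.
\]
For a fixed $k\ge 1$ only indices $p\in\{1,\dots,k\}$ contribute to the coefficient of $z^k$, since each factor of the $p$-fold product carries a power $z^{k_i}$ with $k_i\ge 1$. Expanding the $p$-th power as a sum over $(k_1,\dots,k_p)$ with $k_1+\dots+k_p=k$ and $k_i\ge 1$ gives precisely
\[
[z^k]B(z)=\sum_{p=1}^{k}\frac{(-1)^p}{a_0^{p+1}}\sum_{\substack{k_1,\dots,k_p\ge 1\\k_1+\dots+k_p=k}}\prod_{i=1}^p a_{k_i},
\]
which matches the formula in the theorem, while the $p=0$ term alone yields $b_0=1/a_0$.

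If one prefers to avoid generating series, an equivalent route is to verify the convolution identity $\sum_{j=0}^{k}a_j b_{k-j}=\delta_{k,0}$ directly from the explicit formula (the $k=0$ case is immediate; for $k\ge 1$ one isolates the $p=1$ contribution from $\sum_{j=1}^{k}a_jb_{k-j}$ and re-indexes the remaining telescoping sum to cancel $a_0 b_k$), and then to prove \eqref{g_fonction_f} by induction on $n$: plugging \eqref{f_fonction_g} into $\sum_{k=0}^{n}b_k u_{n-k}$ and swapping the order of summation produces $\sum_{m=0}^{n}v_{n-m}\bigl(\sum_{j=0}^{m}a_j b_{m-j}\bigr)=v_n$.

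I do not anticipate a real obstacle here: the argument is algebraic and purely formal, with no analytic convergence issue since each coefficient of $B$ depends on only finitely many $a_k$. The only mildly delicate point is bookkeeping the multinomial expansion of $h(z)^p$ carefully enough to match the stated indexing by compositions of $k$ into $p$ positive parts; this is a standard combinatorial identity and is the one step I would write out with care.
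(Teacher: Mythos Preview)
Your proposal is correct and takes essentially the same approach as the paper: both invert $A(z)=\sum_{k\ge 0}a_kz^k$ via the formal geometric series $\frac{1}{a_0}\sum_{p\ge 0}\bigl(-\sum_{k\ge 1}\frac{a_k}{a_0}z^k\bigr)^p$ and then read off the coefficient of $z^k$ as a sum over compositions of $k$ into $p$ positive parts. The only cosmetic difference is that the paper first recasts \eqref{f_fonction_g} as a lower-triangular Toeplitz system before passing to generating series, whereas you work directly in $\R[[z]]$; your alternative convolution-verification route is a nice bonus not present in the paper.
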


\begin{proof}
It sufficies to reverse a triangular Toeplitz matrix. Indeed, equation \eqref{f_fonction_g} is equivalent to:
\begin{equation}\label{sys_matriciel}
\forall n\in\N,~~\left(
\begin{matrix}
f_0\\f_1\\ \vdots \\f_n
\end{matrix}\right)
=\left(\begin{matrix}
a_0&0&\hdots&0\\
a_1&\ddots&\ddots&\vdots\\
\vdots&\ddots&\ddots&0\\
a_{n-1}&\hdots&a_1&a_0
\end{matrix}\right)
\left(\begin{matrix}
g_0\\g_1\\ \vdots \\g_n
\end{matrix}\right).
\end{equation}
Denote by $A$ the matrix asociated to the system. Denote by $N$ the following nilpotent matrix:
\[N=\left(\begin{matrix}
0&\hdots&\hdots&0\\
1&\ddots&&\vdots\\
&\ddots&\ddots&\vdots\\
(0)&&1&0
\end{matrix}\right).\]
Then, $A=a_0I_n+a_1N+\dots+a_{n-1}N^{n-1}$ and
we are looking for $B$ such that
\\$B=b_0I_n+b_1N+\dots+b_{n-1}N^{n-1}$
and $AB=I_n$.
Denote by
\[S(z)=\sum_{k\geqslant0}a_kz^k~~\text{ and }~~S^{-1}(z)=\sum_{k\geqslant0}b_kz^k,\]
we are interested in the $(n-1)$ first coefficients of $S^{-1}(z)$.

And formally,
\begin{align*}
S^{-1}(z)=\frac{1}{S(z)}&=\frac{1}{a_0}\left(\frac{1}{1+\sum_{k\geqslant1}\frac{a_k}{a_0}z^k}\right)\\
&=\frac{1}{a_0}\sum_{p\geqslant0}\frac{(-1)^p}{a_0^p}\left(\sum_{k\geqslant1}a_kz^k\right)^p\\
&=\frac{1}{a_0}+\sum_{p\geqslant1}\frac{(-1)^p}{a_0^{p+1}}\sum_{k\geqslant p}
\left(\sum_{\substack{k_1,\dots,k_p\geqslant1\\ k_1+\dots+k_p=k}}a_{k_1}a_{k_2}\dots a_{k_p}\right)z^k\\
&=\frac{1}{a_0}+\sum_{k\geqslant1}\sum_{p=1}^k\left(\frac{(-1)^p}{a_0^{p+1}}
\sum_{\substack{k_1,\dots,k_p\geqslant1\\ k_1+\dots+k_p=k}}a_{k_1}a_{k_2}\dots a_{k_p}\right)z^k.\\
\end{align*}
Finally, we identify the desired coefficients.
\end{proof}

\section{Particular case: when the sequence $(a_k)_{k\geqslant0}$ is log-convex}\label{appendix:a_log_convex}

This section is based on a work made by N.Ford, D.V.Savostyanov and N.L.Zamarashkin in \cite{ford2014decay}.\\

\begin{lem}\label{lem:toeplitz} Let $(a_n)_{n\in\N}$ be a log-convex sequence in the following sense 
\[ a_n\geqslant0~~\text{ for } n\geqslant0\quad \text{ and }\quad a_n^2\leqslant a_{n-1}a_{n+1}~~\text{ for }n\geqslant1.\]
If $a_0>0$, then the sequence $(b_n)_{n\in\N}$ defined by
\begin{equation*}
b_0=\frac1{a_0} ~~\text{ and }~~ \forall n\geqslant1,~~ b_n=-\frac{1}{a_0}\sum_{k=1}^{n}a_kb_{n-k}
\end{equation*}
satisfies 
\begin{equation}
\forall n\geqslant1,~~~b_n\leqslant0 ~~\text{ and }~~|b_n|\leqslant b_0a_{n}
\end{equation}
\end{lem}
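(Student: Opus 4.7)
My plan is a strong induction on $n$, preceded by a homogeneity reduction to the normalization $a_0 = 1$. Rescaling $\tilde{a}_k := a_k/a_0$ preserves log-convexity, and the associated coefficients satisfy $\tilde b_k = a_0 b_k$, so the two target inequalities $b_n \leq 0$ and $|b_n| \leq b_0 a_n$ translate to the corresponding inequalities $\tilde b_n \leq 0$ and $|\tilde b_n| \leq \tilde a_n$ for the normalized sequences, where now $\tilde b_0 = 1$. I will work in this normalized setting throughout.

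Setting $c_n := -\tilde b_n$ for $n \geq 1$ and $c_0 := 0$, the defining recursion becomes
\[
c_n = \tilde a_n - \sum_{k=1}^{n-1} \tilde a_k\, c_{n-k}, \qquad n \geq 1,
\]
and the goals become $c_n \geq 0$ and $c_n \leq \tilde a_n$. The upper bound comes for free once one has the lower bound: assuming by induction that $c_{n-k} \geq 0$ for $1 \leq k \leq n-1$, every term of the sum in the recursion is non-negative (all $\tilde a_k \geq 0$), so $c_n \leq \tilde a_n$ drops out immediately. All the difficulty is therefore concentrated in showing positivity, $c_n \geq 0$.

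This positivity statement is essentially the discrete form of Kaluza's classical theorem: a non-negative log-convex sequence with first term $1$ is a renewal sequence, i.e.\ the Taylor coefficients of $1 - 1/A(z)$ are non-negative. Following the strategy of \cite{ford2014decay}, I would prove it by a strong induction that exploits log-convexity in its useful form, namely the monotonicity of the ratios $\tilde a_{k+1}/\tilde a_k$, equivalently the exchange inequality $\tilde a_p \tilde a_q \leq \tilde a_{p-1} \tilde a_{q+1}$ whenever $p \leq q+1$. Concretely, one unfolds $c_{n-k}$ one more step by reapplying the recursion inside the expression for $c_n$, producing a double sum that one then reorganizes by pairing indices; the exchange inequalities show that the paired contributions combine into sign-definite blocks, and the previously established positivity $c_{n-k-j} \geq 0$ is fed back in to close the induction.

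I expect the principal obstacle to be the combinatorial bookkeeping in this pairing step: matching every term of the unfolded double sum to a log-convexity inequality at the correct index level, keeping track of diagonal terms, and ensuring that boundary contributions (where the index shift required by the exchange inequality is unavailable) do not spoil the sign. Once that is handled, both $c_n \geq 0$ and $c_n \leq \tilde a_n$ are obtained in the same induction step, and undoing the rescaling gives the statement for the original $b_n$.
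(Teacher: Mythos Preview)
Your proposal is correct and follows essentially the same route as the paper: normalize to $a_0=1$, invoke the strong-induction argument of \cite{ford2014decay} (Kaluza-type, via the ratio/exchange form of log-convexity) to get $b_n\leqslant 0$, and then observe that the bound $|b_n|\leqslant b_0 a_n$ falls out of the recursion once negativity is in hand. Your identification of the combinatorial pairing as the only real work matches the paper's choice to simply cite Theorem~4 of \cite{ford2014decay} for that step.
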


\begin{rem}\label{rem:toeplitz}
The sequence $a_n=(n+1)^{-\rho}$ is log-convex for all $\rho>0$, then the corresponding $(b_n)_{n\in\N}$ is such that $\forall n\in\N,\quad |b_n|\leqslant(n+1)^{-\rho}$. 
\end{rem}

\begin{proof}
Without loss of generality, we assume that $a_0=1$.\\
$\bullet$ First, following Theorem 4 of \cite{ford2014decay} we can prove by strong induction that for all $n\geqslant1$, $b_{n}\leqslant0$.\\
$\bullet$ The second property satisfied by $(b_n)$ directly follows from the first one. 
 Let $n\geqslant1$, as we just saw $b_{n}\leqslant0$ therefore 
$|b_{n}|=-b_{n}$. But,
\begin{align*}
 -b_{n}&=\sum_{k=1}^{n-1}a_k(\underbrace{b_{n-k}}_{\leqslant0})+a_{n}b_0\\
 &\leqslant b_0a_{n}
\end{align*}
and the lemma is proven.
\end{proof}
\section{Proof of Proposition \ref{propo:b_rate}}\label{appendix:proof_propo_b_rate}
We recall that $\rho=3/2-H$ where $H\in(0,1/2)$ is the Hurst parameter and 
 $(b_n)_{n\in\N}$ is defined by
\begin{equation}\label{inverse_a_k}
b_0=\frac{1}{a^H_0}\quad \text{ and for }n\geqslant1,\quad b_n=-\frac{1}{a^H_0}\sum_{k=1}^na^H_kb_{n-k}.
\end{equation}

and for all $k\geqslant 1$,
\[\frac{a^H_k}{a^H_0}=\left(\left(2k+1\right)^{1-\rho}-\left(2k-1\right)^{1-\rho}\right).\]

We want to show that $|b_n|\leqslant C_b (n+1)^{-(2-\rho)}$ by induction. To this end we only need to prove that for $n$ large enough,
\begin{equation}
S_n:=\sum_{k=1}^n\left(\left(2k-1\right)^{1-\rho}-\left(2k+1\right)^{1-\rho}\right)(n+1-k)^{-(2-\rho)}\leqslant(n+1)^{-(2-\rho)}.
\end{equation}

For the sake of simplicity we assume that $n$ is even.
\begin{align}
S_n&=\sum_{k=1}^{n/2}\left(\left(2k-1\right)^{1-\rho}-\left(2k+1\right)^{1-\rho}\right)(n+1-k)^{-(2-\rho)}+\sum_{k=n/2+1}^n\left(\left(2k-1\right)^{1-\rho}-\left(2k+1\right)^{1-\rho}\right)(n+1-k)^{-(2-\rho)}\nonumber\\
S_n&=:S^{(1)}_n+S^{(2)}_n.
\end{align}

$\rhd$ We begin with $S^{(1)}_n$. Summation by parts:
\begin{align}\label{S1}
S^{(1)}_n&=\left(\frac{n}{2}+1\right)^{-(2-\rho)}(1-(n+1)^{1-\rho})-\sum_{k=1}^{n/2-1}(1-(2k+1)^{1-\rho})\left((n-k)^{-(2-\rho)}-(n+1-k)^{-(2-\rho)}\right)\nonumber\\
&=\left(\frac{n}{2}+1\right)^{-(2-\rho)}(1-(n+1)^{1-\rho})-\left(\left(\frac{n}{2}+1\right)^{-(2-\rho)}-n^{-(2-\rho)}\right)\nonumber\\
&\quad\quad\quad\quad\quad\quad+\sum_{k=1}^{n/2-1}(2k+1)^{1-\rho}\left((n-k)^{-(2-\rho)}-(n+1-k)^{-(2-\rho)}\right)\nonumber\\
&=n^{-(2-\rho)}-\left(\frac{n}{2}+1\right)^{-(2-\rho)}(n+1)^{1-\rho}+\sum_{k=1}^{n/2-1}(2k+1)^{1-\rho}\left((n-k)^{-(2-\rho)}-(n+1-k)^{-(2-\rho)}\right)\nonumber\\
S^{(1)}_n&=n^{-(2-\rho)}-\left(\frac{n}{2}\right)^{-(2-\rho)}(n+1)^{1-\rho}+\sum_{k=1}^{n/2}(2k+1)^{1-\rho}\left((n-k)^{-(2-\rho)}-(n+1-k)^{-(2-\rho)}\right).
\end{align}
We set $\tilde{S}_n:=\sum_{k=1}^{n/2}(2k+1)^{1-\rho}\left((n-k)^{-(2-\rho)}-(n+1-k)^{-(2-\rho)}\right)$.
Then,
\begin{align*}
\tilde{S}_n &=\frac{1}{n}\sum_{k=1}^{n/2}\left(\frac{2k+1}{n}\right)^{1-\rho}\left(\left(1-\frac{k}{n}\right)^{-(2-\rho)}-\left(1-\frac{k-1}{n}\right)^{-(2-\rho)}\right)\\
&=\frac{2^{1-\rho}}{n}\sum_{k=1}^{n/2}\left(\frac{k+1/2}{n}\right)^{1-\rho}\left(\left(1-\frac{k}{n}\right)^{-(2-\rho)}-\left(1-\frac{k-1}{n}\right)^{-(2-\rho)}\right).
\end{align*}

Moreover,

\begin{align}\label{somme_riemann}
&\sum_{k=1}^{n/2}\left(\frac{k+1/2}{n}\right)^{1-\rho}\left(\left(1-\frac{k}{n}\right)^{-(2-\rho)}-\left(1-\frac{k-1}{n}\right)^{-(2-\rho)}\right)
\nonumber\\
&\quad=(2-\rho)\left(\int_{0}^{1/2}\left(x+\frac{1}{2n}\right)^{1-\rho}(1-x)^{-(3-\rho)}{\rm d}x-\sum_{k=1}^{n/2}\int_{\frac{k-1}{n}}^{\frac{k}{n}}\left(\left(x+\frac{1}{2n}\right)^{1-\rho}-\left(\frac{k+1/2}{n}\right)^{1-\rho}\right)(1-x)^{-(3-\rho)}{\rm d}x\right)
\end{align}

and
\begin{align}\label{valeur_integrale}
\int_{0}^{1/2}\left(x+\frac{1}{2n}\right)^{1-\rho}(1-x)^{-(3-\rho)}{\rm d}x&=
\left[\frac{(1-x)^{\rho-2}\left(x+\frac{1}{2n}\right)^{2-\rho}}{(2-\rho)\left(1+\frac{1}{2n}\right)}\right]_0^{1/2}\nonumber\\
&=\frac{1}{2-\rho}\left(1+\frac{1}{2n}\right)^{-1}\left[\left(1+\frac{1}{n}\right)^{2-\rho}-\left(\frac{1}{2n}\right)^{2-\rho}\right].
\end{align}

Hence by putting together \eqref{somme_riemann} and \eqref{valeur_integrale} we get

\begin{align}\label{majo_S_tilde}
\tilde{S}_n&=\frac{2^{1-\rho}}{n}\left(\left(1+\frac{1}{2n}\right)^{-1}\left[\left(1+\frac{1}{n}\right)^{2-\rho}-\left(\frac{1}{2n}\right)^{2-\rho}\right]\right.\nonumber\\
&\quad\quad\quad\left.-(2-\rho)\sum_{k=1}^{n/2}\int_{\frac{k-1}{n}}^{\frac{k}{n}}\left(\left(x+\frac{1}{2n}\right)^{1-\rho}-\left(\frac{k+1/2}{n}\right)^{1-\rho}\right)(1-x)^{-(3-\rho)}{\rm d}x\right).
\end{align}

We deduce from \eqref{S1} and \eqref{majo_S_tilde} that 
\begin{equation}\label{majo_final_S1}
S^{(1)}_n\leqslant n^{-(2-\rho)}-\left(\frac{n}{2}\right)^{-(2-\rho)}(n+1)^{1-\rho}+\frac{2^{1-\rho}}{n}\left(1+\frac{1}{2n}\right)^{-1}\left[\left(1+\frac{1}{n}\right)^{2-\rho}-\left(\frac{1}{2n}\right)^{2-\rho}\right].
\end{equation}

$\rhd$ Now, we look after $S_n^{(2)}$:\\

As before, using the fact that

\begin{equation}\label{valeur_integrale2}
\int_{0}^{1/2}\left(1-x+\frac{1}{2n}\right)^{-\rho}x^{-(2-\rho)}{\rm d}x=
\left[\frac{\left(1-x+\frac{1}{2n}\right)^{1-\rho}x^{\rho-1}}{(\rho-1)\left(1+\frac{1}{2n}\right)}\right]_0^{1/2}=\frac{1}{\rho-1}\left(1+\frac{1}{2n}\right)^{-1}\left(1+\frac{1}{n}\right)^{1-\rho}
\end{equation}

we get

\begin{equation}\label{majo_S2}
S^{(2)}_n=\frac{2^{1-\rho}}{n}\left(\left(1+\frac{1}{2n}\right)^{-1}\left(1+\frac{1}{n}\right)^{1-\rho}-(\rho-1)\sum_{k=1}^{n/2}\int_{\frac{k-1}{n}}^{\frac{k}{n}}\left(x^{-(2-\rho)}-\left(\frac{k}{n}\right)^{-(2-\rho)}\right)\left(1-x+\frac{1}{2n}\right)^{-\rho}{\rm d}x\right).
\end{equation}

Now, for all $k\in\{1,\dots,n/2\}$, we set \[I_k:=\int_{\frac{k-1}{n}}^{\frac{k}{n}}\left(x^{-(2-\rho)}-\left(\frac{k}{n}\right)^{-(2-\rho)}\right)\left(1-x+\frac{1}{2n}\right)^{-\rho}{\rm d}x.\]

Thanks to the substitution $t=x-\frac{k-1}{n}$, we have

\begin{equation*}
I_k=\int_0^{1/n}\left(\left(t+\frac{k-1}{n}\right)^{-(2-\rho)}-\left(\frac{k}{n}\right)^{-(2-\rho)}\right)\left(1+\frac{1}{2n}-t-\frac{k-1}{n}\right)^{-\rho}{\rm d}t
\end{equation*}

Taylor-Lagrange expansion:

\begin{equation*}
\bullet\quad \left(t+\frac{k-1}{n}\right)^{-(2-\rho)}-\left(\frac{k}{n}\right)^{-(2-\rho)}=\left(\frac{1}{n}-t\right)(2-\rho)\left(\frac{k}{n}\right)^{-(3-\rho)}+\frac{1}{2}\left(\frac{1}{n}-t\right)^2(2-\rho)(3-\rho)\xi^{-(4-\rho)}
\end{equation*}
\quad \quad \quad \quad with $\xi\in]t+(k-1)/n,k/n[$.

\begin{equation*}
\bullet\quad \left(1+\frac{1}{2n}-t-\frac{k-1}{n}\right)^{-\rho}=\left(1+\frac{1}{2n}-\frac{k-1}{n}\right)^{-\rho}+t\rho\left(1+\frac{1}{2n}-c\right)^{-\rho-1}\quad\quad\quad\quad\quad\quad\quad\quad\quad\quad\quad\text{ }
\end{equation*}
\quad \quad \quad \quad with $c\in](k-1)/n,t+(k-1)/n[$.\\

Therefore, we deduce that
\begin{equation*}
I_k\geqslant\int_0^{1/n}\left(\frac{1}{n}-t\right)(2-\rho)\left(\frac{k}{n}\right)^{-(3-\rho)}\left(1+\frac{1}{2n}-\frac{k-1}{n}\right)^{-\rho}{\rm d}t
=\frac{2-\rho}{2n^2}\left(\frac{k}{n}\right)^{-(3-\rho)}\left(1+\frac{1}{2n}-\frac{k-1}{n}\right)^{-\rho}.
\end{equation*}

Then we add the inequality for $k$ from $1$ to $n/2$,

\begin{equation}\label{mino_somme_integrale}
\sum_{k=1}^{n/2}I_k\geqslant\frac{2-\rho}{2n}\times\underbrace{\frac{1}{n}\sum_{k=1}^{n/2}\left(\frac{k}{n}\right)^{-(3-\rho)}\left(1+\frac{1}{2n}-\frac{k-1}{n}\right)^{-\rho}}_{U_n}.
\end{equation}

We easily show that
\begin{equation}
U_n\geqslant\int_0^{1/2}\left(y+\frac{1}{n}\right)^{-(3-\rho)}\left(1+\frac{3}{2n}-y\right)^{-\rho}{\rm d}y=:J_n.
\end{equation}
By integration by parts on $J_n$ we get:
\begin{align*}
J_n&=\left[\frac{-\left(y+\frac{1}{n}\right)^{-(2-\rho)}}{2-\rho}\left(1+\frac{3}{2n}-y\right)^{-\rho}\right]_0^{1/2}+\frac{\rho}{2-\rho}\int_0^{1/2}\left(y+\frac{1}{n}\right)^{-(2-\rho)}\left(1+\frac{3}{2n}-y\right)^{-\rho-1}{\rm d}y\\
&=\frac{1}{2-\rho}\underbrace{\left[\left(\frac{1}{n}\right)^{-(2-\rho)}\left(1+\frac{3}{2n}\right)^{-\rho}-\left(\frac{1}{2}+\frac{1}{n}\right)^{-(2-\rho)}\left(\frac{1}{2}+\frac{3}{2n}\right)^{-\rho}\right]}_{\underset{n\to+\infty}{\sim}n^{2-\rho}}\\
&\quad\quad\quad+\frac{\rho}{2-\rho}\underbrace{\int_0^{1/2}\left(y+\frac{1}{n}\right)^{-(2-\rho)}\left(1+\frac{3}{2n}-y\right)^{-\rho-1}{\rm d}y}_{\underset{n\to+\infty}{\longrightarrow}\int_0^{1/2}y^{-(2-\rho)}\left(1-y\right)^{-\rho-1}{\rm d}y}.
\end{align*}
Hence, for $n$ large enough, we have 
\begin{equation}\label{mino_J_n}
U_n\geqslant J_n\geqslant\frac{1}{2(2-\rho)}n^{2-\rho}
\end{equation}
By combining \eqref{mino_somme_integrale} and \eqref{mino_J_n} we get for $n$ large enough
\begin{equation*}
\sum_{k=1}^{n/2}I_k\geqslant\frac{1}{4}n^{1-\rho}.
\end{equation*}
Finally we get for $S^{(2)}_n$ the following upper-bound for $n$ large enough,

\begin{equation}\label{majo_final_S2}
S^{(2)}_n\leqslant\frac{2^{1-\rho}}{n}\left(\left(1+\frac{1}{2n}\right)^{-1}\left(1+\frac{1}{n}\right)^{1-\rho}-\frac{\rho-1}{4}n^{1-\rho}\right).
\end{equation}

By putting together \eqref{majo_final_S1} and \eqref{majo_final_S2} and when factoring by
$(n+1)^{-(2-\rho)}$, we get for $S_n$

\begin{equation}\label{majo_final_S}
S_n\leqslant(n+1)^{-(2-\rho)}\left(1-\frac{1}{n}\right)^{-(2-\rho)}\times u_n
\end{equation}
with 
\begin{align*}
u_n&=1-2^{2-\rho}(n+1)^{1-\rho}\\
&\quad\quad\quad+2^{1-\rho}n^{1-\rho}\left[\left(1+\frac{1}{2n}\right)^{-1}\left(\left(1+\frac{1}{n}\right)^{2-\rho}-\left(\frac{1}{2n}\right)^{2-\rho}\right)+\left(1+\frac{1}{2n}\right)^{-1}\left(1+\frac{1}{n}\right)^{1-\rho}-\frac{\rho-1}{4}n^{1-\rho}\right]
\end{align*}

Lastly, we have the following asymptotic expansion:

\[\left(1-\frac{1}{n}\right)^{-(2-\rho)}\times u_n=1-\frac{2^{1-\rho}(\rho-1)}{4}n^{2-2\rho}+o\left(\frac{1}{n}\right)\]

Since $\rho\in(1,3/2)$ we have $2-2\rho\in(-1,0)$ therefore for $n$ large enough we conclude that
\[S_n\leqslant(n+1)^{-(2-\rho)}.\]

\section{Proof of Theorem \ref{thm:existence_invariant_dist}}\label{appendix:proof_thm_existence}

Let $x_0\in\X$ and $\mu=\delta_{x_0}\times\PP_w$. We have $\Pi_\W^*\mu=\PP_w$ therefore by Property \ref{prop:stability} we get $\forall k\in\N$, $\Pi_\W^*(\mathcal{Q}^k\mu)=\PP_w$.
Moreover, we clearly have $\int_{\X}\psi(x)(\Pi_\X^*\mu)({\rm d}x)=\psi(x_0)<+\infty$.\\
We now set for all $n\in\N^*$, \[R_n\mu=\frac1{n}\sum_{k=0}^{n-1}\mathcal{Q}^{k}\mu.\]
The aim is to prove that the sequence $(R_n\mu)_{n\in\N^*}$ is tight.
First, let us prove that $(\Pi_\X^*R_n\mu)_{n\in\N^*}$ is tight.\\
By Definition \ref{def:lyapunov_function} (ii), we have $\forall k\geqslant0$:
\[\int_{\X\times\W}\psi(x)\mathcal{Q}^{k+1}\mu({\rm d}x,{\rm d}w)-\alpha
\int_{\X}\psi(x)(\Pi_\X^*\mathcal{Q}^k\mu)({\rm d}x)\leqslant\beta.\]
By adding for $k$ from $0$ to $n-1$, dividing by $n$ and reordering the terms, we get:
\begin{align}\label{ineq1:thm_existence_inv_dist}
(1-\alpha)&
\int_{\X}\psi(x)(\Pi_\X^*R_n\mu)({\rm d}x)\nonumber\\+&\frac1{n}\sum_{k=0}^{n-1}\left(\int_{\X\times\W}\psi(x)\mathcal{Q}^{k}\mu({\rm d}x,{\rm d}w)-
\int_{\X}\psi(x)(\Pi_\X^*\mathcal{Q}^k\mu)({\rm d}x)\right)\nonumber\\
&+\frac1{n}\int_{\X\times\W}\psi(x)\mathcal{Q}^{n+1}\mu({\rm d}x,{\rm d}w)
-\frac1{n}\int_{\X}\psi(x)(\Pi_\X^*\mu)({\rm d}x)~~\leqslant\beta.
\end{align}
Since we are in a Polish space (here $\X\times\W$) we can ``disintegrate'' $\mathcal{Q}^k\mu$ for all $k\in\{0,\dots,n-1\}$ (see \cite{arnold2013random} for background):
\[\mathcal{Q}^k\mu({\rm d}x,{\rm d}w)=(\mathcal{Q}^k\mu)^x({\rm d}w)(\Pi_\X^*\mathcal{Q}^k\mu)({\rm d}x).\]
By integrating first with respect to $w$ and then with respect to $x$, we get: \[\int_{\X\times\W}\psi(x)\mathcal{Q}^{k}\mu({\rm d}x,{\rm d}w)-
\int_{\X}\psi(x)(\Pi_\X^*\mathcal{Q}^k\mu)({\rm d}x)=0.\]
Let us return to \eqref{ineq1:thm_existence_inv_dist},
\begin{align*}
(1-\alpha)\int_{\X}\psi(x)(\Pi_\X^*R_n\mu)({\rm d}x)\leqslant~&\beta+\frac1{n}\left(\int_{\X}\psi(x)(\Pi_\X^*\mu)({\rm d}x)
-\int_{\X\times\W}\psi(x)\mathcal{Q}^{n+1}\mu({\rm d}x,{\rm d}w)\right)\\
&=\beta+\frac1{n}\int_{\X}\psi(x)(\Pi_\X^*\mu-\Pi_\X^*(\mathcal{Q}^{n+1}\mu))({\rm d}x).
\end{align*}
Set $A_{n+1}=\int_{\X}\psi(x)(\Pi_\X^*\mathcal{Q}^{n+1}\mu)({\rm d}x)$.
By Definition \ref{def:lyapunov_function} (ii) and by induction, we have 
\[0\leqslant\frac{A_{n+1}}{n}\leqslant\frac{\beta}{n}\sum_{k=0}^n\alpha^k+\frac{\alpha^{n+1}}{n}A_0
=\frac{\beta}{n}\frac{1-\alpha^{n+1}}{1-\alpha}+\frac{\alpha^{n+1}}{n}\psi(x_0).\]
Hence we deduce that $\lim\limits_{n\to+\infty}\frac{A_{n+1}}{n}=0$.
Then, there exists $C>0$ such that $\forall n\in\N^*$:
\[(1-\alpha)\int_{\X}\psi(x)(\Pi_\X^*R_n\mu)({\rm d}x)\leqslant C\]
and then 
\[\sup_{n\geqslant1}\int_{\X}\psi(x)(\Pi_\X^*R_n\mu)({\rm d}x)\leqslant\frac{C}{1-\alpha}.\]
Let $\delta>0$ and $K_\delta:=\{x\in\X~|~\psi(x)\leqslant\delta\}=\psi^{-1}([0,\delta])$. By Definition \ref{def:lyapunov_function} (ii), $K_\delta$ is a compact set.
For all $x\in\X$, we have $\mathds{1}_{K_\delta^c}(x)\leqslant\frac{\psi(x)}{\delta}$, so
\[\forall n\in\N^*,~~(\Pi_\X^*R_n\mu)(K_\delta^c)\leqslant\frac{C}{\delta(1-\alpha)}.\]
By setting $\frac{\varepsilon}{2}=\frac{C}{\delta(1-\alpha)}$, we deduce that $(\Pi_\X^*R_n\mu)_{n\in\N^*}$ is tight.\\
Let us now go back to the tightness of $(R_n\mu)_{n\in\N^*}$.
Let $K$ be a compact set of $\W$ such that $\PP_w(K^c)<\frac{\varepsilon}{2}$, this is possible since $\W$ is Polish. We then get
\begin{align*}
R_n\mu((K_\delta\times K)^c)\leqslant & R_n\mu(K_\delta^c\times\W)+R_n\mu(\X\times K^c)\\
&=(\Pi_\X^*R_n\mu)(K_\delta^c)+(\Pi_\W^*R_n\mu)(K^c)\\
&=(\Pi_\X^*R_n\mu)(K_\delta^c)+(\PP_w)(K^c)\\
&\leqslant \frac{\varepsilon}{2}+\frac{\varepsilon}{2}=\varepsilon.
\end{align*}
Finally, $(R_n\mu)_{n\in\N^*}$ is tight. Let $\mu_{\star}$ be one of its accumulation points.
By the Krylov-Bogoliubov criterium we deduce that $\mu_{\star}$ is an invariant distribution for $\mathcal{Q}$.

\section{Proof of Lemma \ref{lem:technical}}\label{appendix:proof_technical_lem}

In this section we will prove a slightly more precise result than Lemma \ref{lem:technical} which is the following: for all $\alpha,\beta>0$ such that $\alpha+\beta>1$, there exists $C(\alpha,\beta)>0$ such that for all $n\geqslant0$,
\begin{equation}
\sum_{k=0}^{n}(k+1)^{-\beta}(n+1-k)^{-\alpha}\leqslant C(\alpha,\beta)~\left\{\begin{array}{lll}
(n+1)^{-\beta}\ln(n) & \text{ if }& \alpha=1 \text{ and } \beta\leqslant1\\
(n+1)^{-\alpha}\ln(n) & \text{ if } & \beta=1 \text{ and } \alpha\leqslant1\\
(n+1)^{-\min\{\alpha,\beta,\alpha+\beta-1\}} & \text{ else }
\end{array}\right..
\end{equation}
For the sake of simplicity, we will prove this result when $n$ is odd. If $n$ is even, the proof is almost the same. Set $N:=\frac{n+1}{2}$. Then, we get 
\begin{align}\label{eq:lem_tec_decomp}
\sum_{k=0}^{n}(k+1)^{-\beta}(n+1-k)^{-\alpha}
&=\sum_{k=1}^{n+1}k^{-\beta}(n+2-k)^{-\alpha}\nonumber\\
&=\sum_{k=1}^{N}k^{-\beta}(n+2-k)^{-\alpha}+\sum_{k=N+1}^{n+1}k^{-\beta}(n+2-k)^{-\alpha}\nonumber\\
&=\sum_{k=1}^{N}k^{-\beta}(n+2-k)^{-\alpha}+\sum_{k=1}^{N}k^{-\alpha}(n+2-k)^{-\beta}\nonumber\\
\sum_{k=0}^{n}(k+1)^{-\beta}(n+1-k)^{-\alpha}&= S_N(\beta,\alpha)+S_N(\alpha,\beta)
\end{align}
by setting $S_N(\alpha,\beta):=\sum_{k=1}^{N}k^{-\alpha}(n+2-k)^{-\beta}=\sum_{k=1}^{N}k^{-\alpha}(2N-(k-1))^{-\beta}$.\\

$\rhd$ If $\alpha\in(0,1)$, we have $\alpha+\beta-1\leqslant\beta$ and $S_N(\alpha,\beta)\leqslant \tilde{C}(\alpha,\beta)(n+1)^{-(\alpha+\beta-1)}$. \\Indeed,
\begin{align*}
S_N(\alpha,\beta)
&=\sum_{k=1}^{N}k^{-\alpha}(2N-(k-1))^{-\beta}\\
&=N^{-(\alpha+\beta-1)}\times \frac{1}{N}\sum_{k=1}^{N}\left(\frac{k}{N}\right)^{-\alpha}\left(2- \frac{k-1}{N}\right)^{-\beta}\\
&\leqslant N^{-(\alpha+\beta-1)}\times  \frac{1}{N}\sum_{k=1}^{N}\left(\frac{k}{N}\right)^{-\alpha}\left(2- \frac{k}{N}\right)^{-\beta}\\
\end{align*}
and 
\[\lim\limits_{N\to+\infty}\frac{1}{N}\sum_{k=1}^{N}\left(\frac{k}{N}\right)^{-\alpha}\left(2- \frac{k}{N}\right)^{-\beta}=\int_{0}^1x^{-\alpha}(2-x)^{-\beta}dx\]
where the integral is well defined since $\alpha\in(0,1)$. 
Therefore, since $N=\frac{n+1}{2}$, we deduce that there exists $\tilde{C}(\alpha,\beta)>0$ such that 
$S_N(\alpha,\beta)\leqslant \tilde{C}(\alpha,\beta)(n+1)^{-(\alpha+\beta-1)}$.\\

$\rhd$ If $\alpha>1$, we have $\alpha+\beta-1>\beta$ and $S_N(\alpha,\beta)\leqslant \tilde{C}(\alpha,\beta)(n+1)^{-\beta}$. \\Indeed,
\begin{align*}
S_N(\alpha,\beta)
&=\sum_{k=1}^{N}k^{-\alpha}(2N-(k-1))^{-\beta}\\
&\leqslant (2N-(N-1))^{-\beta}\sum_{k=1}^{N}k^{-\alpha}\\
&\leqslant(N+1)^{-\beta}\sum_{k=1}^{+\infty}k^{-\alpha}.
\end{align*}
Therefore, as before we deduce that there exists $\tilde{C}(\alpha,\beta)>0$ such that 
$S_N(\alpha,\beta)\leqslant \tilde{C}(\alpha,\beta)(n+1)^{-\beta}$.\\

$\rhd$ If $\alpha=1$, in the same way as in the case $\alpha>1$, we get
\begin{align*}
S_N(\alpha,\beta)&\leqslant(N+1)^{-\beta}\sum_{k=1}^{N}\frac{1}{k}\\
&\leqslant \tilde{C}(N+1)^{-\beta}\ln(N)
\end{align*}
Therefore, there exists $\tilde{C}(\alpha,\beta)>0$ such that $S_N(\alpha,\beta)\leqslant \tilde{C}(\alpha,\beta)(n+1)^{-\beta}\ln(n)$.\\

Finally, we get that for all $\alpha>0$ and $\beta>0$ such that $\alpha+\beta>1$,
\begin{equation}
S_N(\alpha,\beta)\leqslant \tilde{C}(\alpha,\beta)\left\{\begin{array}{lll}
(n+1)^{-\min\{\alpha,\beta,\alpha+\beta-1\}} & \text{ if } & \alpha\neq1 \\
(n+1)^{-\beta}\ln(n) & \text{ if }& \alpha=1 \\
\end{array}\right.
\end{equation}
Putting this inequality into \eqref{eq:lem_tec_decomp} we finally get the desired inequality and the proof is finished.

\section{Proof of Hypothesis $(\mathbf{H'_1})$}\label{appendix:H'_1}

We recall that we want to prove that under $(\mathbf{H_{poly}})$, the following hypothesis is true:

\begin{center}
$(\mathbf{H'_1})$: Let $\gamma\in(0,1)$. There exists $C_\gamma>0$ such that for all $j\in\N$, for every $K>0$ and for $i=1,2$,
\[\E\left[\left.\sum_{l=1}^{\Delta\tau_j}\gamma^{\Delta\tau_j-l}|\Delta^i_{\tau_{j-1}+l}|\quad\right|\mathcal{E}_j\right]<C_\gamma\]
where $\Delta\tau_j:=\tau_j-\tau_{j-1}$ and $\Delta^i$ is the stationary Gaussian sequence defined in Equation \eqref{eq:moving_average}.
\end{center}

\begin{rem}\label{rem:second_part_compact_return} 
Since the proof of this assumption will exclusively use the domination assumption on $(a_k)_{k\geqslant0}$ and since $(\tilde{a}_k)_{k\geqslant0}:=(a_{k+1})_{k\geqslant0}$ satisfies the same domination assumption, we will also get that for $i=1,2$,
\[\E\left[\left.\sum_{l=1}^{\Delta\tau_j}\gamma^{\Delta\tau_j-l}|\tilde{\Delta}^i_{\tau_{j-1}+l}|\quad\right|\mathcal{E}_j\right]<C_\gamma\]
where $\tilde{\Delta}^i_{\tau_{j-1}+l}=\sum_{k=0}^{+\infty}a_{k+1}\xi^i_{\tau_{j-1}+l-k}$. Hence, we will get that for $i=1,2$
\[\E\left[\left.|\tilde{\Delta}^i_{\tau_j}|\right|\mathcal{E}_j\right]=\E\left[\left.\left|\sum_{k=0}^{+\infty}a_{k+1}\xi^i_{\tau_j-k}\right|~\right|\mathcal{E}_j\right]< C_\gamma.\]
Then, by the Markov inequality we finally get the second part of Equation \eqref{eq:compact_return1}.
\end{rem}

We now turn to the proof of $(\mathbf{H'_1})$.
We work on the set $\mathcal{E}_j=\{\tau_{j}<+\infty\}$. We have 
\[\sum_{l=1}^{\Delta\tau_j}\gamma^{\Delta\tau_j-l}|\Delta^i_{\tau_{j-1}+l}|=\sum_{u=\tau_{j-1}+1}^{\tau_j}\gamma^{\tau_j-u}|\Delta^i_u|.\]
But,
\[|\Delta^i_u|=\left|\sum_{-\infty}^{k=u}a_{u-k}\xi^i_{k}\right|=\left|\sum_{m=0}^{j}\Lambda^i_m(u)\right|.\]
where
\begin{align}
\Lambda^i_m(u)&=\sum_{k=\tau_{m-1}+1}^{\tau_m}a_{u-k}\xi^i_{k}\quad \text{pour }m\in\{1,\dots,j-1\},
\label{eq:cut_noise1} \\
\Lambda^i_0(u)&=\sum_{-\infty}^{k=\tau_0}a_{u-k}\xi^i_{k}\quad \text{ et }\quad\Lambda^i_{j}(u)=\sum_{k=\tau_{j-1}+1}^{u}a_{u-k}\xi^i_{k}. \label{eq:cut_noise2}
\end{align}
With these notations, we get the following upper-bound 
\begin{equation}\label{ineq1_H'1}
\sum_{l=1}^{\Delta\tau_j}\gamma^{\Delta\tau_j-l}|\Delta^i_{\tau_{j-1}+l}|\leqslant\sum_{m=0}^{j}\sum_{u=\tau_{j-1}+1}^{\tau_j}\gamma^{\tau_j-u}|\Lambda^i_m(u)|.
\end{equation}

The goal of the following lemmas is to get an upper-bound of the quantity $\E[\underset{u\in\llbracket\tau_{j-1}+1,\tau_j\rrbracket}{\sup}|\Lambda^i_m(u)|~|\mathcal{E}_j]$ when $m\in\{0,\dots,j-1\}$.

\begin{lem}\label{lem:IPP_majo} Assume $(\mathbf{H_{poly}})$.
Let $t_0,t_1\in\Z$ and $u\in\N$ such that $t_0<t_1<u$. Let $(\xi_k)_{k\in\Z}$ be a sequence with values in $\R^d$.
Then,
\begin{align*}
\left|\sum_{k=t_0}^{t_1}a_{u-k}\xi_k\right|&\leqslant C_\rho(u+1-t_0)^{-\rho}\left|\sum_{k=t_0}^{t_1}\xi_k\right|+C_\kappa\sum_{k=1}^{t_1-t_0}\left|\sum_{l=k+t_0}^{t_1}\xi_l\right|(u+1-t_0-k)^{-\kappa}\\
&\leqslant C_\rho(u+1-t_0)^{-\rho}\left|\sum_{k=t_0}^{t_1}\xi_k\right|+C_\kappa\sum_{k=1}^{t_1-t_0}\left|\sum_{l=k+t_0}^{t_1}\xi_l\right|(u+1-t_0-k)^{-(\rho+1)}.
\end{align*}

\begin{rem} The last inequality just follows from the fact that $\kappa\geqslant\rho+1$ by assumption.
\end{rem}
\end{lem}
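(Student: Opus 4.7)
The inequality has the flavour of a classical Abel (summation by parts) bound that trades pointwise control of the weights $a_{u-k}$ for control of their discrete differences $a_{u-k}-a_{u-k+1}$, at the cost of replacing single increments $\xi_k$ by their tail sums. So the natural plan is to perform an Abel summation on $\sum_{k=t_0}^{t_1} a_{u-k}\xi_k$ using the tail sums
\[
\tilde S_j:=\sum_{l=j+t_0}^{t_1}\xi_l,\qquad j=0,1,\dots,t_1-t_0+1,
\]
with the convention $\tilde S_{t_1-t_0+1}=0$, so that $\xi_{j+t_0}=\tilde S_j-\tilde S_{j+1}$ for $j=0,\dots,t_1-t_0$.

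Substituting and shifting the index in the second sum produces the standard Abel identity
\[
\sum_{k=t_0}^{t_1} a_{u-k}\xi_k \;=\; a_{u-t_0}\,\tilde S_0 \;+\; \sum_{j=1}^{t_1-t_0}\bigl(a_{u-t_0-j}-a_{u-t_0-j+1}\bigr)\tilde S_j,
\]
since the boundary term at $j=t_1-t_0+1$ vanishes by the convention $\tilde S_{t_1-t_0+1}=0$. From here the proof is purely mechanical: apply the triangle inequality, bound the leading coefficient by $|a_{u-t_0}|\leqslant C_\rho(u+1-t_0)^{-\rho}$ from the first part of $(\mathbf{H_{poly}})$, and bound each discrete increment by
\[
|a_{u-t_0-j}-a_{u-t_0-j+1}|\leqslant C_\kappa(u+1-t_0-j)^{-\kappa}
\]
using the second part of $(\mathbf{H_{poly}})$; here one only has to check that the index $u-t_0-j$ is nonnegative, which follows from $j\leqslant t_1-t_0<u-t_0$. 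This yields the first form of the announced inequality.

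The second form is immediate: since $\kappa\geqslant\rho+1$ by hypothesis and $u+1-t_0-j\geqslant 2>1$ for every $j$ in the sum, one has $(u+1-t_0-j)^{-\kappa}\leqslant(u+1-t_0-j)^{-(\rho+1)}$. I do not anticipate any real obstacle here: the only point one has to be slightly careful about is index bookkeeping so that the Abel rearrangement produces the correct tail sums $\tilde S_j$ rather than partial sums from the left, and so that the hypothesis $|a_k-a_{k+1}|\leqslant C_\kappa(k+1)^{-\kappa}$ is applied at a nonnegative index $k=u-t_0-j$, which is guaranteed by the strict inequality $t_1<u$.
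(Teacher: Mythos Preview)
Your proof is correct and follows essentially the same route as the paper: an Abel summation by parts with the tail sums $\tilde S_j=\sum_{l=j+t_0}^{t_1}\xi_l$, followed by the triangle inequality and the bounds on $|a_k|$ and $|a_k-a_{k+1}|$ from $(\mathbf{H_{poly}})$. Your additional remark that $u+1-t_0-j\geqslant 2$ makes explicit why the final passage from exponent $-\kappa$ to $-(\rho+1)$ is legitimate, which the paper leaves implicit.
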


\begin{proof}
The proof is essentially based on a summation by parts argument. We set
\[\sum_{k=t_0}^{t_1}a_{u-k}\xi_k=\sum_{k=0}^{t_1-t_0}\underbrace{a_{u-t_0-k}}_{=:a'_k}\underbrace{\xi_{t_0+k}}_{=:\xi'_k}\]
and
\[\tilde{B}_k:=\sum_{l=k}^{t_1-t_0}\xi'_l\quad\text{ for }k\in\llbracket0,t_1-t_0\rrbracket.\]
We then have
\begin{align*}
\sum_{k=0}^{t_1-t_0}a'_k\xi'_k&=\sum_{k=0}^{t_1-t_0-1}a'_k(\tilde{B}_k-\tilde{B}_{k+1})+a'_{t_1-t_0}\xi'_{t_1-t_0}\\
&=\sum_{k=0}^{t_1-t_0}a'_k\tilde{B}_k-\sum_{k=1}^{t_1-t_0}a'_{k-1}\tilde{B}_k\\
&=a'_0\tilde{B}_0+\sum_{k=1}^{t_1-t_0}(a'_k-a'_{k-1})\tilde{B}_k\\
\sum_{k=0}^{t_1-t_0}a'_k\xi'_k&=a_{u-t_0}\left(\sum_{k=t_0}^{t_1}\xi_k\right)+\sum_{k=1}^{t_1-t_0}\left(\sum_{l=k+t_0}^{t_1}\xi_l\right)[a_{u-t_0-k}-a_{u-t_0-(k-1)}].
\end{align*}
Finally, by using triangular inequality and $(\mathbf{H_{poly}})$ we deduce that
\[\left|\sum_{k=t_0}^{t_1}a_{u-k}\xi_k\right|\leqslant C_\rho(u+1-t_0)^{-\rho}\left|\sum_{k=t_0}^{t_1}\xi_k\right|+C_\kappa\sum_{k=1}^{t_1-t_0}\left|\sum_{l=k+t_0}^{t_1}\xi_l\right|(u+1-t_0-k)^{-\kappa}.\]
\end{proof}

In the next lemma we adopt the convention $\sum_\emptyset=1$. Moreover, recall that by Proposition \ref{propo:adm1}, we have for every $j\in\N^*$, $\Delta\tau_j\geqslant\varsigma^j$ for an arbitrary $\varsigma>1$.

\begin{lem} \label{lem2_preuve_H'1} Assume $(\mathbf{H_{poly}})$.
We suppose that $\tau_0=0$ and that there exists $\delta_1\in(0,1)$ such that for all $m\geqslant1$ and $K>0$,
$\PP(\mathcal{E}_m|\mathcal{E}_{m-1})\geqslant\delta_1$. Then, for $i=1,2$, for all $p>1$ and for every $\varepsilon\in(0,\rho-1/2)$, there exists $C>0$ such that for all $j\geqslant1$, $m\in\{0,\dots,j-1\}$ and $K>0$,
\begin{equation}\label{eq1:lem2_preuve_H'1}
\E[\underset{u\in\llbracket\tau_{j-1}+1,\tau_j\rrbracket}{\sup}|\Lambda^i_m(u)|~|\mathcal{E}_j]\leqslant C\frac{\left(\sum_{l=m+1}^{j-1}\varsigma^l\right)^{1/2-\rho+\varepsilon}}{\delta_1^{\frac{j-m}{p}}}.
\end{equation}
Consequently, there exist $\eta\in(0,1)$ and $C>0$ such that for all $j\geqslant1$ and $m\in\{0,\dots,j-1\}$,
\begin{equation}\label{eq2:lem2_preuve_H'1}
\E[\underset{u\in\llbracket\tau_{j-1}+1,\tau_j\rrbracket}{\sup}|\Lambda^i_m(u)|~|\mathcal{E}_j]\leqslant C\eta^{~j-m}.
\end{equation}
\end{lem}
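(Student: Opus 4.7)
The idea is to exploit the summation-by-parts bound of Lemma~\ref{lem:IPP_majo} to rewrite $\Lambda^i_m(u)$ as a combination of partial sums of the iid innovations $(\xi^i_k)$, weighted by powers of $u-\tau_{m-1}$ and $u-\tau_{m-1}-k$. The crucial gain is twofold: (i)~those weights are monotone decreasing in $u$, so the supremum over $u\in\llbracket\tau_{j-1}+1,\tau_j\rrbracket$ is attained \emph{deterministically} at the endpoint $u=\tau_{j-1}+1$ and no maximal inequality in $u$ is needed; and (ii)~the weights concentrate at indices of size $\geqslant N_m:=\tau_{j-1}+1-\tau_m$, which by Proposition~\ref{propo:adm1} is bounded below by $\sum_{l=m+1}^{j-1}\varsigma^l$ on $\mathcal{E}_j$.

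First, I would apply Lemma~\ref{lem:IPP_majo} with $t_0=\tau_{m-1}+1$ and $t_1=\tau_m$, introducing the partial Gaussian sums $S_m:=\sum_{k=\tau_{m-1}+1}^{\tau_m}\xi^i_k$ and $S_{m,k}:=\sum_{l=\tau_{m-1}+k+1}^{\tau_m}\xi^i_l$. Evaluating the sup in $u$ at the left endpoint, using $\rho+1>1$ to sum the weights geometrically, and bounding $\max_{k}|S_{m,k}|$ via Doob's $L^p$ maximal inequality applied to the reverse random walk, one obtains, conditionally on the stopping times,
\[
\Bigl\|\sup_{u\in\llbracket\tau_{j-1}+1,\tau_j\rrbracket}|\Lambda^i_m(u)|\Bigr\|_{p}\;\leqslant\;C_p\, N_m^{-\rho}\sqrt{\Delta\tau_m}.
\]

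Next I would take the unconditional $L^p$ norm. By Lemma~\ref{lem:success_proba_step2} the failure index $\ell^*_m$ has a geometric tail, and since by the definition of $\Delta t_3^{(m)}$ (Proposition~\ref{propo:adm1}) $\Delta\tau_m\lesssim\varsigma^m\,2^{(\theta\vee1)\ell^*_m}$, we get $\E[\Delta\tau_m^{p/2}]^{1/p}\leqslant C_p\,\varsigma^{m/2}$ for $p>1$ in a suitable integrability range. The elementary estimate $\varsigma^{m/2}\lesssim\bigl(\sum_{l=m+1}^{j-1}\varsigma^l\bigr)^{1/2}$ (absorbing the boundary case $j=m+1$ and the subpolynomial maximal-inequality loss in a small factor $(\sum_{l=m+1}^{j-1}\varsigma^l)^{\varepsilon}$) combines with $N_m^{-\rho}\leqslant\bigl(\sum_{l=m+1}^{j-1}\varsigma^l\bigr)^{-\rho}$ to yield
\[
\E\bigl[\sup_u|\Lambda^i_m(u)|^p\,\mathds{1}_{\mathcal{E}_j}\bigr]^{1/p}\leqslant C_p\Bigl(\sum_{l=m+1}^{j-1}\varsigma^l\Bigr)^{1/2-\rho+\varepsilon}.
\]

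Conditioning on $\mathcal{E}_j$ is then handled by Hölder's inequality: $\E[X\mid\mathcal{E}_j]\leqslant\|X\mathds{1}_{\mathcal{E}_j}\|_{p}/\PP(\mathcal{E}_j)^{1/p}$. Since the quantity inside the supremum only involves~$\xi^i$ up to time $\tau_m$, one can iterate $\PP(\mathcal{E}_l\mid\mathcal{E}_{l-1})\geqslant\delta_1$ from $m$ to $j$ and only loses the $(j-m)$ factors of $\delta_1$, producing precisely the $\delta_1^{-(j-m)/p}$ factor in \eqref{eq1:lem2_preuve_H'1}. The second inequality \eqref{eq2:lem2_preuve_H'1} then follows from $\sum_{l=m+1}^{j-1}\varsigma^l\geqslant c\,\varsigma^{j-m}$ by choosing $\varepsilon<\rho-1/2$ and $p$ large enough (within the integrability range of $\Delta\tau_m^{p/2}$) so that $\eta:=\varsigma^{1/2-\rho+\varepsilon}\delta_1^{-1/p}<1$, which is possible because $\rho>1/2$ and $\delta_1\in(0,1)$.

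\emph{Main obstacle.} The delicate point is the interplay between the random stopping times $\tau_m$ and the Gaussian moment estimates: one cannot naively condition on the $\tau_l$'s since they depend on both noises $\xi^1,\xi^2$ through the coalescent coupling. The SBP rewriting is precisely what makes this controllable, reducing the problem to bounding the tail of $\Delta\tau_m$ together with a single Gaussian partial sum of the marginally iid sequence $\xi^i$, at the mild cost of the arbitrarily small exponent loss $\varepsilon$ absorbing the polynomial factor $\varsigma^{m/2}$ into the geometric decay.
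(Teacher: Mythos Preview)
Your overall architecture---apply Lemma~\ref{lem:IPP_majo}, extract the deterministic decay from $N_m:=\tau_{j-1}+1-\tau_m\geqslant\sum_{l=m+1}^{j-1}\varsigma^l$, then control Gaussian partial sums---matches the paper's. But two steps do not go through as written.

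\textbf{(a) Conditioning on the stopping times.} You correctly flag in your ``Main obstacle'' paragraph that the $\tau_l$'s depend on $(\xi^1,\xi^2)$, yet earlier you invoke Doob ``conditionally on the stopping times''. Summation by parts alone does not cure this: given $\sigma(\tau_l:l)$, the innovations $\xi^i_k$ for $k\in(\tau_{m-1},\tau_m]$ are no longer i.i.d.\ Gaussian, so neither Doob nor the moment bound $\|S_{m,k}\|_p\leqslant c_p\sqrt{\Delta\tau_m-k}$ is available there. The paper's fix is to first descend from $\mathcal{E}_j$ to $\mathcal{E}_m$ (this is where the factor $\delta_1^{-(j-m)/p}$ appears), then decompose $\mathcal{E}_m=\bigcup_\ell\mathcal{A}_{m,\ell}$ so that $\Delta\tau_m=\Delta(m,\ell)$ is \emph{deterministic} on each piece, and finally use Cauchy--Schwarz to separate the indicator of $\mathcal{A}_{m,\ell}$ from the Gaussian sum, which can then be evaluated under $\mathcal{E}_{m-1}$ (where the post-$\tau_{m-1}$ innovations are genuinely fresh). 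Your sketch is missing this decoupling step.

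\textbf{(b) Doob versus Minkowski, and the boundary case $m=j-1$.} Pulling out $\max_k|S_{m,k}|$ and summing the weights gives the pointwise bound $\sup_u|\Lambda^i_m(u)|\lesssim N_m^{-\rho}\max_k|S_{m,k}|$. For $m=j-1$ one has $N_{j-1}=1$, while (even after fixing the conditioning) $\|\max_k|S_{j-1,k}|\|_p\sim\sqrt{\Delta\tau_{j-1}}\geqslant\varsigma^{(j-1)/2}$; there is nothing to absorb this into since the target bound is a $j$-independent constant (the empty sum convention gives $1$). The paper avoids this by \emph{not} extracting the maximum: it applies Minkowski term by term, so that each $\|S_{m,k}\|_p\leqslant c_p\sqrt{\Delta\tau_m-k}$ recombines with its own weight $(\Delta\tau_m-k)^{-(\rho+1)}$ to produce $\sum_k(\Delta\tau_m-k)^{-(\rho+1/2)}\leqslant C$, uniformly in $\Delta\tau_m$. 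This pairing of the weight with the correct variance is exactly what your Doob step throws away. A secondary consequence is that your route only yields \eqref{eq1:lem2_preuve_H'1} for $p$ below the integrability threshold of $\Delta\tau_m^{p/2}$ (roughly $p<\tilde\alpha/(\theta\vee1)$), whereas the Minkowski argument gives it for all $p>1$ as stated.
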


\begin{proof}
First of all, let us prove that \eqref{eq1:lem2_preuve_H'1} induces \eqref{eq2:lem2_preuve_H'1}.
Let $\alpha_1\in(0,+\infty)$ such that $\varsigma=\delta_1^{-\alpha_1}$. One just have to remark that for $j\geqslant2$ and $m\in\{1,\dots,j-2\}$,
\[\left(\sum_{l=m+1}^{j-1}\varsigma^l\right)^{1/2-\rho+\varepsilon}\delta_1^{\frac{m-j}{p}}\leqslant\delta_1^{(\alpha_1(\rho-1/2-\varepsilon)-1/p)(j-m)}\]
We choose for instance $\varepsilon=\frac{1}{2}(\rho-1/2)$ and $p>\frac{2}{\alpha_1(\rho-1/2)}$ in such a way that \[\alpha_1(\rho-1/2-\varepsilon)-1/p>0.\]
We then deduce \eqref{eq2:lem2_preuve_H'1}.\\
Now, it remains to show \eqref{eq1:lem2_preuve_H'1}. For clarity, we set \[E_j:=\llbracket \tau_{j-1}+1,\tau_j\rrbracket.\] 
Using that for $m\geqslant1$, $\mathcal{E}_{m}\subset\mathcal{E}_{m-1}$ and $\PP(\mathcal{E}_m|\mathcal{E}_{m-1})\geqslant\delta_1\in(0,1)$ and Hölder inequality we deduce the following inequalities, 
\begin{align*}
\E[\underset{u\in E_j}{\sup}|\Lambda^i_m(u)|~|\mathcal{E}_j]&\leqslant \E[\underset{u\in E_j}{\sup}|\Lambda^i_m(u)|^p~|\mathcal{E}_j]^{1/p}
=\left(\E[\underset{u\in E_j}{\sup}|\Lambda^i_m(u)|^p\mathds{1}_{\mathcal{E}_{j}}]\frac{1}{\PP(\mathcal{E}_j)}\right)^{1/p}\\
&\leqslant\left(\E[\underset{u\in E_j}{\sup}|\Lambda^i_m(u)|^p\mathds{1}_{\mathcal{E}_{j-1}}]\frac{1}{\PP(\mathcal{E}_j)}\right)^{1/p}
=\left(\E[\underset{u\in E_j}{\sup}|\Lambda^i_m(u)|^p~|\mathcal{E}_{j-1}]\frac{\PP(\mathcal{E}_{j-1})}{\PP(\mathcal{E}_j)}\right)^{1/p}\\
&\leqslant\delta_1^{-1/p}\E[\underset{u\in E_j}{\sup}|\Lambda^i_m(u)|^p~|\mathcal{E}_{j-1}]^{1/p}\\
&\leqslant(\delta_1^{-1})^{\frac{j-m}{p}}\E[\underset{u\in E_j}{\sup}|\Lambda^i_m(u)|^p~|\mathcal{E}_{m}]^{1/p}\quad \text{ (by induction).}
\end{align*}

It remains to prove the existence of $C$ such that for all $j\geqslant1$, $m\in\{0,\dots,j-1\}$ and $K>0$,
\[
\E[\underset{u\in E_j}{\sup}|\Lambda^i_m(u)|^p~|\mathcal{E}_m]^{1/p}\leqslant C\left(\sum_{l=m+1}^{j-1}\varsigma^l\right)^{1/2-\rho+\varepsilon}
\]
with again the convention $\sum_\emptyset=1$.
We separate the end of the proof into three cases.\\

\textbf{Case 1:} $j\geqslant3$ and $m\in\{1,\dots,j-2\}$.\\
By Lemma \ref{lem:IPP_majo}, applied with $t_0=\tau_{m-1}+1$ and $t_1=\tau_{m}$
\[|\Lambda^i_m(u)|\leqslant C_\rho(u-\tau_{m-1})^{-\rho}\left|\sum_{k=\tau_{m-1}+1}^{\tau_{m}}\xi^i_k\right|+C_\kappa\sum_{k=1}^{\tau_{m}-\tau_{m-1}-1}\left|\sum_{l=k+\tau_{m-1}+1}^{\tau_{m}}\xi^i_l\right|(u-\tau_{m-1}-k)^{-(\rho+1)}.\]
But, $u-\tau_{m-1}\geqslant\tau_{j-1}-\tau_{m-1}\geqslant\sum_{l=m+1}^{j-1}\varsigma^l~$ and $~u-\tau_{m-1}-k\geqslant\tau_{j-1}-\tau_{m}+1\geqslant\sum_{l=m+1}^{j-1}\varsigma^l$.\\ Let $\varepsilon\in(0,\rho-1/2)$, we then have 
\begin{align*}
|\Lambda^i_m(u)|\leqslant\left(\sum_{l=m+1}^{j-1}\varsigma^l\right)^{1/2-\rho+\varepsilon}
&\left[C_\rho(u-\tau_{m-1})^{-(1/2+\varepsilon)}\left|\sum_{k=\tau_{m-1}+1}^{\tau_{m}}\xi^i_k\right|\right.\\&\left.\quad+C_\kappa\sum_{k=1}^{\tau_{m}-\tau_{m-1}-1}\left|\sum_{l=k+\tau_{m-1}+1}^{\tau_{m}}\xi^i_l\right|(u-\tau_{m-1}-k)^{-(3/2+\varepsilon)}\right].
\end{align*}
We denote by $\tilde{\Lambda}^i_m(u)$ the above quantity between brackets. Hence
\[\E[\underset{u\in E_j}{\sup}|\Lambda^i_m(u)|^p~|\mathcal{E}_m]^{1/p}\leqslant\left(\sum_{l=m+1}^{j-1}\varsigma^l\right)^{1/2-\rho+\varepsilon}\E[\underset{u\in E_j}{\sup}|\tilde{\Lambda}^i_m(u)|^p~|\mathcal{E}_m]^{1/p}.\]
We now have to prove the existence of $C$ such that
\[\E[\underset{u\in E_j}{\sup}|\tilde{\Lambda}^i_m(u)|^p~|\mathcal{E}_m]^{1/p}\leqslant C\quad\text{ for all }p\in(1,+\infty).\]
We write $\mathcal{E}_m=\cup_{\ell\geqslant0}\mathcal{A}_{m,\ell}$ with 
\begin{equation}\label{event_fail_at_trial_ell}
\mathcal{A}_{m,\ell}=\mathcal{B}^c_{m,\ell}\cap\mathcal{B}_{m,\ell-1}
\end{equation} where $\mathcal{B}_{m,\ell}$ is defined in Equation \eqref{event_fail_after_l_attempts}. In other words, $\mathcal{A}_{m,0}$ is the failure of Step 1 of tentative $m$ and for $\ell\geqslant1$, $\mathcal{A}_{m,\ell}$ is the event ``Step 2 of trial $m$ fails after exactly $\ell$ attempts''. \\
Let $\ell\in\N$, we begin by studying $\E[\underset{u\in E_j}{\sup}|\tilde{\Lambda}^i_m(u)|^p~|\mathcal{A}_{m,\ell}]^{1/p}$. Since $u>\tau_m$, 
\[|\tilde{\Lambda}^i_m(u)|\leqslant C_\rho(\Delta\tau_m)^{-(1/2+\varepsilon)}\left|\sum_{k=\tau_{m-1}+1}^{\tau_{m-1}+\Delta\tau_m}\xi^i_k\right|+C_\kappa\sum_{k=1}^{\Delta\tau_{m}-1}\left|\sum_{l=k+\tau_{m-1}+1}^{\tau_{m-1}+\Delta\tau_m}\xi^i_l\right|(\Delta\tau_m-k)^{-(3/2+\varepsilon)}.\]
By Minkowski inequality and the fact that $\Delta\tau_m=:\Delta(m,\ell)$ is constant on $\mathcal{A}_{m,\ell}$, we get
\begin{align}\label{ineq1'_H'1}
\E[\underset{u\in E_j}{\sup}|\tilde{\Lambda}^i_m(u)|^p~|\mathcal{A}_{m,\ell}]^{1/p}\leqslant C_\rho(&\Delta(m,\ell))^{-(1/2+\varepsilon)}\E\left[\left|\sum_{k=\tau_{m-1}+1}^{\tau_{m-1}+\Delta(m,\ell)}\xi^i_k\right|^p~|\mathcal{A}_{m,\ell}\right]^{1/p}\nonumber\\ &+C_\kappa\sum_{k=1}^{\Delta(m,\ell)-1}(\Delta(m,\ell)-k)^{-(3/2+\varepsilon)}\E\left[\left|\sum_{l=k+\tau_{m-1}+1}^{\tau_{m-1}+\Delta(m,\ell)}\xi^i_l\right|^p~|\mathcal{A}_{m,\ell}\right]^{1/p}.
\end{align}
Moreover, using Cauchy-Schwarz inequality,
\begin{align}\label{term1}
\E\left[\left|\sum_{k=\tau_{m-1}+1}^{\tau_{m-1}+\Delta(m,\ell)}\xi^i_k\right|^p~|\mathcal{A}_{m,\ell}\right]^{1/p}&=\E\left[\left|\sum_{k=\tau_{m-1}+1}^{\tau_{m-1}+\Delta(m,\ell)}\xi^i_k\right|^p\mathds{1}_{\mathcal{A}_{m,\ell}}~|\mathcal{E}_{m-1}\right]^{1/p}\PP(\mathcal{A}_{m,\ell}|\mathcal{E}_{m-1})^{-1/p}\nonumber\\
&\leqslant\E\left[\left(\sum_{k=\tau_{m-1}+1}^{\tau_{m-1}+\Delta(m,\ell)}\xi^i_k\right)^{2p}~|\mathcal{E}_{m-1}\right]^{1/2p}\PP(\mathcal{A}_{m,\ell}|\mathcal{E}_{m-1})^{-1/2p}\nonumber\\
&\leqslant c_p\sqrt{\Delta(m,\ell)}~\PP(\mathcal{A}_{m,\ell}|\mathcal{E}_{m-1})^{-1/2p}.
\end{align}
In the last inequality we use the fact that $\sum_{k=\tau_{m-1}+1}^{\tau_{m-1}+\Delta(m,\ell)}\xi^i_k$ is independent from $\mathcal{E}_{m-1}$ and that its law is $\mathcal{N}(0,\Delta(m,\ell))$.
In the same way, we obtain 
\begin{equation}\label{term2}
\E\left[\left|\sum_{l=k+\tau_{m-1}+1}^{\tau_{m-1}+\Delta(m,\ell)}\xi^i_l\right|^p~|\mathcal{A}_{m,\ell}\right]^{1/p}\leqslant c_p\sqrt{\Delta(m,\ell)-k}~\PP(\mathcal{A}_{m,\ell}|\mathcal{E}_{m-1})^{-1/2p}.
\end{equation}
We deduce from \eqref{term1} and \eqref{term2} that in \eqref{ineq1'_H'1}
\begin{align}\label{ineq2_H'1}
\E[\underset{u\in E_j}{\sup}|\tilde{\Lambda}^i_m(u)|^p~|\mathcal{A}_{m,\ell}]^{1/p}&\leqslant\left(c_pC_\rho(\Delta(m,\ell))^{-\varepsilon}+c_pC_\kappa\sum_{k=1}^{\Delta(m,\ell)-1}(\Delta(m,\ell)-k)^{-(1+\varepsilon)}\right)\PP(\mathcal{A}_{m,\ell}|\mathcal{E}_{m-1})^{-1/2p}\nonumber\\
&\leqslant C_{p,\varepsilon}\PP(\mathcal{A}_{m,\ell}|\mathcal{E}_{m-1})^{-1/2p}.
\end{align}
Then by using the inequality $(a+b)^{1/p}\leqslant a^{1/p}+b^{1/p}$ for $p>1$ and \eqref{ineq2_H'1} for $\ell$ from $0$ to $+\infty$, we get
\begin{align*}
\E[\underset{u\in E_j}{\sup}|\tilde{\Lambda}^i_m(u)|^p~|\mathcal{E}_{m}]^{1/p}
&=\left(\sum_{\ell\geqslant0}\E[\underset{u\in E_j}{\sup}|\tilde{\Lambda}^i_m(u)|^p~|\mathcal{A}_{m,\ell}]\PP(\mathcal{A}_{m,\ell}|\mathcal{E}_{m})\right)^{1/p}\\
&\leqslant \sum_{\ell\geqslant0}\E[\underset{u\in E_j}{\sup}|\tilde{\Lambda}^i_m(u)|^p~|\mathcal{A}_{m,\ell}]^{1/p}\PP(\mathcal{A}_{m,\ell}|\mathcal{E}_{m})^{1/p}\\
&\leqslant C_{p,\varepsilon}\sum_{\ell\geqslant0}\PP(\mathcal{A}_{m,\ell}|\mathcal{E}_{m-1})^{-1/2p}\left(\frac{\PP(\mathcal{A}_{m,\ell}|\mathcal{E}_{m-1})}{\PP(\mathcal{E}_{m}|\mathcal{E}_{m-1})}\right)^{1/p}\\
&\leqslant C_{p,\varepsilon}\delta_1^{-1/p}\sum_{\ell\geqslant0}\PP(\mathcal{A}_{m,\ell}|\mathcal{E}_{m-1})^{1/2p}.
\end{align*}
But $\mathcal{A}_{m,\ell}\subset\mathcal{E}_m\subset\mathcal{E}_{m-1}$ hence for $\ell>0$, \[\PP(\mathcal{A}_{m,\ell}|\mathcal{E}_{m-1})=\PP(\mathcal{B}_{m,\ell-1}|\mathcal{E}_{m-1})\PP(\mathcal{B}^c_{m,\ell}|\mathcal{B}_{m,\ell-1})\leqslant\PP(\mathcal{B}^c_{m,\ell}|\mathcal{B}_{m,\ell-1}),\] 
Therefore, for all $\varepsilon\in(0,\rho-1/2)$ and $p\in(1,+\infty)$, by applying Lemma \ref{lem:success_proba_step2}, this gives us the existence of $C$ such that
\[\E[\underset{u\in E_j}{\sup}|\tilde{\Lambda}^i_m(u)|^p~|\mathcal{E}_{m}]^{1/p}\leqslant C_{p,\varepsilon}\delta_1^{-1/p}(2+\sum_{\ell\geqslant2}2^{-\tilde{\alpha}\ell/2p}) <C.\]
The first case is now achieved.\\

\textbf{Case 2:} Let $j\geqslant2$ and $m=j-1$.\\
The proof is almost exactly the same as in case 1. We simply use the following controls 
\[u-\tau_{j-2}>\Delta\tau_{j-1}\quad\text{ and }\quad u-\tau_{j-2}-k>\Delta\tau_{j-1}-k\]
and we do not introduce $\varepsilon$ which is useless here since $\sum_{l=m+1}^{j-1}=\sum_\emptyset$.\\

\textbf{Case 3:} Let $j\geqslant1$ and $m=0$. By assumption $\tau_0=0$, then 
\[\Lambda^i_0(u)=\sum_{-\infty}^{k=0}a_{u-k}\xi^i_k.\]
By Lemma \ref{lem:IPP_majo}, for all $M>0$,
\begin{align*}
|\sum_{-M}^{k=0}a_{u-k}\xi^i_k|&\leqslant C_\rho(u+1+M)^{-\rho}\left|\sum_{k=-M}^{0}\xi^i_k\right|+C_\kappa\sum_{k=1}^{M}\left|\sum_{l=k-M}^{0}\xi^i_l\right|(u+1+M-k)^{-(\rho+1)}\\
&\leqslant C_\rho(1+M)^{-\rho}\left|\sum_{k=-M}^{0}\xi^i_k\right|+C_\kappa\sum_{k=1}^{M}\left|\sum_{l=k-M}^{0}\xi^i_l\right|(M+1-k)^{-(\rho+1)}\\
&=C_\rho(1+M)^{-\rho}\left|\sum_{k=-M}^{0}\xi^i_k\right|+C_\kappa\sum_{k=0}^{M-1}\left|\sum_{l=-k}^{0}\xi^i_l\right|(k+1)^{-(\rho+1)}\\
&=C_\rho(1+M)^{-\rho}\left|\sum_{k=0}^{M}\xi^i_{-k}\right|+C_\kappa\sum_{k=0}^{M-1}\left|\sum_{l=0}^{k}\xi^i_{-l}\right|(k+1)^{-(\rho+1)}.
\end{align*}
Since $\rho>1/2$, by means of Borel-Cantelli Lemma and the fact that $\sum_{k=0}^{M}\xi^i_{-k}\sim\mathcal{N}(0,M+1)$, we can show that $\lim\limits_{M\to+\infty}(1+M)^{-\rho}\left(\sum_{k=0}^{M}\xi^i_{-k}\right)=0$ a.s.\\
We then get
\[|\Lambda^i_0(u)|\leqslant C_\kappa\sum_{k=0}^{+\infty}\left|\sum_{l=0}^{k}\xi^i_{-l}\right|(k+1)^{-(\rho+1)}.\]
Set $W^i_{k}=\sum_{l=0}^{k-1}\xi^i_{-l}$ for $k>0$ and $W^i_0=\xi^i_0$. Using Minkowski inequality, we have for all $p\in(1,+\infty)$ and for all $\varepsilon\in(0,\rho-1/2)$
\begin{align*}
\E[\underset{u\in E_j}{\sup}|\Lambda^i_0(u)|^p~|\mathcal{E}_{0}]^{1/p}=\E[\underset{u\in E_j}{\sup}|\Lambda^i_0(u)|^p]^{1/p}&\leqslant
C_\kappa\sum_{k=0}^{+\infty}(k+1)^{-(\rho+1/2-\varepsilon)}\E\left[\left(\frac{|W^i_{k+1}|}{(k+1)^{1/2+\varepsilon}}\right)^p~\right]^{1/p}\\
&\leqslant C_\kappa\E\left[\left(\sup_{k\geqslant0}\frac{|W^i_{k+1}|}{(k+1)^{1/2+\varepsilon}}\right)^p~\right]^{\frac{1}{p}}\sum_{k=0}^{+\infty}(k+1)^{-(\rho+1/2-\varepsilon)}\\
&\leqslant C \E\left[\left(\sup_{k\geqslant0}\frac{|W^i_{k+1}|}{(k+1)^{1/2+\varepsilon}}\right)^p~\right]^{\frac{1}{p}}
\end{align*}
because $\rho+1/2-\varepsilon>1$.
It remains to prove that for $\varepsilon\in(0,\rho-1/2)$ and for $p\in(1,+\infty)$
\[\E\left[\left(\sup_{k\geqslant0}\frac{|W^i_{k+1}|}{(k+1)^{1/2+\varepsilon}}\right)^p~\right]^{\frac{1}{p}}<+\infty.\]
Thanks to a summation by parts, we can show that
\[\frac{W^i_{k+1}}{(k+1)^{1/2+\varepsilon}}=\sum_{l=0}^k\frac{\xi^i_{-l}}{(l+1)^{1/2+\varepsilon}}+\sum_{l=1}^k W^i_l\left(\frac{1}{(l+1)^{1/2+\varepsilon}}-\frac{1}{l^{1/2+\varepsilon}}\right).\]
Hence, using that $\left|\frac{1}{(l+1)^{1/2+\varepsilon}}-\frac{1}{l^{1/2+\varepsilon}}\right|\leqslant(1/2+\varepsilon)\frac{1}{l^{3/2+\varepsilon}}$, we get
\[\frac{|W^i_{k+1}|}{(k+1)^{1/2+\varepsilon}}\leqslant\left|\sum_{l=0}^k\frac{\xi^i_{-l}}{(l+1)^{1/2+\varepsilon}}\right|+(1/2+\varepsilon)\sum_{l=1}^k \frac{|W^i_l|}{l^{3/2+\varepsilon}}.\]
Therefore
\[\sup_{k\geqslant0}\frac{|W^i_{k+1}|}{(k+1)^{1/2+\varepsilon}}\leqslant\sup_{k\geqslant0}\left|\sum_{l=0}^k\frac{\xi^i_{-l}}{(l+1)^{1/2+\varepsilon}}\right|+(1/2+\varepsilon)\sum_{l=1}^{+\infty} \frac{|W^i_l|}{l^{3/2+\varepsilon}}.\]
We use again Minkowski inequality, which gives
\[\left\Vert\sup_{k\geqslant0}\frac{|W^i_{k+1}|}{(k+1)^{1/2+\varepsilon}}\right\Vert_p\leqslant
\left\Vert\sup_{k\geqslant0}\left|\sum_{l=0}^k\frac{\xi^i_{-l}}{(l+1)^{1/2+\varepsilon}}\right|~\right\Vert_p+(1/2+\varepsilon)\sum_{l=1}^{+\infty} \frac{\|W^i_l\|_p}{l^{3/2+\varepsilon}}.\]
On the one hand, we have $\|W^i_l\|_p\leqslant c_p\sqrt{l}$ because $W^i_l\sim\mathcal{N}(0,l)$. On the other hand, set $N_k:=\sum_{l=0}^{k}\frac{\xi^i_{-l}}{(l+1)^{1/2+\varepsilon}}$. This is a martingale with distribution
$~\mathcal{N}\left(0,\sum_{l=0}^k(l+1)^{-(1+2\varepsilon)}\right)$ therefore $\|N_k\|_p\leqslant c_p$ with $c_p$ independent from $k$. Hence, $(N_k)_{k\in\N}$ converges a.s. and in $L^p$ into $N_\infty\in L^p$.
We then deduce by Doob's inequality that 
\[\|\sup_{k\geqslant0}|N_k|~\|_p\leqslant\left(\frac{p-1}{p}\right)\|N_\infty\|_p.\]
Finalement,
\[\left\Vert\sup_{k\geqslant0}\frac{|W^i_{k+1}|}{(k+1)^{1/2+\varepsilon}}\right\Vert_p<+\infty.\]
which achieves the third case.
\end{proof}

\begin{propo}\label{propo:proof_H'1}
Assume $(\mathbf{H_{poly}})$. We suppose that $\tau_0=0$ and that for all $m\geqslant1$ and $K>0$,
$\PP(\mathcal{E}_m|\mathcal{E}_{m-1})\geqslant\delta_1\in(0,1)$. Then $(\mathbf{H'_1})$ holds true.
\end{propo}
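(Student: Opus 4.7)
The plan is to bound the quantity on the left-hand side of $(\mathbf{H'_1})$ by combining the trial-by-trial decomposition with Lemma \ref{lem2_preuve_H'1}. Starting from inequality \eqref{ineq1_H'1} together with the cut \eqref{eq:cut_noise1}--\eqref{eq:cut_noise2}, I would take the conditional expectation given $\mathcal{E}_j$, interchange sums, and use the uniform estimate
\[\sum_{u=\tau_{j-1}+1}^{\tau_j}\gamma^{\tau_j-u}\leqslant\frac{1}{1-\gamma}\]
to reduce the problem to controlling $\sum_{m=0}^{j}\E\bigl[\sup_{u\in E_j}|\Lambda^i_m(u)|\,\big|\,\mathcal{E}_j\bigr]$, where $E_j:=\llbracket\tau_{j-1}+1,\tau_j\rrbracket$.

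For the ``past-trial'' indices $m\in\{0,1,\ldots,j-1\}$, the second conclusion \eqref{eq2:lem2_preuve_H'1} of Lemma \ref{lem2_preuve_H'1} is exactly what is needed: it produces a geometric bound $C\eta^{j-m}$ with $\eta\in(0,1)$, so the contribution of these terms is bounded by $C\eta/(1-\eta)$, uniformly in $j$ and $K$.

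The boundary term $m=j$ is the main obstacle, since $\Lambda^i_j(u)=\sum_{k=\tau_{j-1}+1}^{u}a_{u-k}\xi^i_k$ involves the innovations generated during the current trial and Lemma \ref{lem2_preuve_H'1} does not apply directly. I would treat it by adapting Case~1 of the proof of Lemma \ref{lem2_preuve_H'1}: apply Lemma \ref{lem:IPP_majo} with $t_0=\tau_{j-1}+1$ and $t_1=u-1$ (the diagonal term $a_0\xi^i_u$ is harmless since $\|\xi^i_u\|_p$ is finite), then decompose $\mathcal{E}_j=\bigsqcup_{\ell\geqslant 0}\mathcal{A}_{j,\ell}$ as in \eqref{event_fail_at_trial_ell}. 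On each $\mathcal{A}_{j,\ell}$ the duration $\Delta\tau_j$ is deterministic, so partial sums of $\xi^i$ up to $\tau_j$ remain Gaussian under conditioning on $\mathcal{E}_{j-1}$, and the Hölder/Minkowski computations leading to \eqref{ineq2_H'1} go through with the same $\delta_1^{-1/p}$ absorption. The decay $\PP(\mathcal{A}_{j,\ell}\mid\mathcal{E}_{j-1})\leqslant 2^{-\tilde{\alpha}\ell}$ from Lemma \ref{lem:success_proba_step2} makes $\sum_{\ell\geqslant 0}\PP(\mathcal{A}_{j,\ell}\mid\mathcal{E}_{j-1})^{1/2p}$ summable, and the weight $\gamma^{\tau_j-u}$ makes the sum over $u\in E_j$ finite irrespective of $\Delta\tau_j$. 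Crucially, only one trial is involved in this step, so no power of $\delta_1^{-1}$ growing with $j$ is accumulated.

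Adding the two contributions produces a constant $C_\gamma$ depending on $\gamma$, $\delta_1$, $\tilde{\alpha}$, and the constants appearing in $(\mathbf{H_{poly}})$, but independent of $j$ and $K$, which is exactly $(\mathbf{H'_1})$. The subtle point in the $m=j$ case is to verify that after conditioning on $\mathcal{A}_{j,\ell}$ the $L^p$ norms of the centered Gaussian partial sums remain controlled by $c_p\sqrt{\Delta(j,\ell)}\,\PP(\mathcal{A}_{j,\ell}\mid\mathcal{E}_{j-1})^{-1/2p}$, and then to select $p$ large enough—exactly as in the proof of Lemma \ref{lem2_preuve_H'1}—so that the product against the IPP weight $(\Delta(j,\ell))^{-\varepsilon}$ and the geometric factor $\gamma^{\tau_j-u}$ remains summable both in $u$ and in $\ell$.
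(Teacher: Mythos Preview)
Your overall strategy coincides with the paper's: split the sum over trials into the past contributions $m\in\{0,\dots,j-1\}$ (handled by the geometric bound \eqref{eq2:lem2_preuve_H'1} of Lemma~\ref{lem2_preuve_H'1}) and the current trial $m=j$ (handled separately via the IPP Lemma~\ref{lem:IPP_majo} and the decomposition $\mathcal{E}_j=\bigsqcup_\ell\mathcal{A}_{j,\ell}$). For $m\leqslant j-1$ your argument is exactly the paper's.

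For $m=j$, however, your sketch imports details from Case~1 of Lemma~\ref{lem2_preuve_H'1} that do not transfer. The ``IPP weight $(\Delta(j,\ell))^{-\varepsilon}$'' you invoke does not arise here: in Case~1 it came from $(u-\tau_{m-1})^{-(1/2+\varepsilon)}\leqslant(\Delta\tau_m)^{-(1/2+\varepsilon)}$ using $u>\tau_m$, but for $m=j$ one has $u\leqslant\tau_j$, so no such inequality holds. More seriously, your crude bound $c_p\sqrt{\Delta(j,\ell)}$ on the partial sums leaves a factor $\sqrt{\Delta(j,\ell)}\gtrsim\sqrt{t_*\varsigma^j\,2^{\theta\ell}}$ that is not compensated by any available weight: summability in $\ell$ against $\PP(\mathcal{A}_{j,\ell}\mid\mathcal{E}_{j-1})^{1/2p}$ would require $p<\tilde\alpha/(\theta\vee1)$ (not $p$ large), and even then the residual $\varsigma^{j/2}$ destroys uniformity in~$j$. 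The paper avoids this by keeping the $\gamma$-weighted sum over $v:=u-\tau_{j-1}$ and using the \emph{sharp} Gaussian bound for each partial sum (length $v-k$, not $\Delta(j,\ell)$), which gives
\[
\E\bigl[|\Lambda^i_j(\tau_{j-1}+v)|^4\,\big|\,\mathcal{E}_{j-1}\bigr]^{1/4}\leqslant c_4\Bigl(C_\rho\,v^{1/2-\rho}+C_\kappa\sum_{k=1}^{v-1}(v-k)^{-(\rho+1/2)}\Bigr)\leqslant C
\]
uniformly in $v$, since $\rho>1/2$. This uniform bound, the finite geometric sum in $v$, and summability of $\sum_\ell\PP(\mathcal{A}_{j,\ell}\mid\mathcal{E}_j)^{1/2}$ then close the argument with no need to tune~$p$. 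Your initial reduction to $\sum_m\E[\sup_{u}|\Lambda^i_m(u)|\mid\mathcal{E}_j]$ should accordingly be read as applying only to $m\leqslant j-1$; for $m=j$ retain the weighted sum, as you yourself hint later.
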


\begin{proof}
First, thanks to \eqref{ineq1_H'1}, we have 
\begin{equation*}
\sum_{l=1}^{\Delta\tau_j}\gamma^{\Delta\tau_j-l}|\Delta^i_{\tau_{j-1}+l}|\leqslant\sum_{m=0}^{j}\sum_{u=\tau_{j-1}+1}^{\tau_j}\gamma^{\tau_j-u}|\Lambda^i_m(u)|.
\end{equation*}
The aim is to bound every term in the right-hand side. For $m\in\{0,\dots,j-1\}$ and for all 
$u\in E_j:=\llbracket\tau_{j-1}+1,\tau_j\rrbracket$,  
\[|\Lambda^i_m(u)|\leqslant \underset{u\in E_j}{\sup}|\Lambda^i_m(u)|.\]
Since the right-hand side does not depend on $u$ anymore, we deduce that for all $m\in\{0,\dots,j-1\}$
\[\sum_{u=\tau_{j-1}+1}^{\tau_j}\gamma^{\tau_j-u}|\Lambda^i_m(u)|\leqslant\underset{u\in E_j}{\sup}|\Lambda^i_m(u)|\sum_{w=0}^{+\infty}\gamma^{w}=\underset{u\in E_j}{\sup}|\Lambda^i_m(u)|\frac{1}{1-\gamma}.\]
Hence, Lemma \ref{lem2_preuve_H'1}, gives that for all $m\in\{0,\dots,j-1\}$
\[\E\left[\left.\sum_{u=\tau_{j-1}+1}^{\tau_j}\gamma^{\tau_j-u}|\Lambda^i_m(u)|~\right|\mathcal{E}_j\right]\leqslant
\frac{C}{1-\gamma}\eta^{~j-m}\]
where $\eta\in(0,1)$. Consequently,
\begin{equation}\label{ineq3_H'1}
\E\left[\left.\sum_{m=0}^{j-1}\sum_{u=\tau_{j-1}+1}^{\tau_j}\gamma^{\tau_j-u}|\Lambda^i_m(u)|~\right|\mathcal{E}_j\right]\leqslant
\frac{C}{(1-\gamma)(1-\eta)}.
\end{equation}
In inequality \eqref{ineq1_H'1}, it then remains to bound the term with $\Lambda^i_j(u)$.
By substitution, we obtain for $m=j$
\[\sum_{u=\tau_{j-1}+1}^{\tau_j}\gamma^{\tau_j-u}|\Lambda^i_j(u)|=\sum_{v=1}^{\Delta\tau_j}\gamma^{\Delta\tau_j-v}|\Lambda^i_j(v+\tau_{j-1})|.\]
As in the proof of Lemma \ref{lem2_preuve_H'1}, we use the decomposition of $\mathcal{E}_j$ through the events $\mathcal{A}_{j,\ell}$ and that $\Delta\tau_j=:\Delta(j,\ell)$ is constant on $\mathcal{A}_{j,\ell}$:
\begin{equation}\label{eq3_preuve_H'1}
\E[\sum_{u=\tau_{j-1}+1}^{\tau_j}\gamma^{\tau_j-u}|\Lambda^i_j(u)|~|\mathcal{E}_j]=\sum_{\ell\geqslant0}\sum_{v=1}^{\Delta(j,\ell)}\gamma^{\Delta(j,\ell)-v}\E[|\Lambda^i_j(v+\tau_{j-1})|~|\mathcal{A}_{j,\ell}]\PP(\mathcal{A}_{j,\ell}|\mathcal{E}_j).
\end{equation}
Using that $\mathcal{A}_{j,\ell}\subset\mathcal{E}_j\subset\mathcal{E}_{j-1}$ and Cauchy-Schwarz inequality, one notes that 
\begin{align}\label{ineq3'_H'1}
\E[|\Lambda^i_j(v+\tau_{j-1})|~|\mathcal{A}_{j,\ell}]\PP(\mathcal{A}_{j,\ell}|\mathcal{E}_j)&\leqslant \E[|\Lambda^i_j(v+\tau_{j-1})|^2~|\mathcal{E}_j]^{1/2}\PP(\mathcal{A}_{j,\ell}|\mathcal{E}_j)^{1/2}\nonumber\\
&\leqslant\frac{\E[|\Lambda^i_j(v+\tau_{j-1})|^4~|\mathcal{E}_{j-1}]^{1/4}}{\PP(\mathcal{E}_{j}|\mathcal{E}_{j-1})^{1/4}}\PP(\mathcal{A}_{j,\ell}|\mathcal{E}_j)^{1/2}\nonumber\\
&\leqslant\frac{\underset{v\in\N^*}{\sup}\E[|\Lambda^i_j(v+\tau_{j-1})|^4~|\mathcal{E}_{j-1}]^{1/4}}{\PP(\mathcal{E}_{j}|\mathcal{E}_{j-1})^{1/4}}\PP(\mathcal{A}_{j,\ell}|\mathcal{E}_j)^{1/2}.
\end{align}
But $\PP(\mathcal{E}_j|\mathcal{E}_{j-1})\geqslant\delta_1>0$ and by Lemma \ref{lem:success_proba_step2}, we have for all $\ell\geqslant2$,
\begin{equation}\label{ineq3''_H'1}
\PP(\mathcal{A}_{j,\ell}|\mathcal{E}_j)=\frac{\PP(\mathcal{A}_{j,\ell}|\mathcal{E}_{j-1})}{\PP(\mathcal{E}_j|\mathcal{E}_{j-1})}\leqslant\delta_1^{-1}\PP(\mathcal{B}_{j,\ell-1}|\mathcal{E}_{j-1})\PP(\mathcal{B}^c_{j,\ell}|\mathcal{B}_{j,\ell-1})\leqslant\delta_1^{-1}2^{-\tilde{\alpha}\ell}.
\end{equation}

We now use \eqref{ineq3'_H'1} and \eqref{ineq3''_H'1} into \eqref{eq3_preuve_H'1} and this gives the existence of $C_{\delta_1,\gamma}$ such that
\begin{equation}\label{ineq4_H'1}
\E[\sum_{u=\tau_{j-1}+1}^{\tau_j}\gamma^{\tau_j-u}|\Lambda^i_j(u)|~|\mathcal{E}_j]\leqslant C_{\delta_1,\gamma}\underset{v\in\N^*}{\sup}\E[|\Lambda^i_j(v+\tau_{j-1})|^4~|\mathcal{E}_{j-1}]^{1/4}.
\end{equation}
It only remains to show that
\[\underset{v\in\N^*}{\sup}\E[|\Lambda^i_j(v+\tau_{j-1})|^4~|\mathcal{E}_{j-1}]^{1/4}<+\infty.\]
By Lemma \ref{lem:IPP_majo} and the definition of $\Lambda^i_j$ in \eqref{eq:cut_noise2},
\begin{align*}
|\Lambda^i_j(v+\tau_{j-1})|&=\left|\sum_{k=\tau_{j-1}+1}^{v+\tau_{j-1}}a_{v+\tau_{j-1}-k}\xi^i_k\right|\\
&\leqslant C_\rho v^{-\rho}\left|\sum_{k=\tau_{j-1}+1}^{v+\tau_{j-1}}\xi^i_k\right|+C_\kappa\sum_{k=1}^{v-1}\left|\sum_{l=k+\tau_{j-1}+1}^{v+\tau_{j-1}}\xi^i_l\right|(v-k)^{-(\rho+1)}.
\end{align*}
We again apply Minkowski inequality
\begin{align*}
\E[|\Lambda^i_j(v+&\tau_{j-1})|^4~|\mathcal{E}_{j-1}]^{1/4}\\
&\leqslant C_\rho v^{-\rho}\E\left[\left.\left|\sum_{k=\tau_{j-1}+1}^{v+\tau_{j-1}}\xi^i_k\right|^4~\right|\mathcal{E}_{j-1}\right]^{1/4}+C_\kappa\sum_{k=1}^{v-1}(v-k)^{-(\rho+1)}\E\left[\left.\left|\sum_{l=k+\tau_{j-1}+1}^{v+\tau_{j-1}}\xi^i_l\right|^4~\right|\mathcal{E}_{j-1}\right]^{1/4}\\
&=C_\rho v^{-\rho}\E\left[\left|\sum_{k=1}^{v}\xi^i_k\right|^4\right]^{1/4}+C_\kappa\sum_{k=1}^{v-1}(v-k)^{-(\rho+1)}\E\left[\left|\sum_{l=k+1}^{v}\xi^i_l\right|^4\right]^{1/4}\\
&\leqslant c_4\left(C_\rho v^{-\rho+1/2}+C_\kappa\sum_{k=1}^{v-1}(v-k)^{-(\rho+1/2)}\right)
\end{align*}
where $c_4$ is related to the $4^{th}$ moment of a centered and reduced Gaussian random variable.
Since $\rho+1/2>1$, we immediately deduce that 
\begin{equation}\label{ineq5_H'1}
\underset{v\in\N^*}{\sup}\E[|\Lambda^i_j(v+\tau_{j-1})|^4~|\mathcal{E}_{j-1}]^{1/4}<+\infty.
\end{equation}
We put together \eqref{ineq3_H'1},\eqref{ineq4_H'1} and \eqref{ineq5_H'1} to conclude the proof of $(\mathbf{H'_1})$.
\end{proof}
\bibliographystyle{plain}
{\small
\bibliography{biblio_article}}

\end{document}